\documentclass[11pt,a4paper]{article}
\usepackage{}
\usepackage{bbding}
\usepackage{mathrsfs}
\usepackage{amssymb}
\usepackage{amsfonts}
\usepackage{amsmath}
\usepackage{cases}
\usepackage{amsthm}
\usepackage[square, comma, sort&compress, numbers]{natbib}

\setlength{\textwidth 16cm}
\setlength{\textheight 21cm}
\usepackage[top=1in,bottom=1in,left=1.20in,right=1.20in]{geometry}

\newtheorem{theorem}{Theorem}[section]
\newtheorem{lemma}{Lemma}[section]
\newtheorem{definition}{Definition}[section]
\newtheorem{proposition}[lemma]{Proposition}
\newtheorem{remark}{Remark}[section]

\numberwithin{equation}{section}
\theoremstyle{plain}

\newcommand{\me}{\mathrm{e}}
\newcommand{\mi}{\mathrm{i}}
\newcommand{\dif}{\mathrm{d}}

\DeclareMathOperator{\meas}{meas}
\DeclareMathOperator{\card}{card}

\begin{document}
\title{Quasi-periodic Solutions of a Derivative Nonlinear Schr\"{o}dinger Equation}
\author{Jie Liu\footnote{The work was
supported by the NNSF of China (Grant No. 11171185) and the NSF of Shandong Province (Grant No. ZR2010AM013).}\\
School of  Mathematics, Shandong University,
Jinan 250100, P.R. China.\\
E-mail: jzyzliujie@gmail.com.}
\date{}
\maketitle
\begin{abstract}
This paper is concerned with a one dimensional (1D) derivative nonlinear Schr\"{o}dinger equation
with periodic boundary conditions
\begin{equation*}
       \mi u_t+u_{xx}+\mi |u|^2u_x=0, \ \ x\in \mathbb{T}:=\mathbb{R}/2\pi\mathbb{Z}.
\end{equation*}
We show that above equation admits a family of real analytic quasi-periodic solutions with two Diophantine frequencies. The proof is based
on a partial Birkhoff normal form and KAM method.
\end{abstract}

\textbf{Keywords.} Derivative nonlinear Schr\"{o}dinger equation, Quasi-periodic solution, KAM theory.

\textbf{2000 Mathematics Subject Classification.} Primary: 37K55, 35B15, 35Q55.

\section[]{Introduction and Main Result}
In this paper, we consider the derivative nonlinear Schr\"{o}dinger equation (Chen-Lee-Liu-equation \cite{Chen})
\begin{equation}\label{DNLS4}
       \mi u_t+u_{xx}+\mi |u|^2u_x=0
\end{equation}
with periodic boundary condition
\begin{equation}\label{periodic  boundary condition}
      u(t,0)=u(t,2\pi),
\end{equation}
which appears in studies of ultrashort optical pulses. Moreover, Eq.\,\eqref{DNLS4}
has several applications in e.g. plasma physics and nonlinear fiber optics referring to \cite{Kodama} and \cite{Mjolhus}.

Consider the Hamiltonian partial differential equation
\begin{equation*}
    \dot{w}= Aw + F(w).
\end{equation*}
For some Sobolev space $\mathcal {H}^p \ni w$, linear operator $A$ maps $\mathcal {H}^p$ to $\mathcal {H}^{p-d}$ and
nonlinear term $F$ sends some neighborhood of $\mathcal {H}^p$ to $\mathcal {H}^{p-\delta}$.
One calls $d$ and $\delta$ the orders of $A$ and $F$ respectively.

When $\delta\leqslant 0$, the vector field $F$ is called bounded perturbation. The existence of quasi-periodic solutions of such PDEs has been widely investigated by many authors
\cite{Berti,Bourgain1,Bourgain2,Chierchia,Craig,Eliasson,Geng2,Geng1,Grebert,Kuksin1,kuksin4,kuksin5,Kuksin2,Poschel1,Poschel3,Si,Wayne,Yuan3,Yuan4,Zhang}.

When $\delta>0$, the vector field $F$ is called unbounded perturbation. Unlike the bounded case, there are few results of KAM theory for partial differential equations with unbounded perturbation.
The first KAM theorem for unbounded perturbations is due to Kuksin \cite{Kuksin6,Kuksin3} under the assumption $0< \delta < d-1$.
See also Kappeler and P\"{o}schel \cite{Kappeler}. Another KAM theorem with unbounded linear Hamiltonian perturbation is due to
Bambusi and Graffi \cite{Bambusi} which consider the time dependent linear Schr\"{o}dinger equation.

When $0< \delta = d-1$, which is called `` the limiting case", the nonlinearity of the PDE is the strongest. Recently, Liu and Yuan \cite{Yuan1} give a
theorem which generalizes Kuksin's theorem from $\delta< d-1$ to $\delta\leqslant d-1$.
In their paper, they still consider the homological equations of variable coefficients:
\begin{equation}\label{homological equation 0.1}
    -\mi\partial_{\omega}u+ \lambda u+\mu(\theta)u = p(\theta), \ \ \ |\text{Im}\,\theta|<s,
\end{equation}
Using the generalized Kuksin's theorem, Liu and Yuan \cite{Yuan2} establish an improved KAM theorem
which can prove the existence of quasi-periodic solution of a class of derivative nonlinear Schr\"{o}dinger  equations (DNLS)
\begin{equation}\label{DNLS1}
\mi u_t+u_{xx}-M_{\xi}u+\mi f(u,\bar{u})u_x=0,
\end{equation}
with Dirichlet boundary conditions,
where $f(u,\bar{u})$ be a analytic function in $\mathbb{C}^{2}$ with
\begin{equation*}
    \overline{ f(u,\overline{u})}=f(u,\overline{u}),  f(-u,-\overline{u})=-f(u,\overline{u}).
\end{equation*}

Then, Geng and Wu \cite{Geng} consider the derivative nonlinear Schr\"{o}dinger  equation
\begin{equation}\label{DNLS3}
       \mi u_t-u_{xx}-\mi (|u|^4u)_x=0
\end{equation}
with periodic boundary condition. Unlike \cite{Yuan2}, by using the compact form and the gauge invariant property, the homological equation \eqref{homological equation 0.1} becomes into
the following forms:
\begin{equation}\label{homological equation 0.2}
    -\mi\partial_{\omega}u+ \lambda u= p(\theta).
\end{equation}
Since normal form obtained in \cite{Geng} is independent
of the angle variables $\theta$, it is different from Kuksin's theorem \cite{Kuksin3} and Liu and Yuan's theorem \cite{Yuan2}. Then, using an abstract KAM theorem with
angle independent normal form, they obtain the real analytic quasi-periodic solutions for the derivative
nonlinear Schr\"{o}dinger equation \eqref{DNLS3} with only two Diophantine frequencies.

Lately, for a class of derivative nonlinear Schr\"{o}dinger  equation
\begin{equation}\label{DNLS2}
       \mi u_t+u_{xx}+\mi (f(|u|^2)u)_x=0,
\end{equation}
Liu and Yuan \cite{Yuan5} prove that Eq.\eqref{DNLS2} with periodic boundary conditions admits many $C^{\infty}$ (not real analytic) quasi-periodic solutions
with $N$ Diophantine frequencies, where $N$ is any positive integer. It is worth to note that the momentum conservation plays an important role in their results.
To use both Kuksin's lemma in \cite{Kuksin6} and the estimates in \cite{Yuan1}, the homological equations  must be  scalar i.e. the normal frequency
$\Omega_{j}$ is required to be simple $\Omega_{j}^{\sharp}=1$. So the KAM theorem for
unbounded perturbations in \cite{Yuan2} can not be used to the derivative nonlinear Schr\"{o}dinger
equation\eqref{DNLS2} with periodic boundary conditions, since the multiplicity $\Omega_{j}^{\sharp}=2$. But this difficulty can be avoided since
the nonlinear $\mi (f(|u|^2)u)_x$ does not contain the space variable $x$ explicitly, so that momentum is conserved for \eqref{DNLS2}. More details can be found in \cite{Yuan5}.

In this paper, we consider the derivative nonlinear Schr\"{o}dinger equation \eqref{DNLS4}
\begin{equation*}
       \mi u_t+u_{xx}+\mi |u|^2u_x=0
\end{equation*}
with periodic boundary condition.
Obviously, Eq.\,\eqref{DNLS4} is not contained in Eq.\,\eqref{DNLS1}, which is our first motivation to consider the quasi-periodic
solutions of \eqref{DNLS4}.

For \eqref{DNLS2}, if $f$ is the identity function, i.e. $f(z)=z$, then \eqref{DNLS2} reduces to
\begin{equation}\label{DNLS5}
       \mi u_t+u_{xx}+\mi (|u|^2u)_x=0,
\end{equation}
which appears in various physical applications and has been widely studied in the literature.
Applying the gauge transformation ((2.12) in \cite{Wadati})
\begin{equation*}
    v=u(x)\,\text{exp}\,\left\{-\frac{\mi}{2}\int^{x}_{-\infty}|u(\eta)|^2 \dif\, \eta\right\},
\end{equation*}
above Eq. \eqref{DNLS5} is transformed into Eq.\,\eqref{DNLS4}. However, in \cite{Xu}, they point that
\begin{quote}
    ``But the gauge transformation can't preserve the reduction conditions in spectral problem of the Kaup and Newell (KN)
    \cite{Kaup} system and involve complicated integrations. So it deserves to be investigated separately.''
\end{quote}
This is our second motivation to consider \eqref{DNLS4}.

To obtain the real analytic quasi-periodic solutions of Eq.\,\eqref{DNLS4}, we construct a KAM iteration for Hamiltonian PDEs with some special perturbations
which admits the compact form and the gauge invariant property like \cite{Geng}.

Assume that
\begin{equation}\label{zero value condition}
    [u]:=\frac{1}{2\pi}\int_{0}^{2\pi}u\,\dif\,x = 0,
\end{equation}
then the main result is described as follows:
\begin{theorem}\label{theorem 1}
Consider the derivative nonlinear Schr\"{o}dinger equation \eqref{DNLS4} with periodic boundary
conditions \eqref{periodic  boundary condition} and \eqref{zero value condition}. Fix $n_{1}$, $n_{2}$ satisfying that $n_{1}$ is odd and $|n_{2}-n_{1}|=4$. Then there exists a Cantor subset
$\mathcal {O}_{\ast} = \mathcal {O}_{\ast}(n_{1},n_{2}) \subset \mathbb{R}^{2}_{+}$ of positive Lebesgue measure, such that
each $\xi \in \mathcal {O}_{\ast}$ corresponds to a real analytic, quasi-periodic solution
\begin{equation*}
    u(t,x)=\sum\limits_{j=1}^{2}\sqrt{\frac{1}{2\pi}\xi_{j}}e^{\mi(\omega_{*j}t+n_{j}x)}+ O(|\xi|^{\frac{3}{2}})
\end{equation*}
of \eqref{DNLS4}, \eqref{periodic  boundary condition}, \eqref{zero value condition} with two Diophantine frequencies
\begin{equation*}
    \omega_{\ast j}= n_{j}^2+ O(|\xi|), \ \ 1 \leqslant j\leqslant 2.
\end{equation*}
Moreover, the quasi-periodic solutions u are linearly stable and depend on $\xi$ Whitney smoothly.
\end{theorem}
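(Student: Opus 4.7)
The plan is to recast \eqref{DNLS4} as an infinite-dimensional Hamiltonian system in Fourier coordinates, reduce it by a partial Birkhoff normal form to a perturbation of an integrable, angle-independent normal form with two tangential modes, and then apply an abstract KAM theorem tailored to the ``limiting'' unbounded case $\delta=d-1=1$ as in \cite{Geng,Yuan2,Yuan5}. The zero-mean condition \eqref{zero value condition} lets me use the expansion $u(t,x)=\frac{1}{\sqrt{2\pi}}\sum_{j\neq 0}q_{j}(t)e^{\mathrm{i}jx}$ and rewrite \eqref{DNLS4} as $\mathrm{i}\dot q_{j}=\partial_{\bar q_{j}}H$ with $H=\sum_{j\neq 0}j^{2}|q_{j}|^{2}+G$, where $G$ arises from $\frac{\mathrm{i}}{2}\int(u^{2}\bar u\bar u_{x}-\bar u^{2}uu_{x})\,dx$. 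Two structural properties are crucial and will be propagated through all subsequent transformations: \emph{gauge invariance} $q_{j}\mapsto e^{\mathrm{i}\alpha}q_{j}$, which forces every monomial of $G$ to contain equal numbers of $q$'s and $\bar q$'s, and \emph{momentum conservation}, which forces $j_{1}+j_{2}=j_{3}+j_{4}$ on every monomial $q_{j_{1}}q_{j_{2}}\bar q_{j_{3}}\bar q_{j_{4}}$.

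Next I would carry out a partial Birkhoff normal form around the tangential index set $\{n_{1},n_{2}\}$. A resonant monomial must satisfy, in addition to the gauge and momentum constraints, the elliptic resonance $j_{1}^{2}+j_{2}^{2}=j_{3}^{2}+j_{4}^{2}$; together with $j_{1}+j_{2}=j_{3}+j_{4}$ this implies $j_{1}j_{2}=j_{3}j_{4}$, so $\{j_{1},j_{2}\}=\{j_{3},j_{4}\}$ and only the diagonal terms $|q_{j_{1}}|^{2}|q_{j_{2}}|^{2}$ survive. Introducing action-angle variables $q_{n_{b}}=\sqrt{\xi_{b}+y_{b}}\,e^{\mathrm{i}\theta_{b}}$ for $b=1,2$ and keeping the normal modes $z_{j}=q_{j}$ for $j\notin\{n_{1},n_{2}\}$, the transformed Hamiltonian takes the angle-independent form
\begin{equation*}
H=\langle\omega(\xi),y\rangle+\sum_{j}\Omega_{j}(\xi)|z_{j}|^{2}+P(y,\theta,z,\bar z;\xi),
\end{equation*}
with $P=O(|y|^{2}+|y||z|^{2}+|z|^{4})$, $\omega_{b}(\xi)=n_{b}^{2}+(\textup{linear in }\xi)$ and $\Omega_{j}(\xi)=j^{2}+(\textup{linear in }\xi)$.

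Then I would verify the hypotheses of the abstract KAM theorem. The arithmetic assumption ``$n_{1}$ odd, $|n_{2}-n_{1}|=4$'' is what makes the explicit $2\times 2$ twist matrix $\partial\omega/\partial\xi$ invertible (this forces the non-vanishing of a rational expression in $n_{1},n_{2}$ built from the surviving fourth-order coefficients) and simultaneously excludes the dangerous small-divisor resonances $\langle k,\omega\rangle\pm\Omega_{i}\pm\Omega_{j}=0$ in the first and second Melnikov conditions for $|k|\neq0$; the odd-parity of $n_{1}$ in particular prevents the tangential combinations $k_{1}n_{1}^{2}+k_{2}n_{2}^{2}$ from coinciding with $i^{2}\pm j^{2}$ in the few cases not already killed by momentum. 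A standard measure-counting argument as in \cite{Geng} then yields a positive-measure Cantor set $\mathcal{O}_{\ast}\subset\mathbb{R}_{+}^{2}$. Gauge invariance and the compact form of $P$ are preserved under the KAM iteration, so every step reduces to scalar homological equations of the angle-independent type \eqref{homological equation 0.2}.

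The main obstacle is the unboundedness of the nonlinearity ($\delta=1=d-1$, the ``limiting case'') combined with the double multiplicity $\Omega_{j}=\Omega_{-j}$ inherent to periodic boundary conditions, which blocks a direct application of the theorems for \eqref{DNLS1} whose proofs require $\Omega_{j}^{\sharp}=1$. This is precisely the obstruction resolved in \cite{Geng,Yuan5}: once gauge and momentum conservation are used to decouple the $\pm j$ blocks and trivialize the $\theta$-dependence of the normal form, the homological equations become scalar, the Kuksin/Liu--Yuan estimates apply, and the iteration converges in a suitable real-analytic class. Reading off the KAM solution on $\{y=0,\,z=\bar z=0\}$ and pulling back through the Birkhoff transformation produces the asymptotics $u(t,x)=\sum_{b=1}^{2}\sqrt{\xi_{b}/(2\pi)}\,e^{\mathrm{i}(\omega_{\ast b}t+n_{b}x)}+O(|\xi|^{3/2})$, while the angle-independent normal form gives the linearized equation around the torus a block-diagonal structure, from which linear stability is immediate.
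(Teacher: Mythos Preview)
Your outline is the paper's approach: Hamiltonian formulation in Fourier modes, partial Birkhoff normal form exploiting compact form (momentum) and gauge invariance, action--angle coordinates on $\{n_1,n_2\}$, and a KAM iteration \`a la \cite{Geng} with angle--independent normal form so that the homological equations are of type \eqref{homological equation 0.2}.

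Two points where your account drifts from the paper and should be corrected. First, the arithmetic hypothesis ``$n_1$ odd, $|n_2-n_1|=4$'' is \emph{not} what makes the twist matrix invertible: after invoking the $L^2$ conservation law the paper gets $A=\frac{1}{4\pi}\mathrm{diag}(n_1-n_2,\,n_2-n_1)$, which is nonsingular whenever $n_1\neq n_2$. The arithmetic condition is used only in the nonresonance verification (Proposition~\ref{Nondegeneracy} and the proposition after it): requiring $Ak+B^{T}l=0$ forces $k_1=\frac{n_1}{n_2-n_1}d$, $k_2=-\frac{n_2}{n_2-n_1}d$ with $|d|\le 2$, and ``$n_1$ odd, $|n_2-n_1|=4$'' is exactly what rules out nontrivial integer solutions. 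Second, gauge invariance plus compact form do \emph{not} fully decouple the $\pm j$ blocks; terms $e^{\mi\langle k,\theta\rangle}z_j\bar z_{-j}$ with $k\neq 0$ survive. What the compact form buys is the bound $|j|\le \tfrac12\max\{|n_1|,|n_2|\}|k|$, so only finitely many such $j$ occur at each $k$, and the paper handles them by an additional Diophantine condition $|\langle k,\omega\rangle+\Omega_j-\Omega_{-j}|\ge \gamma|j|^{\delta}/|k|^{\tau}$ whose excision still sums to $O(\gamma)$ in measure (Lemma~\ref{measure estimate 4}).
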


\begin{remark} Note that the solution which we obtain in Theorem \ref{theorem 1} is real analytic like \cite{Geng}, although the number of frequencies is only 2 not any positive
integer $N$. The reason is that in the KAM iteration, we still let $s_{m}$,
the radius of $Im\,\theta$, such that $s_{m}\rightarrow \frac{s}{2}$ as $m\rightarrow\infty$.
\end{remark}

The rest of the paper is organized as follows: In section 2, we give some definition such as
compact form and gauge invariant property. Although all the definitions are the same as \cite{Geng}, we would like to list
them here for reader's convenience.
In Section 3, we will give the Hamiltonian setting
corresponding to the Eq.\,\eqref{DNLS4} and derive a partial Birkhoff normal form of order
four for the lattice Hamiltonian. In Section 4, we will show some conditions about frequencies and
perturbation for the lattice Hamiltonian obtained in Section 3. In Section 5, we will give details for one step KAM iteration, summarize as an iteration lemma
and prove its convergence. At last, we give
the necessary measure estimate for the parameter set. Some technical lemmas necessary are given in the Appendix.

\section[]{Preliminary}
Denote $\mathbb{Z}_{\ast}=\mathbb{Z}\setminus \{0\}$,
for any integer $a > 0$ and $p\geqslant 0$, we introduce the phase space, complex valued functions space on $\mathbb{T}=\mathbb{R}/2\pi \mathbb{Z}$:
\begin{equation*}
    \mathcal {H}^{a,p}=\left\{u\in L^2(\mathbb{T},\mathbb{C}): \|u\|_{a,p}^2=\sum\limits_{n\in \mathbb{Z}_{\ast}}|\hat{u}_{n}|^2|n|^{2p}\me^{2a|n|}< +\infty\right\},
\end{equation*}
where $u=\sum_{n\in \mathbb{Z}_{\ast}}\hat{u}_{n}\me^{\mi nx}$ is the discrete Fourier transform.

Let $\ell^{a,p}$ be the space of all bi-complex valued sequences $q=(\cdots,q_{-2}, q_{-1}, q_1,q_2,\cdots)$ with
\begin{equation*}
    \|q\|^2_{a,p}=\sum\limits_{n\in \mathbb{Z}_{\ast}}|q_n|^2|n|^{2p}\me ^{2a|n|}<+ \infty.
\end{equation*}
The convolution $w*z$ of two such sequences is defined by $(w*z)_{n}=\sum_{m}w_{n-m}z_{m}$.

\begin{lemma}\emph{\cite{Kuksin1}}
For $a > 0, \ p > \frac{1}{2}$, the space $\ell^{a,p}$ is a Banach algebra with respect to convolution
of sequences, and
\begin{equation*}
    \|w*z\|_{a,p}\leq c\|w\|_{a,p}\|z\|_{a,p},
\end{equation*}
with a constant $c$ depending only on $p$.
\end{lemma}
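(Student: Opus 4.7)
My plan is to reduce the convolution estimate to Young's inequality by splitting the weight $|n|^p e^{a|n|}$ along the triangle inequality $|n| \le |n-m| + |m|$. The exponential weight splits multiplicatively as $e^{a|n|} \le e^{a|n-m|} e^{a|m|}$, which is exact; the polynomial weight splits additively via the Peetre-type bound $|n|^p \le C_p(|n-m|^p + |m|^p)$, with $C_p = \max(1, 2^{p-1})$ obtained from subadditivity of $x \mapsto x^p$ when $0 < p \le 1$ and from convexity when $p \ge 1$.

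First I would insert both splits into the definition $(w*z)_n = \sum_m w_{n-m} z_m$ to obtain a pointwise bound of the form
$|n|^p e^{a|n|}\,|(w*z)_n| \le C_p \bigl((U*V)_n + (U'*V')_n\bigr),$
where the polynomial factor is attached to $w$ in one sum and to $z$ in the other, e.g.\ $U_k = |k|^p|w_k|e^{a|k|}$ paired with $V_k = |z_k|e^{a|k|}$, and symmetrically for $U', V'$. Taking $\ell^2$-norms and applying Young's convolution inequality $\|f*g\|_{\ell^2} \le \|f\|_{\ell^2}\|g\|_{\ell^1}$, I would arrange the factors so that the one carrying the polynomial weight sits in $\ell^2$, giving $\|U\|_{\ell^2} = \|w\|_{a,p}$ and $\|V'\|_{\ell^2} = \|z\|_{a,p}$.

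The remaining step is to bound the unweighted exponential sequence in $\ell^1$ by the $\ell^{a,p}$-norm. By Cauchy--Schwarz,
$\sum_{m \in \mathbb{Z}_{\ast}} |z_m| e^{a|m|} = \sum_m \bigl(|z_m||m|^p e^{a|m|}\bigr)\cdot |m|^{-p} \le \|z\|_{a,p} \Bigl(\sum_{m \in \mathbb{Z}_{\ast}} |m|^{-2p}\Bigr)^{1/2},$
and the tail sum converges precisely when $2p > 1$. This is the one and only place where the hypothesis $p > 1/2$ is used. The symmetric estimate controls $\|U'\|_{\ell^1}$ by $\|w\|_{a,p}$. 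Combining the pieces yields $\|w*z\|_{a,p} \le c\,\|w\|_{a,p}\|z\|_{a,p}$ with $c = 2 C_p\bigl(\sum_{m \neq 0}|m|^{-2p}\bigr)^{1/2}$, a constant depending only on $p$, and completeness of the weighted $\ell^2$ structure upgrades this to a Banach algebra.

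I do not expect a genuine obstacle: the argument is standard weighted-convolution bookkeeping, and the main thing to be careful about is that the constant $C_p$ in the Peetre split be uniform in $n$ and $m$, and that the decoupling into $U*V$ and $U'*V'$ be performed in a way that leaves precisely the weighted norm (not more) on each factor so the final constant absorbs only $p$-dependence.
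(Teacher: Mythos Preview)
Your argument is correct and is the standard proof of this fact. Note, however, that the paper does not supply its own proof: the lemma is stated with a citation to Kuksin \cite{Kuksin1} and left unproven, so there is nothing in the paper to compare your approach against. Your weight-splitting via $e^{a|n|}\le e^{a|n-m|}e^{a|m|}$ and $|n|^p\le C_p(|n-m|^p+|m|^p)$, followed by Young's inequality and Cauchy--Schwarz to pass from the unweighted $\ell^1$ norm to the weighted $\ell^2$ norm, is exactly the textbook route; the only minor point worth recording (since the sequences here are indexed by $\mathbb{Z}_\ast=\mathbb{Z}\setminus\{0\}$) is that one tacitly extends $w$ and $z$ by zero at the index $0$ so that the convolution sum and the Peetre split make sense when $n=m$, which is harmless.
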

In the following, we give the same definitions of compact form and gauge invariant property in \cite{Geng}. To keep the continuity and
enhance the readability, we list corresponding definitions and properties here.

Let
\begin{equation*}
    \mathcal {J}=\{\{n_1,n_2\}\in \mathbb{Z}_{\ast}| n_{1}\ \text{is odd}\ \ \text{and}\ |n_{2}-n_{1}|=4\}.
\end{equation*}
Without loss of generality, we assume that $n_{2} > n_{1}> 0 $ for simplicity.

\begin{definition}\emph{\cite{Geng}} Given $\{n_1,n_2\}\in \mathcal {J}$. A real analytic function
\begin{equation*}
    F=F(\theta, I, z, \bar{z})=\sum\limits_{k,\alpha,\beta}F_{k,\alpha,\beta}e^{\mi\langle k,\theta\rangle}z^{\alpha}\bar{z}^{\beta}
\end{equation*}
is said to admit a \textbf{compact form} with respect to $n_1, n_2$ if
\begin{equation*}
    F_{k,\alpha,\beta}=0, \ \ \text{whenever}\ k_1n_1+k_2n_2+\sum\limits_{n}(\alpha_n-\beta_n)n\neq 0,
\end{equation*}
where $k=(k_1,k_2)\in \mathbb{Z}^2$ and $\alpha=(\cdots,\alpha_n,\cdots), \beta=(\cdots,\beta_n,\cdots)$, $\alpha_n, \beta_n\in \mathbb{N}$, with finitely many
nonzero components of positive integers.
\end{definition}
Consider the Possion bracket
\begin{equation*}
    \{F,G\}=\sum\limits_{1\leqslant j\leqslant 2}\frac{\partial F}{\partial \theta_j}\frac{\partial G}{\partial I_j}-\frac{\partial F}{\partial I_j}\frac{\partial G}{\partial \theta_j}+
    \mi\sum\limits_{j\in \mathbb{Z}}\frac{\partial F}{\partial z_j}\frac{\partial G}{\partial \bar{z}_j}-\frac{\partial F}{\partial \bar{z}_j}\frac{\partial G}{\partial z_j},
\end{equation*}
we have the following lemma
\begin{lemma}\label{compact form}
\emph{\cite{Geng}} Given $\{n_1,n_2\}\in \mathcal {J}$ and consider two real analytic functions $F(\theta, I, z, \bar{z})$,
$G(\theta, I, z, \bar{z})$. If both $F$ and $G$ have compact forms with respect to $n_1, n_2$, then so does $\{F, G\}$.
\end{lemma}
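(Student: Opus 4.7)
My plan is to assign a momentum to each Fourier--Taylor monomial and check that the Poisson bracket respects momentum addition. Concretely, define
\begin{equation*}
M(k,\alpha,\beta) := k_1 n_1 + k_2 n_2 + \sum_{n}(\alpha_n-\beta_n)n,
\end{equation*}
so that the compact form condition for $F$ is precisely the statement that the expansion of $F$ is supported on monomials with $M(k,\alpha,\beta)=0$. The goal then becomes showing that every monomial in the Fourier--Taylor expansion of $\{F,G\}$ with a nonzero coefficient has zero momentum.

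First I track how each elementary operation affects the momentum of a single monomial $e^{\mi\langle k,\theta\rangle}z^\alpha \bar z^\beta$ (the $I$-dependence is absorbed into the coefficient). The operators $\partial_{\theta_j}$ and $\partial_{I_j}$ leave $(k,\alpha,\beta)$ unchanged, so they preserve $M$. The operator $\partial_{z_j}$ decreases $\alpha_j$ by one and therefore shifts $M$ by $-j$, while $\partial_{\bar z_j}$ decreases $\beta_j$ by one and shifts $M$ by $+j$. Since multiplying two monomials simply adds their momenta, each monomial appearing in $\partial_{z_j}F \cdot \partial_{\bar z_j}G$ has momentum
\begin{equation*}
(M_F-j) + (M_G+j) = M_F + M_G,
\end{equation*}
and $\partial_{\bar z_j}F \cdot \partial_{z_j}G$ likewise gives $(M_F+j)+(M_G-j) = M_F+M_G$. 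The $(\theta,I)$-terms similarly yield monomials of momentum $M_F+M_G$. Under the assumption $M_F=M_G=0$ on the support of each expansion, every monomial on the right-hand side of the Poisson bracket formula has momentum $0$, so $\{F,G\}$ admits a compact form.

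The main obstacle is just bookkeeping; the substantive content of the lemma is the additive identity $M(k+k',\alpha+\alpha',\beta+\beta')=M(k,\alpha,\beta)+M(k',\alpha',\beta')$ together with the sign cancellation between the $\pm j$ shifts produced by the $z$- and $\bar z$-derivatives. A more conceptual reformulation, which I would mention only as a remark, is that
\begin{equation*}
\mathcal{M} := -\mi(n_1\partial_{\theta_1}+n_2\partial_{\theta_2}) + \sum_j j\bigl(z_j\partial_{z_j}-\bar z_j\partial_{\bar z_j}\bigr)
\end{equation*}
is (up to a sign) the Hamiltonian vector field generated by $n_1 I_1 + n_2 I_2 + \sum_j j z_j \bar z_j$; since Hamiltonian vector fields act as derivations on the Poisson bracket, $\mathcal{M}F=\mathcal{M}G=0$ automatically yields $\mathcal{M}\{F,G\}=0$, recovering the conclusion.
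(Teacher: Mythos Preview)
Your argument is correct: defining the momentum $M(k,\alpha,\beta)$, tracking how each derivative shifts it, and using additivity under products is exactly the right mechanism, and your side remark that this is just the derivation property of the Hamiltonian vector field of $n_1 I_1+n_2 I_2+\sum_j j z_j\bar z_j$ is the clean conceptual summary.

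There is nothing to compare against in this paper: the lemma is quoted from \cite{Geng} and no proof is reproduced here (the only argument the author writes out in this block is for Lemma~\ref{The special form}). Your proof is the standard momentum-conservation computation one finds in the cited reference, so there is no methodological divergence to discuss.
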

\begin{definition}\emph{\cite{Geng}} A real analytic function
\begin{equation*}
    F=F(\theta, I, z, \bar{z})=\sum\limits_{k,\alpha,\beta}F_{k,\alpha,\beta}e^{\mi\langle k,\theta\rangle}z^{\alpha}\bar{z}^{\beta}
\end{equation*}
is said to admit \textbf{gauge invariant property} if
\begin{equation*}
    F_{k,\alpha,\beta}=0, \ \ \text{whenever}\ k_1+k_2+\sum\limits_{n}(\alpha_n-\beta_n)\neq 0,
\end{equation*}
where $k=(k_1,k_2)\in \mathbb{Z}^2$ and $\alpha=(\cdots,\alpha_n,\cdots), \beta=(\cdots,\beta_n,\cdots)$, $\alpha_n, \beta_n\in \mathbb{N}$, with finitely many
nonzero components of positive integers.
\end{definition}

\begin{lemma}\label{gauge invariant property}\emph{\cite{Geng}}
Consider two real analytic functions $F(\theta, I, z, \bar{z})$,
$G(\theta, I, z, \bar{z})$. If both $F$ and $G$ admit the gauge invariant property, then so does $\{F, G\}$.
\end{lemma}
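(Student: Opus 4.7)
The plan is to argue at the level of Fourier--Taylor coefficients. Define the linear functional $\sigma(k,\alpha,\beta) := k_1+k_2+\sum_n (\alpha_n-\beta_n)$ on multi-indices, so that the gauge invariant property for $F=\sum F_{k,\alpha,\beta} e^{\mi\langle k,\theta\rangle} z^\alpha \bar z^\beta$ reads $F_{k,\alpha,\beta}=0$ whenever $\sigma(k,\alpha,\beta)\neq 0$. The key observation is that $\sigma$ is additive under the multi-index arithmetic produced by the four differentiation-and-multiplication operations in the Poisson bracket, so I need only verify the arithmetic case by case and then sum.

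Expand $F$ and $G$ into monomials and apply the bracket termwise. The derivations $\partial_{\theta_j}$ and $\partial_{I_j}$ do not alter the multi-index $(k,\alpha,\beta)$ beyond scalar factors (a factor $\mi k_j$ for the first, and a polynomial factor in $I$ for the second), so neither changes $\sigma$. Hence a contribution from $\partial_{\theta_j}F\cdot\partial_{I_j}G$ or $\partial_{I_j}F\cdot\partial_{\theta_j}G$ is a monomial with multi-index $(k+k',\alpha+\alpha',\beta+\beta')$, whose $\sigma$-value is $\sigma(k,\alpha,\beta)+\sigma(k',\alpha',\beta')$. For the normal-mode part, $\partial_{z_j}$ sends $z^\alpha\bar z^\beta$ to $\alpha_j z^{\alpha-e_j}\bar z^\beta$ and $\partial_{\bar z_j}$ sends it to $\beta_j z^\alpha \bar z^{\beta-e_j}$, so the product $\partial_{z_j}F\cdot\partial_{\bar z_j}G$ produces monomials with multi-index $(k+k',\alpha+\alpha'-e_j,\beta+\beta'-e_j)$; the two $e_j$'s contribute $-1$ to $\sum_n\alpha_n$ and $-1$ to $\sum_n\beta_n$, which cancel in $\sigma$, so again $\sigma$ adds. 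Consequently every coefficient of $\{F,G\}$ is a sum of products $F_{k,\alpha,\beta}G_{k',\alpha',\beta'}$ indexed by pairs $(k,\alpha,\beta),(k',\alpha',\beta')$ whose $\sigma$-values add to that of the output multi-index. When both $F$ and $G$ are gauge invariant, each such product carries $\sigma(k,\alpha,\beta)=\sigma(k',\alpha',\beta')=0$, forcing the output coefficient to vanish unless $\sigma$ is zero at the output as well.

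As a conceptual check, gauge invariance of $F$ is precisely the condition $\{F,H_0\}=0$ for the formal Hamiltonian $H_0:=I_1+I_2+\sum_n z_n\bar z_n$, whose Hamiltonian flow is the diagonal phase rotation $\theta_j\mapsto\theta_j+t$, $z_n\mapsto e^{\mi t}z_n$, $\bar z_n\mapsto e^{-\mi t}\bar z_n$, $I_j\mapsto I_j$; the Jacobi identity would then yield $\{\{F,G\},H_0\}=0$ immediately. The coefficient proof above is preferable in this analytic setting because it sidesteps any convergence question for $\sum_n z_n\bar z_n$ on $\ell^{a,p}$. No substantive obstacle is anticipated: the argument reduces entirely to routine additive bookkeeping on multi-indices, in exact parallel with the compact-form Lemma \ref{compact form}, differing only in the weight $(1,1,1,\ldots)$ in place of $(n_1,n_2,\ldots,n,\ldots)$.
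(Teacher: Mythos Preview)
Your argument is correct: the additivity of the linear functional $\sigma(k,\alpha,\beta)=k_1+k_2+\sum_n(\alpha_n-\beta_n)$ under each of the four differentiation-and-multiplication operations in the Poisson bracket is exactly what is needed, and your bookkeeping is accurate. Note, however, that the paper does not supply its own proof of this lemma; it simply cites \cite{Geng}, so there is nothing in the present paper to compare against beyond the parallel with Lemma~\ref{compact form}, which your last paragraph already identifies. Your coefficient computation is the standard direct verification and is almost certainly the same as the one in the cited reference.
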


\begin{lemma}\label{The special form}\emph{\cite{Geng}} Given $\{n_1,n_2\}\in \mathcal {J}$ and consider a real analytic functions $F(\theta, I, z, \bar{z})$. If $F$
has compact form and admits gauge invariant property with respect to $n_1, n_2$,
then $F$ contains no terms of the form $e^{\mi\langle k, \theta\rangle}z_n\bar{z}_n$ with $k\neq 0$ and $e^{\mi\langle k, \theta\rangle}z_n\bar{z}_m$ with $k=0$ and $n\neq m$.
\end{lemma}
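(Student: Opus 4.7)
The plan is to read off both constraints directly from the monomial shape of the two offending term types and show they cannot simultaneously hold unless the conclusion of the lemma is satisfied. Writing $F_{k,\alpha,\beta}\neq 0$, the compact form gives the resonance condition
\begin{equation*}
k_1 n_1 + k_2 n_2 + \sum_n (\alpha_n-\beta_n)n = 0,
\end{equation*}
while the gauge invariance condition gives
\begin{equation*}
k_1+k_2+\sum_n(\alpha_n-\beta_n)=0.
\end{equation*}
Both must hold simultaneously, and the two forbidden monomials are designed so that the $z,\bar z$ contributions collapse, leaving a purely algebraic constraint on $k$ that turns out to force $k=0$ or $n=m$.

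First I would handle the term $e^{\mi\langle k,\theta\rangle}z_n\bar z_n$. Here $\alpha=\beta=e_n$, so $\sum_m(\alpha_m-\beta_m)m=0$ and $\sum_m(\alpha_m-\beta_m)=0$. The two conditions reduce to $k_1 n_1 + k_2 n_2 = 0$ and $k_1+k_2=0$. Substituting $k_1=-k_2$ gives $k_2(n_2-n_1)=0$. Since $\{n_1,n_2\}\in\mathcal{J}$ implies $|n_2-n_1|=4\neq 0$, we conclude $k_2=0$ and hence $k_1=0$, contradicting $k\neq 0$. So no such term can appear.

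Next I would treat the term $e^{\mi\langle k,\theta\rangle}z_n\bar z_m$ with $k=0$. Here $\alpha=e_n$, $\beta=e_m$, so gauge invariance is automatic ($1-1=0$), while compact form becomes simply $n-m=0$. Hence $n=m$, ruling out $n\neq m$. Combining the two cases yields the lemma.

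The argument is entirely combinatorial and I see no genuine obstacle; the only substantive input is the hypothesis $|n_2-n_1|=4$, which is what guarantees $n_2\neq n_1$ and hence the invertibility of the linear system in $(k_1,k_2)$ in the first case. This is precisely the role that the tangential-sites condition in $\mathcal{J}$ plays in enabling the subsequent normal form analysis.
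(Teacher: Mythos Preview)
Your proof is correct and follows essentially the same approach as the paper's: both cases are handled by reading off the compact form and gauge invariance constraints for the specific monomials, reducing the first case to the linear system $k_1 n_1 + k_2 n_2 = 0$, $k_1 + k_2 = 0$ (forcing $k=0$ since $n_1\neq n_2$), and the second case directly to $n-m=0$. Your version is slightly more explicit in writing out the substitution $k_1=-k_2$, but the argument is the same.
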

Although this lemma has been proved in \cite{Geng}, in order to make the reader understand the role which compact form and gauge invariant property play in KAM
iteration, we would like to ``prove'' it again.
\begin{proof}
Consider $F(\theta, I, z, \bar{z})=\sum\limits_{k,\alpha,\beta}F_{k,\alpha,\beta}e^{\mi\langle k,\theta\rangle}z^{\alpha}\bar{z}^{\beta}$ with $\alpha=\beta=e_{n}$,
where $e_n$ denotes the $n$-th component being 1 and the other components being 0. Since $F$ has compact form with respect to $n_1, n_2$ and admits the gauge invariant property, we have
\begin{equation*}
\left\{\begin{split}
    &k_1n_1+k_2n_2+n-n&=0,\\
    &k_1+k_2+1-1&=0.
\end{split}
\right.
\end{equation*}
In View of $n_1\neq n_2$, we obtain that $k_1=k_2=0$.
Then consider $F(\theta, I, z, \bar{z})$ with $\alpha=e_{n}, \beta=e_{m}$ and $k=0$. Since $F$ has compact form with respect to $n_1, n_2$, we obtain
\begin{equation*}
    n-m=0.
\end{equation*}
Hence, lemma is proved.
\end{proof}
\begin{remark} From the proof of Lemma \ref{The special form}, we can find that the method in \cite{Geng} restrict the number of frequencies of quasi-periodic solution
to only 2, unlike in \cite{Yuan5} any positive number $N$.
\end{remark}

\begin{remark} We should show that compact form and the gauge invariant property will be preserved along KAM iterations. These properties enable simplify the homological equation in each KAM step.
\end{remark}
We denote
\begin{equation*}
    \mathcal {A}_{n_1,n_2}=\left\{P: P=\sum\limits_{k\in \mathbb{Z}^2, l\in\mathbb{N}^2, \alpha, \beta}P_{k,l,\alpha,\beta}e^{\mi\langle k,\theta\rangle}I^{l}z^{\alpha}\bar{z}^{\beta} \right\},
\end{equation*}
where $k, \alpha, \beta$ have the following relations:
\begin{equation*}
          k_1n_1+k_2n_2+\sum\limits_{n}(\alpha_n-\beta_n)n=0\ \text{and}\ k_1+k_2+\sum\limits_{n}(\alpha_n-\beta_n)=0.
\end{equation*}

\section[]{Hamiltonian and Normal Form}

In this section, We will study \eqref{DNLS4} as a Hamiltonian system on some suitable phase space $\mathscr{P}$.  Using the Hamiltonian formulation, we rewrite \eqref{DNLS4} with
periodic boundary condition in the Hamiltonian form
\begin{equation}\label{Hamiltion1}
    u_{t}=-\mi \frac{\partial H}{\partial \bar{u}},
\end{equation}
with Hamiltonian
\begin{equation}\label{Hamiltonian1}
    H=\int^{2\pi}_0 |u_x|^2\,\dif x - \frac{\mi}{2}\int^{2\pi}_0 |u|^2\bar{u}u_x\,\dif x,
\end{equation}
where the gradient is defined with respect to inner product in $L^2$: $\langle u, v\rangle= \int^{2\pi}_{0}u\bar{v}\, \dif x$.

Consider operator $A=-\partial_{xx}$ with the periodic boundary condition. The eigenfunctions is $\{\phi_{j}(x)=\sqrt{\frac{1}{2\pi}}\me^{\mi jx}\}$
and corresponding eigenvalue is $\lambda_{j}=j^2$.

To write it in infinitely many coordinates, we make the ansatz
\begin{equation}\label{ansatz}
    u=\mathscr{L}q=\sum\limits_{j\in \mathbb{Z}_{\ast}}q_j(t)\phi_j(x).
\end{equation}
The coordinates are taken from Hilbert space $\ell^{a,p}$. Due to the definition of spaces, there is an isomorphism
$\mathcal {L}: \ell^{a,p} \longmapsto \mathcal {H}^{a,p}$ with $\|u\|_{a,p}^2=\|\bar{u}\|_{a,p}^2=\|q\|_{a,p}^2$, for each $p \geqslant 0$.

Fixed $a > 0$ and $p>\frac{3}{2}$ in the following, one obtains the Hamiltonian
\begin{equation}\label{Hamiltonian2}
\begin{split}
    H=\Lambda+ G
\end{split}
\end{equation}
with
\begin{equation*}
    \Lambda=\sum\limits_{j\in \mathbb{Z}_{\ast}}\lambda_{j}|q_j|^2,
\end{equation*}
\begin{equation*}
    G=-\frac{\mi}{2}\int^{2\pi}_0 |\mathscr{L}q|^2(\overline{\mathscr{L}q})(\mathscr{L}q)_x\,\dif x,
\end{equation*}
on the phase space $\ell^{a,p}$ with symplectic structure $-\mi\sum_{j\in \mathbb{Z}_{\ast}}\dif q_{j}\wedge \dif \bar{q}_{j}$.
Its equation of motion are
\begin{equation}\label{Hamiltion2}
    \dot{q}_j=-\mi \frac{\partial H}{\partial \bar{q}_j},\ j\in \mathbb{Z}_{\ast}.
\end{equation}
They are the classical Hamiltonian equation of motion for the real and imaginary parts of $q_{j} = x_{j} + \mi y_{j}$ written
in complex notion.

\begin{lemma}Let $a > 0$ and $p \geqslant 0$. If a curve $\mathbb{R} \rightarrow \ell^{a,p}$, $t\rightarrow q(t)$ is a real analytic solution of
\eqref{Hamiltion2}, then
\begin{equation*}
    u=\mathscr{L}q=\sum\limits_{j\in \mathbb{Z}_{\ast}}q_j(t)\phi_j(x).
\end{equation*}
is a solution of \eqref{DNLS4} that is real analytic on $\mathbb{R}\times [0,2\pi]$.
\end{lemma}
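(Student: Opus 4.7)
The plan is to verify two assertions: the transport of the Hamiltonian equations \eqref{Hamiltion2} from Fourier coordinates to physical space under the ansatz $u=\mathscr{L}q$, and the joint real analyticity of $u$ on $\mathbb{R}\times[0,2\pi]$.

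For the regularity part, I would exploit the exponential weight $\me^{2a|n|}$ in the definition of $\ell^{a,p}$ with $a>0$. Since $\mathscr{L}:\ell^{a,p}\to\mathcal{H}^{a,p}$ is an isomorphism, each $u(t,\cdot)\in\mathcal{H}^{a,p}$ extends holomorphically in $x$ to the complex strip $|\mathrm{Im}\,x|<a$, with bounds uniform for $t$ in compact sets because $t\mapsto q(t)$ is real analytic into $\ell^{a,p}$. This shows that the series $u(t,x)=\sum_{j\in\mathbb{Z}_{\ast}}q_j(t)\phi_j(x)$ converges absolutely and uniformly on compact subsets of a complex neighborhood of $\mathbb{R}\times[0,2\pi]$, hence defines a jointly real analytic function there, and term-by-term differentiation in $t$ and $x$ is legitimate.

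For the equivalence with \eqref{DNLS4}, I would start from $\dot q_j=-\mi\,\partial H/\partial\bar q_j$, multiply by $\phi_j$, and sum over $j$ to obtain $u_t=-\mi\sum_j(\partial H/\partial\bar q_j)\phi_j$. Because the symplectic structure $-\mi\sum_j\dif q_j\wedge\dif\bar q_j$ is the pullback along $\mathscr{L}$ of the $L^2$-symplectic form on $\mathcal{H}^{a,p}$, this series equals $-\mi\,\delta H/\delta\bar u$, the $L^2$ variational derivative. A direct variation of
\begin{equation*}
  H=\int_0^{2\pi}|u_x|^2\,\dif x-\frac{\mi}{2}\int_0^{2\pi}|u|^2\bar u\,u_x\,\dif x,
\end{equation*}
treating $u$ and $\bar u$ as independent variables and using integration by parts together with the periodic boundary condition, yields $\delta H/\delta\bar u=-u_{xx}-\mi|u|^2 u_x$. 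Substituting gives $u_t=\mi u_{xx}-|u|^2 u_x$, which is equivalent to $\mi u_t+u_{xx}+\mi|u|^2 u_x=0$, as desired.

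The main obstacle is bookkeeping rather than analysis: one must carefully identify the Fourier-space gradient with the $L^2$ variational derivative, keeping track of the $\sqrt{1/2\pi}$ normalization in $\phi_j$ and of the factor $2$ that arises when $\bar u^2$ inside the cubic term is varied. All convergence and differentiability questions are handled automatically by the exponential weight $a>0$, so no delicate estimate is needed beyond the standard Weierstrass-type argument sketched above.
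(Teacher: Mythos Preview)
Your proposal is correct and follows the standard argument that the paper itself invokes: the paper's own proof consists solely of the sentence ``The proof is similar to Lemma~1 in \cite{Kuksin2}, we omit it,'' and what you have written is precisely that argument --- exponential Fourier decay from the weight $\me^{2a|n|}$ gives holomorphic extension in $x$, real analyticity of $t\mapsto q(t)$ into $\ell^{a,p}$ gives holomorphic extension in $t$, and the Weierstrass argument yields joint analyticity; the identification of the Fourier-side Hamiltonian equations with $u_t=-\mi\,\delta H/\delta\bar u$ and the variational computation $\delta H/\delta\bar u=-u_{xx}-\mi|u|^2u_x$ are both correct.
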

\begin{proof} The proof is similar to Lemma 1 in \cite{Kuksin2}, we omit it.
\end{proof}

Then we establish the regularity of nonlinear Hamiltonian vector field $X_{G}$.
The perturbation term $G$ has the following properties:
\begin{lemma}\label{algebra lemma}
For $a > 0$ and $p>\frac{3}{2}$, the function $G$ is analytic in some neighborhood of the origin in $\ell^{a,p}$ with real value,
and $G_{\bar{q}}$ is an analytic map from some neighborhood of the origin in $\ell^{a,p}$ into $\ell^{a,p-1}$
with
\begin{equation}
    \|G_{\bar{q}}\|_{a,p-1}=O(\|q\|^3_{a,p}).
\end{equation}
\end{lemma}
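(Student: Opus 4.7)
The plan is to expand $G$ in the Fourier coordinates and then invoke the Banach algebra property of $\ell^{a,p-1}$, which is available because $p-1 > 1/2$. Substituting $u = \mathscr{L}q = \sum_{j\in\mathbb{Z}_{\ast}} q_j \phi_j$ and using the orthogonality relation $\int_0^{2\pi}\phi_{j_1}\bar\phi_{j_2}\bar\phi_{j_3}\phi_{j_4}\,\dif x = \frac{1}{2\pi}\delta_{j_1-j_2-j_3+j_4,\,0}$, I obtain the explicit representation
$$G = \frac{1}{4\pi}\sum_{j_1-j_2-j_3+j_4=0} j_4\, q_{j_1}\bar q_{j_2}\bar q_{j_3} q_{j_4},$$
a homogeneous polynomial of degree four in $(q,\bar q)$. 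Reality of $G$ follows from a direct integration by parts: collecting the two contributions to $G - \bar G$ produces $-\frac{\mi}{2}\int_0^{2\pi}|u|^2(\bar u u_x + u\bar u_x)\,\dif x = -\frac{\mi}{4}\int_0^{2\pi}\bigl((|u|^2)^2\bigr)_x\,\dif x = 0$ by periodicity.

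For the gradient, differentiating under the integral gives
$$(G_{\bar q})_m = -\mi\int_0^{2\pi} u\,\bar u\, u_x\,\bar\phi_m\,\dif x,$$
so $G_{\bar q}$ is, up to a fixed constant, the sequence of Fourier coefficients of the function $u\bar u\, u_x$. Transporting the estimate to sequence space via the isomorphism $\mathscr{L}: \ell^{a,p-1}\to\mathcal{H}^{a,p-1}$, it suffices to control $\|u\bar u\, u_x\|_{a,p-1}$. Applying the Banach algebra lemma of \cite{Kuksin1} in $\ell^{a,p-1}$ twice, together with the trivial embedding $\ell^{a,p}\subset\ell^{a,p-1}$ and the bound $\|u_x\|_{a,p-1}\le \|q\|_{a,p}$, yields
$$\|G_{\bar q}\|_{a,p-1}\le C\,\|u\|_{a,p}^2\,\|u_x\|_{a,p-1}\le C\,\|q\|_{a,p}^3,$$
which is the required $O(\|q\|_{a,p}^3)$. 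Analyticity of $G$ on a neighborhood of the origin and of $G_{\bar q}$ as a map into $\ell^{a,p-1}$ is then immediate: each is given by an explicit polynomial expression and the bound above controls all monomials uniformly on a ball of $\ell^{a,p}$.

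The only subtle point is the loss of one derivative coming from the factor $j_4$, equivalently from $u_x$ in the integrand: this is precisely what forces the target space to be $\ell^{a,p-1}$ rather than $\ell^{a,p}$, and it is exactly why the hypothesis $p>3/2$ is imposed so that $\ell^{a,p-1}$ itself remains in the Banach-algebra regime of the cited lemma.
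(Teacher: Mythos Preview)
Your proof is correct and follows essentially the same approach as the paper: both compute $(G_{\bar q})_l$ as the Fourier coefficients of $|u|^2 u_x$ and then apply the Banach algebra property in $\ell^{a,p-1}$ (valid since $p-1>\tfrac12$) together with $\|u_x\|_{a,p-1}\le\|q\|_{a,p}$ to obtain the cubic bound. You add an explicit verification that $G$ is real-valued via integration by parts, which the paper asserts without proof.
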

\begin{proof} Let $G_{\bar{q}}=(\{\frac{\partial G}{\partial \bar{q}_{l}}\})$, where
\begin{equation*}
\begin{split}
    \frac{\partial G}{\partial \bar{q}_{l}}&=-\mi\int^{2\pi}_0 |u|^2u_x\bar{\phi}_{l}\,\dif x, \ \ u=\mathscr{L}q.
\end{split}
\end{equation*}
Let $q$ be in $\ell^{a,p}$, then $(jq_{j})_{j\in \mathbb{Z}_{\ast}}\in \ell^{a,p-1}$.
By the algebra property, we can get
\begin{equation*}
    \||u|^2u_{x}\|_{a,p-1}\leqslant c \|u\|^3_{a,p}.
\end{equation*}
The components of the gradient $G_{\bar{q}}$ are its Fourier coefficients, so $G_{\bar{q}}$ in $\ell^{a,p-1}$, with
\begin{equation*}
    \|G_{\bar{q}}\|_{a,p-1}\leqslant \||u|^2u_{x}\|_{a,p-1}\leqslant c \|u\|^3_{a,p} \leqslant c \|q\|^3_{a,p}.
\end{equation*}
The regularity of $G_{\bar{q}}$ follows from the regularity of its components.

\end{proof}

For the nonlinearity $\mi|u|^2u_{x}$, we find
\begin{equation*}
    G=\frac{1}{2}\sum\limits_{i,j,k,l}jG_{ijkl}q_{i}q_{j}\bar{q}_{k}\bar{q}_{l}=\sum\limits_{\alpha,\beta}G_{\alpha,\beta}q^{\alpha}\bar{q}^{\beta},
\end{equation*}
where
\begin{equation*}
    G_{ijkl}=\int^{2\pi}_{0}\phi_{i}\phi_{j}\bar{\phi}_{k}\bar{\phi}_{l}\ \dif x=
    \left\{\begin{aligned}
             \frac{1}{2\pi}, &\ \ \ \text{if}\ i+j=k+l,\\
             0,  &\ \ \ \text{otherwise}.
    \end{aligned} \right.
\end{equation*}
\begin{remark}
From above special forms of $G$ and $G_{ijkl}$, we know that $G\in \mathcal {A}_{n_1,n_2}$, i.e. $G_{\alpha, \beta}\neq 0$ when $\sum_{n}(\alpha_n-\beta_n)n=0$ and
$\sum_{n}(\alpha_n-\beta_n)=0$.
\end{remark}
\begin{lemma}
If $i+j=k+l$ and $\{i,j\}\neq\{k,l\}$, then
\begin{equation*}
    \lambda_{i}+\lambda_{j}-\lambda_{k}-\lambda_{l}=i^2+j^2-k^2-l^2\neq 0.
\end{equation*}
\end{lemma}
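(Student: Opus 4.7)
The plan is to prove this elementary diophantine fact by a symmetric-function argument and contradiction. Suppose, for the sake of contradiction, that both
\begin{equation*}
    i+j=k+l \quad \text{and} \quad i^2+j^2=k^2+l^2
\end{equation*}
hold. Setting $s:=i+j=k+l$, I would square the first identity to obtain $s^2 = i^2 + 2ij + j^2 = k^2 + 2kl + l^2$. Subtracting the second identity from this then forces $2ij = 2kl$, i.e. $ij = kl$.

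At this point the pairs $\{i,j\}$ and $\{k,l\}$ share the same elementary symmetric functions (sum $s$ and product $p := ij = kl$). Hence both are the (unordered) root sets of the single monic quadratic $X^2 - sX + p = 0$. Since a quadratic over $\mathbb{R}$ has a unique multiset of two roots, one concludes $\{i,j\} = \{k,l\}$, contradicting the standing hypothesis $\{i,j\} \neq \{k,l\}$. This contradiction yields $\lambda_i + \lambda_j - \lambda_k - \lambda_l = i^2+j^2-k^2-l^2 \neq 0$, as required.

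There is essentially no obstacle here; the only point to be careful about is to read $\{i,j\}$ and $\{k,l\}$ as unordered pairs, so that the Vieta/elementary-symmetric-functions step genuinely recovers equality of the two pairs rather than merely of their sums and products. This lemma is precisely what makes the quartic part of the Hamiltonian $G$ amenable to a partial Birkhoff normal form: together with the compactness condition $i+j=k+l$ forced by $G_{ijkl}$, the only resonant monomials $q_iq_j\bar q_k\bar q_l$ are those with $\{i,j\}=\{k,l\}$, which contribute only to the integrable (action-dependent) part and therefore admit a well-defined solution of the cohomological equation used in the subsequent Birkhoff normal form.
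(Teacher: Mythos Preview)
Your proof is correct and follows essentially the same route as the paper's: assume for contradiction that $i^2+j^2=k^2+l^2$, deduce $ij=kl$ from $i+j=k+l$, and conclude $\{i,j\}=\{k,l\}$ via uniqueness of the roots of a quadratic. The paper's argument is slightly terser (it skips the explicit squaring step) but the logic is identical.
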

\begin{proof}
Suppose $i^2+j^2=k^2+l^2$, we can get $ij=kl$. Since there are two real roots for quadratic polynomial at most, we can get
$\{i,j\}=\{k,l\}$. This is a contradiction.
\end{proof}

For all indices $i, j, k, l$ satisfying $i+j=k+l$, we denote
\begin{equation*}
\begin{split}
    \mathcal {N}&=\{(i, j, k, l)\in \mathbb{Z}_{\ast}^4|\{i,j\}=\{k,l\}\},\\
    \Delta_{l}&=\{(i, j, k, l)\in \mathbb{Z}_{\ast}^4|\ \text{there are right $l$ components not in}\{n_{1}, n_{2}\}\},\\
\end{split}
\end{equation*}
for $l=0,1,2$ and
\begin{equation*}
    \Delta_{3}=\{(i, j, k, l)\in \mathbb{Z}_{\ast}^4|\ \text{there are at least 3 components not in}\{n_{1}, n_{2}\}\}.
\end{equation*}

\begin{lemma}\label{small division condition1}
For fixed $n_{1}, n_{2}$, denote $N= \max \{|n_{1}|, |n_{2}|\}$.
Let $(i, j, k, l)\in (\Delta_{0}\setminus \mathcal {N})\cup\Delta_{1}\cup (\Delta_{2}\setminus \mathcal {N}):=\Delta$, i.e
there are at least $2$ components in $\{n_{1}, n_{2}\}$, if
\begin{equation*}
    i+j-k-l=0,
\end{equation*}
then
\begin{equation*}
    |\lambda_{i}+\lambda_{j}-\lambda_{k}-\lambda_{l}|=|i^2+j^2-k^2-l^2|\geqslant \frac{|j|}{N}.
\end{equation*}
\end{lemma}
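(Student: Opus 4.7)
The plan is to reduce the bound to a short integer inequality through the algebraic identity
\begin{equation*}
i^2+j^2-k^2-l^2 = -2(i-k)(j-k),
\end{equation*}
which follows from $i+j=k+l$ by substituting $l=i+j-k$ into $ij-kl$. Since $(i,j,k,l)\notin\mathcal N$, neither $i-k$ nor $j-k$ vanishes (otherwise $\{i,j\}=\{k,l\}$), so both are integers of absolute value at least $1$. The absolute value of the expression and the set $\Delta$ are invariant under the symmetries $i\leftrightarrow j$, $k\leftrightarrow l$ and $(i,j)\leftrightarrow(k,l)$, so I may assume without loss of generality that $|j|=\max\{|i|,|j|,|k|,|l|\}$.

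If $|j|\le N$, then $|j|/N\le 1\le 2\,|i-k|\,|j-k|$ and the estimate is immediate. Otherwise $|j|>N$; since each of $n_1,n_2$ has modulus at most $N$, this forces $j\notin\{n_1,n_2\}$. By hypothesis at least two of $\{i,j,k,l\}$ lie in $\{n_1,n_2\}$, hence at least two of $\{i,k,l\}$ do, so at least one of $k,l$ must belong to $\{n_1,n_2\}$. Using the $k\leftrightarrow l$ symmetry (the identity has the twin form $-2(i-l)(j-l)$, obtained by solving for $k=i+j-l$ instead), I may arrange $k\in\{n_1,n_2\}$, so that $|k|\le N$.

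Consequently $|j-k|\ge|j|-N$ and $|i-k|\ge 1$, giving
\begin{equation*}
|i^2+j^2-k^2-l^2|=2|i-k||j-k|\ge 2(|j|-N).
\end{equation*}
The required bound $2(|j|-N)\ge|j|/N$ rearranges to $|j|(2N-1)\ge 2N^2$, which holds for every integer $|j|\ge N+1$ since $(N+1)(2N-1)=2N^2+N-1\ge 2N^2$ when $N\ge 1$. The only real subtlety is the combinatorial bookkeeping: I must invoke the symmetries to place a tangent index in the ``$k$'' slot so that $|j-k|$ inherits the largeness of $|j|$; once this is done, the remaining inequality is elementary.
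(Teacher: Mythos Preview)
Your proof is correct and follows essentially the same route as the paper's. The paper uses the equivalent factorization $i^2+j^2-k^2-l^2=2(j-k)(j-l)$ (obtained by eliminating $i$ rather than $l$), which is already symmetric in $k$ and $l$ and so dispenses with both the $k\leftrightarrow l$ swap and the WLOG reduction to $|j|=\max$; it also splits cases at the threshold $|j|\le 2N$ instead of $|j|\le N$, but the argument is otherwise identical.
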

\begin{proof}
As $i+j-k-l=0$, then by direct calculation, we obtain
\begin{equation*}
    i^2+j^2-k^2-l^2=2(j-k)(j-l).
\end{equation*}
Observe that $j\neq k, l$. Hence, if $|j|\leqslant 2N$, then
\begin{equation*}
    |2(j-k)(j-l)|\geqslant 2 \geqslant \frac{|j|}{N};
\end{equation*}
if $|j|> 2N$, at least one of $k, l$ being in $\{n_{1}, n_{2}\}$, then
\begin{equation*}
    |2(j-k)(j-l)|\geqslant 2(|j|-N) \geqslant \frac{|j|}{N}.
\end{equation*}
\end{proof}

\begin{lemma}\label{Birkhoff normal form theorem 1}
Given $\{n_1,n_2\}\in \mathcal {J}$, there exists a real analytic, symplectic change of coordinates $\Gamma$ in a
neighbourhood of the origin in $\ell^{a,p}$ which transforms hamiltonian $H= \Lambda + G $ into Birkhoff normal form up to order four. That is
\begin{equation*}
    H \circ \Gamma= \Lambda + \bar{G} + \hat{G} + K,
\end{equation*}
where $X_{\bar{G}}$, $X_{\hat{G}}$ and $X_{K}$ are real analytic vector fields from a neighbourhood of origin in $\ell^{a,p}$ to $\ell^{a,p-1}$,
\begin{equation*}
    \bar{G}=\frac{1}{4\pi}\sum\limits_{i,j\in \mathbb{Z}_{\ast}}j|q_{i}|^2|q_{j}|^2,
\end{equation*}
and
\begin{equation*}
     \|\hat{G}\|_{a, p-1} =O(\|q\|^{4}_{a,p}),\ \ \ \|K\|_{a, p-1} =O(\|q\|^{6}_{a,p}).
\end{equation*}
Moreover, $K(q,\bar{q})\in \mathcal {A}_{n_1,n_2}$.
\end{lemma}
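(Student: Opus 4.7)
The plan is to perform one step of partial Birkhoff normal form by a Lie-series change of coordinates $\Gamma$, realized as the time-$1$ flow of an auxiliary quartic generating Hamiltonian $F$. From the explicit expansion $G=\frac{1}{4\pi}\sum_{i+j=k+l}j\,q_iq_j\bar q_k\bar q_l$ I would partition the quartic index tuples into three disjoint classes: (a) the resonant set $\mathcal{N}$, where $\{i,j\}=\{k,l\}$; (b) the partially tangential non-resonant set $\Delta\setminus\mathcal{N}$, consisting of tuples with at least two indices in $\{n_1,n_2\}$ but not lying in $\mathcal{N}$; and (c) the remainder $\Delta_3$, consisting of tuples with three or more indices outside $\{n_1,n_2\}$. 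Only class (b) will be eliminated by $\Gamma$; class (a) will be collected into the action normal form $\bar G$, and class (c) will be retained as $\hat G$.

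I would then define $F$ as the unique quartic polynomial supported on class (b) whose Poisson bracket with $\Lambda$ cancels $G|_{\Delta\setminus\mathcal{N}}$. Because $\{q_iq_j\bar q_k\bar q_l,\Lambda\}=\mi(i^2+j^2-k^2-l^2)\,q_iq_j\bar q_k\bar q_l$, each coefficient must satisfy $|F_{ijkl}|=\frac{|j|/(4\pi)}{|i^2+j^2-k^2-l^2|}\leqslant\frac{N}{4\pi}$, the last inequality being exactly Lemma \ref{small division condition1}. This uniform bound, together with the Banach-algebra property of $\ell^{a,p}$ (Lemma~2.1) applied as in the proof of Lemma \ref{algebra lemma}, implies that $X_F$ is a real analytic map from a neighbourhood of the origin in $\ell^{a,p}$ into $\ell^{a,p-1}$ with $\|X_F\|_{a,p-1}=O(\|q\|^3_{a,p})$. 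Hence the time-$1$ map $\Gamma$ of $X_F$ is well defined, symplectic, real analytic, and close to the identity on a smaller neighbourhood of the origin. Since dividing a coefficient by the nonzero scalar $\mi(i^2+j^2-k^2-l^2)$ does not alter the index identities $\sum_{n}(\alpha_n-\beta_n)n=0$ and $\sum_{n}(\alpha_n-\beta_n)=0$, the generating function $F$ inherits both the compact form and the gauge invariance from $G$.

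Finally, I would expand via the Lie series $H\circ\Gamma=\Lambda+G+\{\Lambda+G,F\}+\tfrac12\{\{\Lambda+G,F\},F\}+\cdots$ and observe that by construction $G+\{\Lambda,F\}=G|_{\mathcal{N}}+G|_{\Delta_3}$. The resonant part $G|_{\mathcal{N}}$ consists of monomials $q_iq_j\bar q_k\bar q_l$ with $\{i,j\}=\{k,l\}$; summing over the two pairings $(k,l)=(i,j)$ and $(k,l)=(j,i)$ regroups these into the action-only expression $\bar G$. Setting $\hat G=G|_{\Delta_3}$, which is manifestly quartic and of size $O(\|q\|^4_{a,p})$, the remainder $K$ collects the higher Lie brackets $\{G,F\}$, $\{\{\Lambda,F\},F\},\ldots$; each is at least sextic in $q$, and iterating the previous-paragraph estimates together with Lemma \ref{algebra lemma} yields $\|K\|_{a,p-1}=O(\|q\|^6_{a,p})$. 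Membership $K\in\mathcal{A}_{n_1,n_2}$ follows at once from Lemmas \ref{compact form} and \ref{gauge invariant property}, since $\Lambda,G,F$ all lie in this class and it is stable under Poisson brackets. The main technical obstacle I anticipate is the regularity statement for $X_F$: the small-divisor gain $|j|^{-1}$ in Lemma \ref{small division condition1} must exactly cancel the derivative loss $|j|$ hidden in the coefficient of $G$ in order for $X_F$ to have no worse regularity than $X_G$, and this cancellation is precisely what makes the unbounded cubic perturbation amenable to normal-form surgery.
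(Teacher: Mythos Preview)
Your approach is essentially the paper's: define a quartic generating function $F$ supported on the non-resonant tangential set $\Delta$, use Lemma~\ref{small division condition1} to bound its coefficients, take $\Gamma=X_F^1$, and sort the Lie series into $\bar G$, $\hat G$, $K$. The structural part of your argument is correct.

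There is one genuine understatement that, as written, leaves a gap. You say the bound $|F_{ijkl}|\leqslant N/(4\pi)$ yields $X_F:\ell^{a,p}\to\ell^{a,p-1}$, the same regularity as $X_G$. But that regularity is \emph{not} enough to conclude that the time-$1$ flow $\Gamma$ is a well-defined symplectic diffeomorphism of a neighbourhood of the origin in $\ell^{a,p}$: an unbounded vector field need not integrate to a flow on the same space. The point of the cancellation you correctly identify is stronger: since the divisor satisfies $|\lambda_i+\lambda_j-\lambda_k-\lambda_l|\geqslant |j|/N$, the factor $j$ in the numerator of $F_{ijkl}$ is completely absorbed, so $|F_{ijkl}|$ is uniformly bounded and
\[
\Bigl|\frac{\partial F}{\partial\bar q_l}\Bigr|\leqslant c\,(q*q*\bar q)_l,\qquad \|F_{\bar q}\|_{a,p}\leqslant c\|q\|_{a,p}^3.
\]
Thus $X_F$ is a \emph{bounded} vector field $\ell^{a,p}\to\ell^{a,p}$, strictly smoother than $X_G$, and this is exactly what guarantees that $\Gamma=X_F^1$ exists as a real analytic map close to the identity on $\ell^{a,p}$. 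Your final paragraph says the cancellation makes $X_F$ have ``no worse regularity than $X_G$''; it must in fact have one derivative \emph{better} regularity, and you should state and use this.
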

\begin{proof}
Define
\begin{equation*}
    F=\frac{1}{2}\sum\limits_{i+j-k-l=0}F_{ijkl}q_{i}q_{j}\bar{q}_{k}\bar{q}_{l}
\end{equation*}
with coefficients
\begin{equation*}
     \mi F_{ijkl}=
     \left\{\begin{aligned}
             \frac{-jG_{ijkl}}{\lambda_{i}+\lambda_{j}-\lambda_{k}-\lambda_{l}}, &\ \ \ & (i,j,k,l)\in
             \Delta,\\
             0,  &\ \ \ &\text{otherwise}.
    \end{aligned} \right.
\end{equation*}
Then we have
\begin{equation*}
\begin{split}
    \{\Lambda, F\} + G &= \frac{1}{2}\sum\limits_{i+j-k-l=0}(jG_{ijkl}+\mi (\lambda_{i}+\lambda_{j}-\lambda_{k}-\lambda_{l})F_{ijkl})q_{i}q_{j}\bar{q}_{k}\bar{q}_{l}\\
                       &= \frac{1}{2}\sum\limits_{\substack{i+j-k-l=0\\(i,j,k,l)\in (\Delta_{0}\cap\mathcal {N})\cup(\Delta_{2}\cap\mathcal {N})}}jG_{ijkl}q_{i}q_{j}\bar{q}_{k}\bar{q}_{l} +
                       \frac{1}{2}\sum\limits_{\substack{i+j-k-l=0\\(i,j,k,l)\in \Delta_{3}}}jG_{ijkl}q_{i}q_{j}\bar{q}_{k}\bar{q}_{l}\\
                       &= \frac{1}{4\pi}\sum\limits_{i,j\in \mathbb{Z}_{\ast}}j|q_{i}|^2|q_{j}|^2 + \hat{G}\\
                       &= \bar{G} + \hat{G},
\end{split}
\end{equation*}
where $\{\cdot, \cdot\}$ is a Poisson bracket with respect to the symplectic structure $-\mi\sum_{j\in \mathbb{Z}_{\ast}}\dif q_{j}\wedge \dif \bar{q}_{j}$.
Letting $\Gamma= X_{F}^{1}$, then
\begin{equation*}
    \begin{split}
    H\circ \Gamma=&\left.H \circ X_{F}^{t}\right|_{t=1}\\
                 =&H+\{H,F\}+\int^1_0 (1-t)\{\{H,F\},F\}\circ X_{F}^{t}\,dt\\
                 =&\Lambda+\{\Lambda,F\}+G +\{G, F\}+\int^1_0 (1-t)\{\{H,F\},F\}\circ X_{F}^{t}\,dt\\
                 =&\Lambda+\bar{G}+ \hat{G} + K,\\
    \end{split}
\end{equation*}
where
\begin{equation*}
\begin{split}
    K=&\{G, F\}+\frac{1}{2!}\{\{\Lambda, F\}, F\}+\frac{1}{2!}\{\{G, F\}, F\}\\
      &+ \cdots + \frac{1}{n!}\{\cdots\{\Lambda, \underbrace{F\}\cdots, F}\limits_{n}\} + \frac{1}{n!}\{\cdots\{G, \underbrace{F\}\cdots, F}\limits_{n}\}+ \cdots.
\end{split}
\end{equation*}

Now we prove the analyticity of the preceding transformation $\Gamma$. First, note that when $(i, j, k, l) \in \Delta$, we have $|\lambda_{i}+\lambda_{j}-\lambda_{k}-\lambda_{l}|\geqslant
\frac{|j|}{N}$. So we know
\begin{equation*}
    \left|\frac{\partial F}{\partial \bar{q}_{l}}\right| \leqslant \sum\limits_{i+j-k=l}
    \left|\frac{jG_{ijkl}}{\lambda_{i}+\lambda_{j}-\lambda_{k}-\lambda_{l}}\right||q_iq_j\bar{q}_k|
    \leqslant c\sum\limits_{i+j-k=l}|q_iq_j\bar{q}_k|
    =c(q\ast q \ast \bar{q})_{l}.
\end{equation*}
Hence, by Lemma \ref{algebra lemma},
\begin{equation*}
    \|F_{\bar{q}}\|_{a,p} \leqslant c\|q* q * \bar{q}\|_{a,p} \leqslant c\|q\|_{a,p}^3.
\end{equation*}
The analyticity of $F_{\bar{q}}$ then follows from that of each of its component and its local boundedness.
Moreover, it is clear that $\|K\|_{a,p-1}\leqslant c\|q\|^6_{a,q}$. The analogue claims for $X_{\bar{G}}$ and $X_{\hat{G}}$ are obvious.

We note that $G$ and $F$ have  compact forms. Hence, by Lemma \ref{compact form}, $\{G,F\}$ has a compact form. Since $\Lambda$ is already in a compact form, repeating
applications of Lemma \ref{compact form} show that all terms of $K$ have compact forms, so does $K$. Similarly, using Lemma \ref{gauge invariant property}, we can get that $K$ has the gauge invariant
property. Hence
$K\in \mathcal {A}_{n_1,n_2}$.
\end{proof}
Now our Hamiltonian is $ \widetilde{H} = \Lambda + \bar{G} + \hat{G} + K$. Introduce the symplectic polar and complex coordinates by setting
\begin{equation*}
     \left\{\begin{aligned}
             &q_{n_j}=\sqrt{I_{j}+\xi_{j}}\me ^{\mi \theta_{j}}, &\ \ \ &j=1,2;\\
             &q_{j}=z_{j},  &\ \ \ &j\in \mathbb{Z}_{1}=\mathbb{Z}_{\ast}\setminus \{n_1,n_2\},
    \end{aligned} \right.
\end{equation*}
where $n_1\neq n_2$, $\xi=\{\xi_1,\xi_2\}\in \mathbb{R}^2_{+}$. Then
\begin{equation*}
    \Lambda=\sum\limits_{1\leqslant j\leqslant 2}\lambda_{n_{j}}(I_{j}+\xi_{j})+ \sum\limits_{j\in \mathbb{Z}_{1}}\lambda_{j}z_{j}\bar{z}_{j},
\end{equation*}
\begin{equation*}
\begin{split}
    4\pi\bar{G}=&\sum\limits_{1\leqslant i,j\leqslant 2}n_{j}(I_{i}+\xi_{i})(I_{j}+\xi_{j})+ \sum\limits_{1\leqslant i\leqslant 2, j\in \mathbb{Z}_{1}}j(I_{i}+\xi_{i})z_{j}\bar{z}_{j}\\
                &+\sum\limits_{1\leqslant j\leqslant 2, i\in \mathbb{Z}_{1}}n_j(I_{j}+\xi_{j})z_{i}\bar{z}_{i}
                +\sum\limits_{i,j \in \mathbb{Z}_{1}}jz_{i}\bar{z}_{i}z_{j}\bar{z}_{j},
\end{split}
\end{equation*}
where $(\theta, I)\in \mathbb{T}^2\times \mathbb{R}^2$ be standard angle-action variables in the $(q_{n_{1}}, q_{n_{2}}, \bar{q}_{n_{1}}, \bar{q}_{n_{2}})$ space around $\xi$. Then we get
\begin{equation*}
    -\mi\sum_{j\in \mathbb{Z}_{\ast}}\dif q_{j}\wedge \dif \bar{q}_{j}=\sum_{1\leqslant j\leqslant 2}\dif \theta_{j}\wedge \dif I_{j}-\mi\sum_{j\in \mathbb{Z}_{1}}\dif z_{j}\wedge \dif \bar{z}_{j},
\end{equation*}
and Possion bracket
\begin{equation*}
    \{F,G\}=\sum\limits_{1\leqslant j\leqslant 2}\frac{\partial F}{\partial \theta_j}\frac{\partial G}{\partial I_j}-\frac{\partial F}{\partial I_j}\frac{\partial G}{\partial \theta_j}
    -\mi\sum\limits_{j\in \mathbb{Z}_{1}}\frac{\partial F}{\partial z_j}\frac{\partial G}{\partial \bar{z}_j}-\frac{\partial F}{\partial \bar{z}_j}\frac{\partial G}{\partial z_j}.
\end{equation*}
The new Hamiltonian, still denoted by $\widetilde{H}$, up to a constant depending only on $\xi$, is given by
\begin{equation*}
    \widetilde{H}= N+ P =\langle \tilde{\omega}(\xi), I \rangle+ \sum\limits_{j\in \mathbb{Z}_{1}}\tilde{\Omega}_{j}(\xi)z_{j}\bar{z}_{j} + \widetilde{P}(I,\theta, z, \bar{z},\xi),
\end{equation*}
where $\tilde{\omega}(\xi)=(\tilde{\omega}_{1}(\xi), \tilde{\omega}_{2}(\xi))$ with
\begin{equation*}
    \tilde{\omega}_{1}(\xi)=\lambda_{n_{1}}+ \frac{1}{4\pi}((n_{1}+n_{1})\xi_{1}+ (n_{1}+n_{2})\xi_{2}),
\end{equation*}
\begin{equation*}
    \tilde{\omega}_{2}(\xi)=\lambda_{n_{2}}+ \frac{1}{4\pi}((n_{2}+n_{1})\xi_{1}+ (n_{2}+n_{2})\xi_{2}),
\end{equation*}
\begin{equation*}
    \tilde{\Omega}_{j}(\xi)=\lambda_{j}+ \frac{1}{4\pi}((n_{1}+j)\xi_{1}+ (n_{2}+j)\xi_{2}),\ \ \ j\in \mathbb{Z}_{1}.
\end{equation*}
At the same time, the perturbation is
\begin{equation}\label{perturbation}
    \begin{split}
    \widetilde{P}= &K + O(|I|^2) + O(|I|^2|\xi|)+O(|I|\sum\limits_{j\in \mathbb{Z}_{1}}|j||z_j|^2)+ O(|I|\sum\limits_{j\in \mathbb{Z}_{1}}|z_j|^2)
    + O(|I||\xi|\sum\limits_{j\in \mathbb{Z}_{1}}|z_j|^2)\\
    &+ O(\sum\limits_{i,j\in \mathbb{Z}_{1}}|z_i|^2|j||z_j|^2)
    + O(|\xi|^{\frac{1}{2}}\sum\limits_{i=1}^{2}\sum\limits_{i+j-k=n_{i}}|z_{i}||jz_{j}||\bar{z}_{k}|)
    + O(\sum\limits_{i+j-k-l=0}|z_{i}||jz_{j}||\bar{z}_{k}||\bar{z}_{l}|).
    \end{split}
\end{equation}

\begin{lemma}If $F(q,\bar{q})\in \mathcal {A}_{n_1,n_2}$, then after above symplectic polar and complex coordinates transform, $F$
is still in $\mathcal {A}_{n_1,n_2}$.
\end{lemma}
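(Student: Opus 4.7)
The plan is to verify the claim monomial by monomial: since membership in $\mathcal A_{n_1,n_2}$ is a linear condition on Fourier--Taylor coefficients, it suffices to take an arbitrary monomial $q^\alpha \bar q^\beta$ appearing in the expansion of $F$, apply the substitution, and check that every resulting term obeys the compact form and gauge invariance relations in the new variables $(\theta, I, z, \bar z)$.

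The main computation is as follows. Split the multi-indices as $\alpha = (\alpha_{n_1}, \alpha_{n_2}, \alpha')$ and $\beta = (\beta_{n_1}, \beta_{n_2}, \beta')$, where $\alpha', \beta'$ are indexed over $j \in \mathbb Z_1$, and set $k_j := \alpha_{n_j} - \beta_{n_j}$. The substitution yields
\begin{equation*}
  q^\alpha \bar q^\beta \;=\; \prod_{j=1}^{2}(I_j+\xi_j)^{(\alpha_{n_j}+\beta_{n_j})/2}\, e^{\mi k_j \theta_j}\;\cdot\; z^{\alpha'}\bar z^{\beta'}.
\end{equation*}
Expanding each factor $(I_j+\xi_j)^{m_j/2}$ by the generalized binomial series around $I_j = 0$ (which converges in a neighbourhood of the origin since $\xi_j > 0$) produces a convergent power series in $I_j$ with $\xi$-dependent coefficients. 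The crucial observation is that this expansion affects only the $I^l$ slot: the Fourier index $k = (k_1,k_2)$ and the exterior multi-indices $\alpha', \beta'$ are left untouched.

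Now invoke the two constraints inherited from $F \in \mathcal A_{n_1,n_2}$, namely $\sum_{n \in \mathbb Z_\ast}(\alpha_n - \beta_n)\,n = 0$ and $\sum_{n \in \mathbb Z_\ast}(\alpha_n - \beta_n) = 0$. Isolating the indices $n = n_1, n_2$ and substituting $k_j = \alpha_{n_j} - \beta_{n_j}$, these rewrite as
\begin{equation*}
  k_1 n_1 + k_2 n_2 + \sum_{n \in \mathbb Z_1}(\alpha'_n - \beta'_n)\, n = 0, \qquad k_1 + k_2 + \sum_{n \in \mathbb Z_1}(\alpha'_n - \beta'_n) = 0,
\end{equation*}
which are exactly the compact form and gauge invariance conditions for the transformed expansion in $(\theta, I, z, \bar z)$. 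The only mild obstacle is the appearance of half-integer exponents $m_j/2$ whenever $k_j$ is odd; this is harmless because $\xi_j > 0$ makes $(I_j+\xi_j)^{m_j/2}$ analytic in $I_j$ on a neighbourhood of $0$, and since the binomial expansion introduces no new $\theta$ or $z, \bar z$ factors it cannot disturb the linear relations on $(k, \alpha', \beta')$. Summing over the original monomials of $F$ therefore yields a function in $\mathcal A_{n_1,n_2}$, as desired.
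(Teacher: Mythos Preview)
Your proof is correct and follows essentially the same approach as the paper: verify monomial by monomial that the substitution sends the original constraints $\sum_n(\alpha_n-\beta_n)n=0$ and $\sum_n(\alpha_n-\beta_n)=0$ to the compact form and gauge invariance relations in the $(\theta,I,z,\bar z)$ variables, with Fourier index $k_j=\alpha_{n_j}-\beta_{n_j}$. The paper in fact treats only one illustrative term ($q_{n_1}q_j\bar q_k\bar q_l$) and declares the remaining cases analogous, whereas you carry out the general computation in full.
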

\begin{proof}
Suppose $F_{k, \alpha, \beta} \neq 0$, when $\sum_{n}(\alpha_n-\beta_n)n=0$ and $\sum_{n}(\alpha_n-\beta_n)=0$. Without of the loss of
generality, we consider term
\begin{equation*}
   F_{n_{1},j,k.l}q_{n_{1}}q_{j}\bar{q}_{k}\bar{q}_{l}, \ \ \ \text{with}\ \  n_{1}+j-k-l=0, \ \ j, k, l \in \mathbb{Z}_{1}.
\end{equation*}
By the symplectic polar and complex coordinates
transform, it becomes
\begin{equation*}
   F_{n_{1},j,k.l}\sqrt{I_{1}+\xi_{1}}e^{i\theta_{1}}z_{j}\bar{z}_{k}\bar{z}_{l}.
\end{equation*}
Obviously, it satisfies
\begin{equation*}
   \begin{split}
          &k_1n_1+k_2n_2+\sum\limits_{n}(\alpha_n-\beta_n)n=n_{1}+j-k-l=0,\\
          &k_1+k_2+\sum\limits_{n}(\alpha_n-\beta_n)=1+1-1-1=0.
   \end{split}
\end{equation*}
The argument of the others terms is analogous to above and we omit it, then we complete the lemma.
\end{proof}
Now, let $\varepsilon > 0$ be sufficiently small. Rescaling $\xi_{j}$ by $\varepsilon^4 \xi_{j}, j=1, 2$, $z, \bar{z}$ by $\varepsilon^3 z, \varepsilon^3 \bar{z}$,
and $I$ by $\varepsilon^6 I$, one obtains the rescaled Hamiltonian
\begin{equation}\label{Hamiltonian3}
\begin{split}
    H(I,\theta, z, \bar{z},\xi)&=\varepsilon^{-10}\widetilde{H}(\varepsilon^6I,\theta, \varepsilon^3 z, \varepsilon^3 \bar{z},\varepsilon^4 \xi)\\
    &=\langle \omega^{\ast}(\xi), I \rangle+ \sum\limits_{j\in \mathbb{Z}_{1}}\Omega_{j}^{\ast}(\xi)z_{j}\bar{z}_{j} +  \varepsilon P^{\ast}(I,\theta, z, \bar{z},\xi),
\end{split}
\end{equation}
where $\omega^{\ast}(\xi)=(\omega_{1}^{\ast}(\xi),\omega_{2}^{\ast}(\xi))$ with
\begin{equation}\label{normal frequence1}
    \omega_{1}^{\ast}(\xi)=\varepsilon^{-4}\lambda_{n_{1}}+ \frac{1}{4\pi}((n_{1}+n_{1})\xi_{1}+ (n_{1}+n_{2})\xi_{2}),
\end{equation}
\begin{equation}\label{normal frequence2}
    \omega_{2}^{\ast}(\xi)=\varepsilon^{-4}\lambda_{n_{2}}+ \frac{1}{4\pi}((n_{2}+n_{1})\xi_{1}+ (n_{2}+n_{2})\xi_{2}),
\end{equation}
\begin{equation}\label{tangent frequence}
    \Omega_{j}^{\ast}(\xi)=\varepsilon^{-4}\lambda_{j}+ \frac{1}{2\pi}((n_{1}+j)\xi_{1}+ (n_{2}+j)\xi_{2}),\ \ j\in \mathbb{Z}_{1}.
\end{equation}
The perturbation is
\begin{equation}\label{perturbation1}
    P^{\ast}(I,\theta, z, \bar{z},\xi)=\varepsilon^{-11}\widetilde{P}(\varepsilon^6 I,\theta, \varepsilon^3 z, \varepsilon^3 \bar{z},\varepsilon^4 \xi).
\end{equation}

Note that Eq. \eqref{DNLS4} has a conservation $\int_{0}^{2\pi} |u|^2\,\dif\,x= \sum_{j\neq 0} |q_j|^2=c,$
i.e.
\begin{equation*}
    |q_{n_1}|^2+|q_{n_2}|^2+\sum\limits_{j\in \mathbb{Z}_1} |q_j|^2=c.
\end{equation*}
The above rescaling yields that
\begin{equation*}
    \varepsilon^2 (I_1+ I_2)+ (\xi_1+ \xi_2)+\varepsilon^2 \sum\limits_{j\in \mathbb{Z}_1} |z_j|^2=c,
\end{equation*}
that is
\begin{equation*}
    \xi_1+ \xi_2=c-\varepsilon^2(I_1+ I_2+\sum\limits_{j\in \mathbb{Z}_1} |z_j|^2)=c+ O(\varepsilon^2).
\end{equation*}
Let $\omega(\xi)=(\omega_{1}(\xi),\omega_{2}(\xi))$, $\Omega= (\Omega_{j})_{j\in\mathbb{Z}_{1}}$, where
\begin{equation}
    \omega_{1}(\xi)=\varepsilon^{-4}n_{1}^2+ \frac{1}{4\pi}(n_1+n_2)c+\frac{1}{4\pi}(n_1-n_2)\xi_{1},
\end{equation}
\begin{equation}
    \omega_{2}(\xi)=\varepsilon^{-4}n_{2}^2+ \frac{1}{4\pi}(n_1+n_2)c+\frac{1}{4\pi}(n_2-n_1)\xi_{2},
\end{equation}
\begin{equation}
    \Omega_{j}(\xi)=\varepsilon^{-4}j^2+ \frac{1}{4\pi}(cj + n_{1}\xi_{1}+ n_{2}\xi_{2}).
\end{equation}
We can write \eqref{Hamiltonian3} as
\begin{equation}\label{Hamiltonian4}
    H(I,\theta, z, \bar{z},\xi)
    =\langle \omega(\xi), I \rangle+ \sum\limits_{j\in \mathbb{Z}_{1}}\Omega_{j}(\xi)z_{j}\bar{z}_{j} + P(I,\theta, z, \bar{z},\xi),
\end{equation}
where
\begin{equation}\label{perturbation 1}
    P=\varepsilon P^{*}-\varepsilon^2\frac{1}{2\pi}(n_1+n_2)(I_1+ I_2+\sum\limits_{j\in \mathbb{Z}_1} |z_j|^2)^2.
\end{equation}

\section[]{Some Conditions}
Consider the phase space
\begin{equation*}
    \mathscr{P}^{a,p}=\mathbb{T}^2 \times \mathbb{R}^2 \times \ell^{a,p} \times \ell^{a,p}
\end{equation*}
with the coordinates $(\theta, I, z, \bar{z})$. We denote $\mathscr{P}^{a,p}_{\mathbb{C}}$ and $\ell^{a,p}_{\mathbb{C}}$ the complexification of phase space $\mathscr{P}^{a,p}$
and $\ell^{a,p}$ respectively.
Define a neighborhood of $\mathbb{T}^2_0= \mathbb{T}^2 \times \{I=0\} \times \{z=0\} \times \{\bar{z}=0\}$ by
\begin{equation*}
\begin{split}
    D(s,r)&=\left\{(\theta,I,z,\bar{z}): |\text {Im}\ \theta|<s, |I|<r^2, \|z\|_{a,p}<r, \|\bar{z}\|_{a,p}<r \right\}\\
          &\subset \mathbb{C}^2 \times \mathbb{C}^2 \times \ell^{a,p}_{\mathbb{C}} \times \ell^{a,p}_{\mathbb{C}}=\mathscr{P}^{a,p}_{\mathbb{C}},
\end{split}
\end{equation*}
where $|\cdot|$ denotes the sup--norm for complex vectors,

Let $\mathcal {O}$ be a neighborhood of the origin in $\mathbb{R}^2_{+}$. Define the difference operator $\Delta_{\xi\zeta}$ in the variable
$\xi,\zeta \in \mathcal {O}$
\begin{equation*}
    \Delta_{\xi\zeta}=f(\cdot,\xi)-f(\cdot,\zeta).
\end{equation*}
We define the distance
\begin{equation*}
    |\Omega-\Omega'|_{-\delta, \mathcal {O}}=\sup\limits_{\xi \in \mathcal {O}}\sup\limits_{j\in \mathbb{Z}_1}j^{-\delta}|\Omega_{j}(\xi)-\Omega'_{j}(\xi)|,
\end{equation*}
and the Lipschitz semi-norm of frequencies $\omega$ and $\Omega$:
\begin{equation*}
    |\omega|^{\text{lip}}_{\mathcal {O}}=\sup\limits_{\substack{\xi,\zeta\in \mathcal {O},\\ \xi\neq \zeta}}\frac{|\Delta _{\xi,\zeta}\omega|}{|\xi-\zeta|},\ \ \ \
    |\Omega|^{\text{lip}}_{-\delta,\mathcal {O}}=\sup\limits_{\substack{\xi,\zeta\in \mathcal {O},\\ \xi\neq \zeta}}\sup\limits_{j\in \mathbb{Z}_1}\frac{j^{-\delta}|\Delta
    _{\xi,\zeta}\Omega_{j}|}{|\xi-\zeta|},
\end{equation*}
for any real number $\delta$. Denote $M=|\omega|^{\text{lip}}_{\mathcal {O}}+ |\Omega|^{\text{lip}}_{-\delta,\mathcal {O}}$.

For $l=(l_1,\ldots,l_n)\in \mathbb{Z}^{n}$, we define
\begin{equation*}
    |l|=\sum\limits_{j=1}^{n}|l_j|,\ \ |l|_{\delta}=\sum\limits_{j\neq 0}|j|^{\delta}|l_j|, \ \ \langle l\rangle_{\delta}=\max\{1, |\sum\limits_{j\neq 0}|j|l_j|\cdot |\sum\limits_{j\neq
    0}|j|^{\delta}|l_j|\},
\end{equation*}
and set
\begin{equation*}
    \Pi=\{(k, l) \neq 0, |l|\leqslant 2\}\subset \mathbb{Z}^2\times \mathbb{Z}^{\infty}.
\end{equation*}

We now prove some propositions of the Hamiltonian in the normal form \eqref{Hamiltonian3}.

\begin{proposition}\label{Nondegeneracy}The map $\xi \longmapsto \omega(\xi)$ is a homeomorphism from $\mathcal {O}$ to its image,
which is Lipschitz continuous and its inverse also. The functions
\begin{equation*}
    \xi \longrightarrow \frac{\Omega_{j}(\xi)}{|j|}
\end{equation*}
are uniformly Lipschtiz on $\mathcal {O}$ for $j\neq 0$. Morover, there exists a constant $m>0 $ such that for all $\xi \in \mathcal {O}$,
\begin{equation}\label{nonresonant condition}
    |\langle l,\Omega(\xi)\rangle|\geqslant m \langle l\rangle_{1}, \ \ \forall 1\leqslant |l|\leqslant 2.
\end{equation}
\end{proposition}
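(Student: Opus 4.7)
The three assertions in Proposition~\ref{Nondegeneracy} are of different natures and I would treat them in the order listed.

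The first two are immediate from the explicit formulas. Since $\omega$ is affine in $\xi$ with Jacobian
\begin{equation*}
    \frac{\partial \omega}{\partial \xi} = \frac{1}{4\pi}\begin{pmatrix} n_1-n_2 & 0 \\ 0 & n_2-n_1 \end{pmatrix},
\end{equation*}
whose determinant $-(n_1-n_2)^2/(16\pi^2) = -1/\pi^2$ is nonzero by the hypothesis $|n_1-n_2|=4$, the map $\xi\mapsto\omega(\xi)$ is a global affine diffeomorphism with Lipschitz constants $1/\pi$ and $\pi$ in the two directions. For the uniform Lipschitz bound on $\Omega_j/|j|$, only $(n_1\xi_1+n_2\xi_2)/(4\pi|j|)$ depends on $\xi$; its $\xi$-gradient is bounded by $N/(4\pi|j|)\leq N/(4\pi)$, uniformly in $j\in\mathbb Z_1$.

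The real content is the Mel'nikov-type inequality~\eqref{nonresonant condition}, which I would attack by expanding
\begin{equation*}
    \langle l,\Omega(\xi)\rangle
    = \varepsilon^{-4}\sum_j l_j j^2
      + \frac{c}{4\pi}\sum_j l_j j
      + \frac{n_1\xi_1+n_2\xi_2}{4\pi}\sum_j l_j,
\end{equation*}
and enumerating the admissible $l$ with $|l|\in\{1,2\}$ supported on $\mathbb Z_1$, namely $\pm e_j$, $\pm 2e_j$, $\pm(e_i+e_j)$, and $\pm(e_i-e_j)$ with $i\neq j$. In all but one case the principal coefficient $\sum_j l_j j^2$ is a nonzero integer. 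Factoring common linear factors (for instance writing $\langle e_i-e_j,\Omega\rangle=(i-j)[\varepsilon^{-4}(i+j)+c/(4\pi)]$) and using the elementary inequality $(|i|+|j|)^2\leq 2(i^2+j^2)$, one finds that for $\varepsilon$ small enough relative to $|c|$ and the diameter of $\mathcal O$, the $\varepsilon^{-4}$ piece dominates the subleading terms and gives $|\langle l,\Omega\rangle|\geq (\varepsilon^{-4}/2)\langle l\rangle_1$.

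The borderline is the resonant pair $l=\pm(e_i-e_{-i})$, where $\sum_j l_j j^2=0$ and $\sum_j l_j=0$, so that $\langle l,\Omega\rangle=\pm ci/(2\pi)$. The saving observation is that this same cancellation also collapses the weight: $\sum_j |j|l_j=|i|-|i|=0$, hence $\langle l\rangle_1=\max\{1,0\}=1$; and the mass-conservation identity $\xi_1+\xi_2=c+O(\varepsilon^2)$ together with $\mathcal O\subset\mathbb R^2_+$ keeps $c$ bounded below by a positive constant. One thus obtains $|\langle l,\Omega\rangle|\geq |c|/(2\pi)$ in this case, and combining the two scenarios one may take $m=\min\{\varepsilon^{-4}/4,|c|/(4\pi)\}$. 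The main obstacle is precisely locating this resonant corner in advance and recognising that the gauge/mass conservation makes $\langle l\rangle_1$ collapse to $1$ exactly where the principal symbol does, so the two degeneracies cancel rather than reinforce.
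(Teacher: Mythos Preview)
Your proposal is correct and follows the same case-by-case strategy as the paper: verify invertibility of the affine map $\omega$ via its Jacobian, bound the $\xi$-gradient of $\Omega_j/|j|$ directly, and then enumerate the admissible $l$ with $|l|\in\{1,2\}$ to prove \eqref{nonresonant condition} by letting the $\varepsilon^{-4}$ term dominate. The first two parts and the generic cases of the third coincide with the paper's argument essentially verbatim.

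Where your write-up goes further is the borderline $l=\pm(e_j-e_{-j})$. In the paper's Case~III the displayed bound is $|\langle l,\Omega\rangle|\geqslant|\varepsilon^{-4}-c/(4\pi)|\,\bigl||i|^2-|j|^2\bigr|$, which collapses to $0$ when $i=-j$ and therefore does not by itself deliver the required $\geqslant m\langle l\rangle_1=m$. You isolate this case explicitly, compute $\langle l,\Omega\rangle=\pm ci/(2\pi)$, observe that $\langle l\rangle_1$ simultaneously drops to $1$ because $\sum_n|n|l_n=0$, and close the estimate using $c\neq 0$. That is exactly the missing piece, and your remark that the two degeneracies (vanishing principal symbol and collapsing weight) cancel rather than compound is the correct insight. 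One small point to make explicit: the lower bound on $|c|$ requires either that $c$ is a fixed nonzero mass level or that $\xi_1+\xi_2$ is bounded away from $0$ on $\mathcal O$; this is implicit in the setup but should be stated.
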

\begin{proof}
Rewrite $\omega(\xi), \Omega(\xi)$ as $\omega(\xi)= \alpha +A\xi$, $\Omega(\xi)= \beta + B\xi$, where
\begin{equation*}
    \alpha= \left(
              \begin{array}{c}
                 \varepsilon^{-4}n_1^2 + \frac{1}{4\pi}(n_1+n_2)c \\
                 \varepsilon^{-4}n_2^2 + \frac{1}{4\pi}(n_1+n_2)c\\
              \end{array}
            \right), \ \
    A=\frac{1}{4\pi}\left(
        \begin{array}{cc}
          n_1-n_2 & 0 \\
          0 & n_2-n_1 \\
        \end{array}
      \right),
\end{equation*}
and
\begin{equation*}
    \beta= \left(
             \begin{array}{c}
               \vdots \\
               \varepsilon^{-4}j^2+ \frac{1}{4\pi} cj \\
               \vdots \\
             \end{array}
           \right)_{j\in \mathbb{Z}_1}, \ \ \
    B=\frac{1}{4\pi}\left(
        \begin{array}{cc}
          \vdots & \vdots\\
          n_1  & n_2\\
          \vdots & \vdots\\
        \end{array}
      \right).
\end{equation*}
Since $det A= -\frac{1}{4\pi}(n_1-n_2)^2 \neq 0$, we have that $\langle k, \omega(\xi)\rangle \not\equiv 0$ for $k \neq 0$ and the map $\xi \longmapsto \omega(\xi)$ is a Lipschitz homeomorphism.

For $j\in \mathbb{Z}_{1}$
\begin{equation*}
       \left|\frac{\Omega_{j}(\xi)}{|j|}-\frac{\Omega_{j}(\zeta)}{|j|}\right|=\frac{1}{4\pi}\left|\frac{n_{1}(\xi_{1}-\zeta_{1})}{|j|}+
       \frac{n_{2}(\xi_{2}-\zeta_{2})}{|j|}\right|
       \leqslant \frac{\max\{|n_1|, |n_{2}|\}}{4\pi}|\xi-\zeta|.
\end{equation*}

At last, we prove inequality \eqref{nonresonant condition} in three cases:

Case I:  $|l|=1$, suppose $l_{j}=1, l_{i}=0$, for $i\neq j$, then
\begin{equation*}
\begin{split}
    |\langle l,\Omega(\xi)\rangle|=|\Omega_{j}(\xi)|&=|\varepsilon^{-4}j^2+ \frac{1}{4\pi}(cj + n_{1}\xi_{1}+ n_{2}\xi_{2})|\\
    &\geqslant |\varepsilon^{-4}- \frac{1}{4\pi}c||j|^2.
\end{split}
\end{equation*}

Case II:  $|l|=2$ and $l_{i}=1,l_{j}=1$, for $i\neq j$, then
\begin{equation*}
\begin{split}
    |\langle l,\Omega(\xi)\rangle|&=|\Omega_{i}(\xi)+\Omega_{j}(\xi)|\geqslant (\varepsilon^{-4}- \frac{1}{4\pi}c)(|i|^2+|j|^2)\\
    &\geqslant \frac{1}{4}|\varepsilon^{-4}- \frac{1}{4\pi}c|(|i|+|j|)^2.
\end{split}
\end{equation*}

Case III:  $|l|=2$ and $l_{i}=1,l_{j}=-1$, for $i\neq j$, then
\begin{equation*}
\begin{split}
    |\langle l,\Omega(\xi)\rangle|&=|\Omega_{i}(\xi)-\Omega_{j}(\xi)|=|\varepsilon^{-4}(i^2-j^2)+\frac{1}{4\pi}c(i-j)| \\
    &\geqslant |\varepsilon^{-4}- \frac{1}{4\pi}c|||i|^2-|j|^2|.
\end{split}
\end{equation*}
for properly selected $m>0$ we can show
\begin{equation*}
    |\langle l,\Omega(\xi)\rangle|\geqslant m \langle l\rangle_{1}.
\end{equation*}
Thus we prove this proposition.
\end{proof}

Setting $\mathcal {O}_{0}$ consisting of all $\xi \in \mathcal {O}$ such that
\begin{equation*}
    \begin{aligned}
      &|\langle k, \omega(\xi)\rangle|\geqslant \frac{\gamma}{|k|^{\tau}}, \\
      &|\langle k, \omega(\xi)\rangle \pm \Omega_{j}(\xi)|\geqslant \frac{\gamma|j|^{1+\delta}}{|k|^{\tau}},\\
      &|\langle k, \omega(\xi)\rangle\pm \Omega_{i}(\xi)\pm\Omega_{j}(\xi)|\geqslant \frac{\gamma(|i|+|j|)(|i|^{\delta}+|j|^{\delta})}{|k|^{\tau}},\\
      &|\langle k, \omega(\xi)\rangle+\Omega_{i}(\xi)-\Omega_{j}(\xi)|\geqslant \frac{\gamma(|i|-|j|)(|i|^{\delta}+|j|^{\delta})}{|k|^{\tau}},\\
      &|\langle k, \omega(\xi)\rangle+\Omega_{j}(\xi)-\Omega_{-j}(\xi)|\geqslant \frac{\gamma|j|^{\delta}}{|k|^{\tau}},
    \end{aligned}
    \begin{aligned}
      & k\neq 0,\\
      & \\
      & k\neq 0,\\
      & \\
      & k\neq 0,\\
      & \\
      & k\neq 0\ \text{and}\ |i|\neq |j|,\\
      & \\
      & k\neq 0.
    \end{aligned}
\end{equation*}
\begin{proposition}
$\meas (\mathcal {O}\setminus \mathcal {O}_{0})= O(\gamma)$.
\end{proposition}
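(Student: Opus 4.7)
The plan is to decompose the complement $\mathcal{O}\setminus\mathcal{O}_{0}$ into a countable union of ``resonant slabs'' indexed by $k\in\mathbb{Z}^{2}\setminus\{0\}$ and by the relevant normal indices, one slab per violated inequality, and to estimate the Lebesgue measure of each slab. The key structural observation (already exploited in Proposition~\ref{Nondegeneracy}) is that each $\omega_i(\xi)$ and each $\Omega_j(\xi)$ is \emph{affine} in $\xi$, so every resonant combination $\langle k,\omega(\xi)\rangle+\langle l,\Omega(\xi)\rangle$ is affine in $\xi$. Moreover, a direct computation gives $\nabla_{\xi}\langle k,\omega(\xi)\rangle=\frac{n_{1}-n_{2}}{4\pi}(k_{1},-k_{2})$, of Euclidean norm $\geq |k|/\pi$ since $|n_{1}-n_{2}|=4\neq 0$. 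Thus each affine slab $|f(\xi)|<\eta$ meets $\mathcal{O}$ in a set of measure $\leq C\eta/|k|$ whenever the $\xi$-gradient of $f$ is comparable to $|k|$.

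I apply this template to the five inequalities in turn. For condition (1), each slab has measure $\leq C\gamma/|k|^{\tau+1}$, summable in $k$ for $\tau>1$. For (2) and (3), the additional contribution of $\pm\Omega_{j}$ (resp.\ $\pm\Omega_{i}\pm\Omega_{j}$ with matching signs) to the $\xi$-gradient is $O(\max(|n_{1}|,|n_{2}|))$, which is negligible compared to $|k|$ once $|k|$ is large, and only finitely many small $|k|$ need be absorbed into the constant. Furthermore the $\varepsilon^{-4}j^{2}$ term in $\Omega_{j}$ dominates, so the inequalities are automatic whenever $|j|^{2}$ (resp.\ $|i|^{2}+|j|^{2}$) exceeds a fixed polynomial in $|k|$; this truncates the sums over $i,j$ to polynomially many surviving indices.

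The main obstacle is conditions (4) and (5): the combinations $\Omega_{i}-\Omega_{j}$ and $\Omega_{j}-\Omega_{-j}$ are \emph{independent of $\xi$}, so the $\xi$-gradient of the small divisor coincides with that of $\langle k,\omega\rangle$, and the argument relies essentially on $k\neq 0$. Using $\Omega_{i}-\Omega_{j}=\varepsilon^{-4}(i-j)(i+j)+c(i-j)/(4\pi)$, the small-divisor in (4) can be violated only when $\varepsilon^{-4}|i-j||i+j|\lesssim |k|\sup_{\mathcal{O}}|\omega|$, which restricts the pairs $(i,j)$ with $|i|\neq|j|$ to cardinality polynomial in $|k|$; analogously, condition (5) restricts $j$ to a window of size $O(|k|)$.

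Summing all contributions gives a bound of the form
\begin{equation*}
\meas(\mathcal{O}\setminus\mathcal{O}_{0})\leq C\gamma\sum_{k\neq 0}|k|^{\beta-\tau-1}
\end{equation*}
for some fixed $\beta=\beta(\delta)$ accounting for the polynomial growth of the index counts together with the factors $|j|^{1+\delta}$, $(|i|+|j|)(|i|^{\delta}+|j|^{\delta})$, $(|i|-|j|)(|i|^{\delta}+|j|^{\delta})$, $|j|^{\delta}$ appearing on the right-hand sides. Choosing $\tau$ sufficiently large in terms of $\delta$ makes the series converge, yielding the required estimate $\meas(\mathcal{O}\setminus\mathcal{O}_{0})=O(\gamma)$.
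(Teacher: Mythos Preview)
Your overall strategy---decompose $\mathcal{O}\setminus\mathcal{O}_{0}$ into resonant slabs, exploit that each small divisor is affine in $\xi$, bound each slab by $C\gamma\cdot(\text{weight})/(|k|\cdot|\nabla_{\xi}f|)$, truncate the normal indices using the dominant $\varepsilon^{-4}j^{2}$ term, and sum---is correct and is exactly what the paper does (it defers the summation to Section~5.5). The gradient computation $\nabla_{\xi}\langle k,\omega\rangle=\tfrac{n_{1}-n_{2}}{4\pi}(k_{1},-k_{2})$ is also right, as is your observation that $\Omega_{i}-\Omega_{j}$ is $\xi$-independent.

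There is, however, a genuine gap in your treatment of conditions~(2) and~(3). You write that the $\Omega$-contribution to the gradient is $O(\max(|n_{1}|,|n_{2}|))$ and hence negligible for large $|k|$, while ``only finitely many small $|k|$ need be absorbed into the constant.'' But for those finitely many small $|k|$ you have \emph{not} checked that the gradient is nonzero; if it vanished, the corresponding slab could have full measure and the $O(\gamma)$ conclusion would fail. Concretely, the gradient of $\langle k,\omega\rangle+\langle l,\Omega\rangle$ is $Ak+B^{T}l$, and setting this to zero with $d:=\sum_{j}l_{j}$ forces $k_{1}=\tfrac{n_{1}}{n_{2}-n_{1}}d$, $k_{2}=-\tfrac{n_{2}}{n_{2}-n_{1}}d$. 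Ruling out nonzero integer solutions with $|d|\leqslant 2$ is precisely where the arithmetic hypothesis $\{n_{1},n_{2}\}\in\mathcal{J}$ (namely $n_{1}$ odd and $|n_{2}-n_{1}|=4$) is used: since $\gcd(n_{1},4)=\gcd(n_{2},4)=1$, integrality forces $4\mid d$, hence $d=0$ and $k=0$, which is excluded. Your argument never invokes this condition, so as written it would equally ``prove'' the proposition for index pairs $(n_{1},n_{2})$ for which it is false. Once you insert this nonresonance check (it is the entire content of the paper's proof of this proposition), the rest of your sketch goes through.
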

\begin{proof}
We consider the following nonreasonant conditions:
\begin{equation*}
    \begin{split}
      &\langle k, \omega(\xi)\rangle \not\equiv 0, \ \ \ \forall k\in \mathbb{Z}^2,\\
      &\langle k, \omega(\xi)\rangle+ \Omega_{j}(\xi)\not\equiv 0, \ \ \ \forall k\in \mathbb{Z}^2,\\
      &\langle k, \omega(\xi)\rangle+ \Omega_{i}(\xi)+\Omega_{j}(\xi)\not\equiv 0, \ \ \ \forall k\in \mathbb{Z}^2,\\
      &\langle k, \omega(\xi)\rangle+ \Omega_{i}(\xi)-\Omega_{j}(\xi)\not\equiv 0, \ \ \ |k|+||i|-|j||\neq 0.
    \end{split}
\end{equation*}
As we have prove the first nonresonant condition in Proposition \ref{Nondegeneracy}, we only consider the remaining three conditions. One has to check that $\langle \alpha, k\rangle + \langle \beta,
l\rangle \neq 0$ or
$Ak+ B^{T}l \neq 0$ for $1\leqslant |l| \leqslant 2$.
Suppose $Ak+ B^{T}l=0$, for some $k\in \mathbb{Z}^2$ and $1\leqslant |l| \leqslant 2$.
We let $d$ be the sum of at most two nonzero components of $l$. Then
\begin{equation*}
    k_1(n_1-n_2)+n_1d=0,\ \ \
    k_2(n_2-n_1)+n_2d=0,
\end{equation*}
that is
\begin{equation*}
    k_1=\frac{n_1}{n_2-n_1}d,\ \ \
    k_2=-\frac{n_2}{n_2-n_1}d.
\end{equation*}
As $n_{1}$ is odd and $|n_{2}-n_{1}|=4$, we have the following integer solutions
\begin{equation*}
    d=0, \ \ \  k_{1}=k_{2}=0.
\end{equation*}
In this case, $k=0$, ``$l$'' have one ``$1$'' and one ``$-1$''. Because the perturbation admit compact form with respect to $n_1, n_2$, that is
\begin{equation*}
    k_1n_1+k_2n_1+il_i+jl_j=0,
\end{equation*}
then we get $i=j$. It contradicts with our assumption $i\neq j$. Thus, we prove all nonresonant conditions.

The desired measure estimate of $\meas (\mathcal {O}\setminus \mathcal {O}_{0})$ then follows from the same argument as that in Section 5.5.
\end{proof}

The next proposition is concerned with the Hamiltonian vector fields.
The perturbation term $P$ is real analytic in the space coordinates and Lipschitz in the parameters.
Moreover, near $\mathbb{T}_0^2$, for each $\xi\in \mathcal {O}$ its Hamiltonian vector field $X_P=(P_{I}, -P_{\theta}, -iP_{\bar{z}}, iP_z)^T$
defines a real analytic map
\begin{equation*}
    X_P:D(s,r)\times \mathcal {O} \longrightarrow \mathscr{P}^{a,q}.
\end{equation*}
For $s,r >0$, we introduce weighted norm for
$W=(X,Y,U,V)\in \mathscr{P}^{a,q}_{\mathbb{C}}$,
\begin{equation*}
    \|W\|_{r,q}=|X|+\frac{1}{r^2}|Y|+\frac{1}{r}\|U\|_{a,q}+\frac{1}{r}\|V\|_{a,q}.
\end{equation*}
Furthermore, for a map $W: D(s,r) \times \mathcal {O} \longrightarrow \mathscr{P}^{a,q}_{\mathbb{C}}$, for
example, the Hamiltonian vector field $X_P$, we define the norms
\begin{eqnarray*}
  \|W\|_{r,q,D(s,r) \times \mathcal {O}}^{\text{sup}}&=&\sup\limits_{D(s,r)\times \mathcal {O}} \|W\|_{r,q},\\
  \|W\|_{r,q,D(s,r) \times \mathcal {O}}^{\text{lip}}&=&\sup\limits_{\substack{\xi,\zeta\in \mathcal {O},\\ \xi\neq \zeta}}\sup\limits_{D(s,r)}\frac{\|\Delta _{\xi,\zeta}W\|_{r,q}}{|\xi-\zeta|}.
\end{eqnarray*}

\begin{proposition}\label{Regularity of perturbation}(Regularity of perturbation)
There exists a neighborhood $D(s,r)$ of $\mathbb{T}^2_0$ in $\mathscr{P}^{a,p}_{\mathbb{C}}$ such that $P$ is defined on $D(s,r) \times \mathcal {O}$,
and its Hamiltonian vector field defines a map
\begin{equation*}
    X_P:D(s,r)\times \mathcal {O} \longrightarrow \mathscr{P}^{a,p-1},
\end{equation*}
Moreover, $X_{P}(\cdot, \xi)$ is real analytic on $D(s,r)$ for each $\xi \in \mathcal {O}$, and
$X_{P}(\omega, \cdot)$ is uniformly Lipschitz on $\mathcal {O}$ for each $\omega \in D(s,r)$.
\end{proposition}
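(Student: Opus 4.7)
The plan is to verify the three claims by a direct, term-by-term inspection of the explicit expansion \eqref{perturbation} of $\widetilde P$, coupled with the rescalings \eqref{perturbation1} and \eqref{perturbation 1}. The key tool is the Banach algebra property of $\ell^{a,p}$ (Lemma~2.1), used at both indices $p$ and $p-1$ (both exceeding $1/2$), combined with the bookkeeping identity that an extra factor of $|j|$ attached to $z_j$ converts a sequence in $\ell^{a,p}$ into one in $\ell^{a,p-1}$. Throughout, the parameter $\xi$ enters only through polynomials in $\xi_j$, through the radicals $\sqrt{I_j+\xi_j}$ (real analytic on the chosen polydisc, provided $\mathcal O$ is shrunk so that $\xi_j$ stays in a compact subset of $(0,\infty)$), and through the $\varepsilon$-prefactors, so the $\xi$-dependence is real analytic and hence automatically Lipschitz on $\mathcal O$.

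I would first dispose of the elementary contributions: the pieces $O(|I|^2)$ and $O(|I|^2|\xi|)$ are finite-dimensional polynomials whose $\partial_I$ and $\partial_\theta$ components lie in $\mathbb C^2$ and are bounded by a constant multiple of $r^2$ on $D(s,r)$. The bounded-in-normal-mode terms $O\bigl(|I|\sum|z_j|^2\bigr)$, $O\bigl(|I||\xi|\sum|z_j|^2\bigr)$, and $O\bigl(\sum|z_i|^2|z_j|^2\bigr)$ have $\bar z$-derivatives of the form $I z_l$ or $\bigl(\sum|z_i|^2\bigr)z_l$, which by Lemma~2.1 lie in $\ell^{a,p}\subset\ell^{a,p-1}$ with the expected cubic estimate. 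The remainder $K$ is handled directly by Lemma~\ref{Birkhoff normal form theorem 1}, which already supplies $\|K\|_{a,p-1}=O(\|q\|_{a,p}^6)$; after the smooth symplectic polar-complex change of variables and the rescaling, $X_K$ remains real analytic from $D(s,r)$ into $\ell^{a,p-1}$.

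The main obstacle, and the only place where the derivative loss $p\to p-1$ is actually used, lies in the three genuinely unbounded terms of \eqref{perturbation} that carry an explicit $|j|$, namely
\begin{equation*}
O\Bigl(|I|\sum_{j\in\mathbb Z_1}|j||z_j|^2\Bigr),\qquad O\Bigl(|\xi|^{1/2}\!\!\sum_{i+j-k=n_i}\!\!|z_i||jz_j||\bar z_k|\Bigr),\qquad O\Bigl(\sum_{i+j-k-l=0}|z_i||jz_j||\bar z_k||\bar z_l|\Bigr).
\end{equation*}
For each of these I would mimic the argument of Lemma~3.1: differentiate with respect to a $\bar z_l$, recognize the result as a convolution in which the sequence $(jz_j)_j$ belongs to $\ell^{a,p-1}$ while the remaining factors stay in $\ell^{a,p}$, and invoke Lemma~2.1 at the index $p-1$ to conclude that the corresponding component of $X_P$ lies in $\ell^{a,p-1}$ with norm controlled by $\|z\|_{a,p}^{3}$ (respectively $|\xi|^{1/2}\|z\|_{a,p}^{2}$ in the cubic case). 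Summing all contributions and normalising the four blocks of $(\theta,I,z,\bar z)$ by $1$, $r^2$, $r$, $r$ as in the definition of $\|\cdot\|_{r,p-1}$, one obtains a finite bound for $\|X_P\|^{\sup}_{r,p-1,D(s,r)\times\mathcal O}$. The Lipschitz estimate in $\xi$ follows by applying the very same convolution estimates to the difference quotients $\Delta_{\xi\zeta}X_P$, whose only surviving contributions come from the $\xi$-polynomial prefactors and from the radicals $\sqrt{I_j+\xi_j}$, each of which is explicitly Lipschitz in $\xi$ on $\mathcal O$.
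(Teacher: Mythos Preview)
Your proposal is correct and follows the same route as the paper: a direct term-by-term inspection of the explicit expansion \eqref{perturbation}, using the algebra property of $\ell^{a,p}$ together with the observation that a factor $|j|$ shifts $\ell^{a,p}$ to $\ell^{a,p-1}$, exactly as in Lemma~\ref{algebra lemma}. The paper's own proof is a two-line sketch (``from \eqref{perturbation1} it is obvious that $\|P_z\|_{a,p-1}\le c\varepsilon\|z\|_{a,p}$''), whereas you have spelled out the mechanism in detail; the only slip is that the quartic term in \eqref{perturbation} actually carries a $|j|$, i.e.\ it is $O(\sum_{i,j}|z_i|^2|j||z_j|^2)$ rather than $O(\sum|z_i|^2|z_j|^2)$, so it belongs with the unbounded terms, but your convolution argument handles it identically.
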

\begin{proof}
We first show that $P_{z}\in \ell^{a,p-1}$. From \eqref{perturbation1}, it is obvious that $\|P_{z}\|_{a,p-1}\leqslant c\varepsilon \|z\|_{a,p}$. The other components of $X_{P}$ is similar, and we get
$X_{P}\in \ell^{a,p-1}$. Because of the form of $\widetilde{P}$ in \eqref{perturbation} and $P$
in \eqref{perturbation1}, we know that $\|P_{z}(\omega, \cdot)\|_{a,p-1}^{\text{lip}}\leqslant c\varepsilon \|z\|_{a,p}$. $X_{P}$ is Lipschitz continuous on $\mathcal {O}$, for all $\omega \in
D(s,r)$.
\end{proof}

\begin{proposition}\label{The special form of the perturbation}(The special form of the perturbation) The perturbation $P$ in Hamiltonian \eqref{Hamiltonian3} belongs to $\mathcal {A}_{n_1,n_2}$.
\end{proposition}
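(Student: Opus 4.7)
My plan is to trace the defining index conditions of $\mathcal{A}_{n_1,n_2}$ through each stage of the construction that produces $P$. First I observe that the starting perturbation
\begin{equation*}
    G = \tfrac{1}{2}\sum j G_{ijkl}\,q_i q_j \bar{q}_k \bar{q}_l
\end{equation*}
already lies in $\mathcal{A}_{n_1,n_2}$: the coefficient $G_{ijkl}$ vanishes unless $i+j=k+l$, which gives both the compact-form identity $\sum_n(\alpha_n-\beta_n)n = i+j-k-l = 0$ and the gauge-invariance identity $\sum_n(\alpha_n-\beta_n) = 0$ (since the $q$-degree equals the $\bar{q}$-degree). The quadratic part $\Lambda = \sum \lambda_j|q_j|^2$ trivially lies in $\mathcal{A}_{n_1,n_2}$, so $H = \Lambda + G$ does too.

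Next I invoke Lemma \ref{Birkhoff normal form theorem 1}, which gives $\widetilde{H} = \Lambda + \bar{G} + \hat{G} + K$ with $K \in \mathcal{A}_{n_1,n_2}$ and with $\bar{G} = \frac{1}{4\pi}\sum j|q_i|^2|q_j|^2$ manifestly in the class; the remainder $\hat{G}$ is a sum over $(i,j,k,l)$ satisfying $i+j-k-l=0$ of terms $jG_{ijkl}q_iq_j\bar{q}_k\bar{q}_l$, hence also in $\mathcal{A}_{n_1,n_2}$. Thus the whole $\widetilde{H}$ belongs to $\mathcal{A}_{n_1,n_2}$ in the $q,\bar{q}$ coordinates. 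At this point I apply the lemma stated just before the proposition, which asserts that the symplectic polar and complex coordinates transform $q_{n_j} = \sqrt{I_j+\xi_j}\,e^{\mi\theta_j}$, $q_j = z_j$ ($j\in\mathbb{Z}_1$) preserves membership in $\mathcal{A}_{n_1,n_2}$. Applied to $\widetilde{H}$, this produces $\widetilde{P}$ as displayed in \eqref{perturbation}, and $\widetilde{P} \in \mathcal{A}_{n_1,n_2}$.

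The rescaling $\xi \mapsto \varepsilon^4 \xi$, $z \mapsto \varepsilon^3 z$, $I \mapsto \varepsilon^6 I$ followed by division by $\varepsilon^{11}$ multiplies each monomial by a constant and therefore does not affect the two index constraints, so $P^\ast$ defined by \eqref{perturbation1} remains in $\mathcal{A}_{n_1,n_2}$. For the extra correction in \eqref{perturbation 1}, namely $-\varepsilon^2\frac{1}{2\pi}(n_1+n_2)\bigl(I_1+I_2+\sum_{j\in\mathbb{Z}_1}|z_j|^2\bigr)^2$, every monomial is $\theta$-independent (so $k=0$) and pairs each $z_j$ with a $\bar{z}_j$ (so $\alpha=\beta$); both defining equalities hold trivially. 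Consequently $P = \varepsilon P^\ast + (\text{correction}) \in \mathcal{A}_{n_1,n_2}$.

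The main obstacle here is not substantive but bookkeeping: the conceptual content is entirely contained in the compact-form and gauge-invariance lemmas and in the Birkhoff normal form lemma. The only step requiring a moment of care is confirming the transform-invariance when $\sqrt{I_j+\xi_j}\,e^{\mi\theta_j}$ is expanded as a formal series in $I_j$ (with $\theta_j$ contributing $k_j = \pm 1$ per factor of $e^{\pm\mi\theta_j}$), but this is precisely what the preceding lemma checks term by term.
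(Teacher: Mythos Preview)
Your proof is correct and follows essentially the same route as the paper: both invoke $K\in\mathcal{A}_{n_1,n_2}$ from Lemma~\ref{Birkhoff normal form theorem 1}, the transform-invariance lemma preceding the proposition, and the observation that the remaining explicit terms in \eqref{perturbation} and \eqref{perturbation 1} satisfy the two index constraints trivially. You are simply more explicit than the paper's terse ``It follows from $K\in\mathcal{A}_{n_1,n_2}$ that $P\in\mathcal{A}_{n_1,n_2}$,'' which leaves the reader to check the pieces you spelled out.
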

\begin{proof}
Consider the Taylor-Fourier expansion of $P$: $P=\sum_{k,\alpha,\beta}P_{k,\alpha,\beta}(I)e^{\mi\langle k,\theta\rangle}z^{\alpha}\bar{z}^{\beta}$. It follows
from $K\in \mathcal {A}_{n_1,n_2}$ that $P\in \mathcal {A}_{n_1,n_2}$, i.e.
\begin{equation*}
    P_{k,\alpha,\beta}(I)=0,
\end{equation*}
whenever
\begin{equation*}
          k_1n_1+k_2n_2+\sum\limits_{n}(\alpha_n-\beta_n)n\neq 0
\end{equation*}
or
\begin{equation*}
          k_1+k_2+\sum\limits_{n}(\alpha_n-\beta_n)\neq 0.
\end{equation*}
This implies that $P$ contains no terms of the form $e^{\mi\langle k, \theta\rangle}z_{j}\bar{z}_{-j}$ with
$|j|> \frac{1}{2}\max \{|n_1|,|n_2|\}|k|$. Especially, $P$ contains no terms of the form $z_{j}\bar{z}_{-j}$.
Together with Lemma \ref{The special form}, there is no terms of the form $e^{\mi\langle k, \theta\rangle}z_n\bar{z}_n$ with $k\neq 0$ and $z_n\bar{z}_m$ with $n\neq m$ in $P$.
\end{proof}

\section{KAM step}
To begin with the KAM iteration, we first fixed $s, r>0 , p\geqslant\frac{3}{2}, d=2, \delta=1$ and restrict the Hamiltonian \eqref{Hamiltonian3} on the domain $D(s,r)$ and restrict
the parameter to the set $\mathcal {O}_{0}$. Initially, we set $\omega_{0}=\omega$, $\Omega_{0}=\Omega$, $P_{0}=P$, $r_{0}=r$, $s_{0}=0$ and $M_{0}=M$.
Consider the Hamiltonian :
\begin{equation*}
    N_{0}=\langle\omega_{0}(\xi),I\rangle+\sum\limits_{j\in \mathbb{Z}_1}\Omega_j^{0}(\xi)z_j\bar{z}_j, \ \ \ H_{0}=N_{0}+P_{0}.
\end{equation*}
Hence, $H_{0}$ is real analytic on $D(s_{0}, r_{0})$ and also depends on $\xi\in\mathcal {O}_{0}$ Whitney smoothly. It is obviously that there is constant $\varepsilon_{0}>0$ such that
\begin{equation*}
    \|X_{P}\|_{r,q,D(s_{0},r_{0}) \times \mathcal {O}_{0}}^{\sup}+ \frac{\gamma_{0}}{M_{0}}\|X_{P}\|_{r,q,D(s_{0},r_{0}) \times \mathcal {O}_{0}}^{\text{lip}}\leqslant \varepsilon_{0}.
\end{equation*}
where $\gamma_{0}=\varepsilon_{0}^{\frac{1}{3}}$.

We recall that
\begin{equation*}
    \begin{split}
    \mathcal {O}_{0}=\{&\xi\in \mathcal {O}: |\langle k, \omega_{0}(\xi)\rangle|\geqslant \frac{\gamma_{0}}{|k|^{\tau}}, k\neq 0;\\
    &|\langle k, \omega_{0}(\xi)\rangle\pm \Omega^{0}_{j}(\xi)|\geqslant \frac{\gamma_{0}|j|^{1+\delta}}{|k|^{\tau}}, k\neq 0;\\
    &|\langle k, \omega_{0}(\xi)\rangle\pm \Omega^{0}_{i}(\xi)\pm\Omega^{0}_{j}(\xi)|\geqslant \frac{\gamma_{0}(|i|+|j|)(|i|^{\delta}+|j|^{\delta})}{|k|^{\tau}}, k\neq 0;\\
    &|\langle k, \omega_{0}(\xi)\rangle+ \Omega^{0}_{i}(\xi)-\Omega^{0}_{j}(\xi)|\geqslant \frac{\gamma_{0}(|i|-|j|)(|i|^{\delta}+|j|^{\delta})}{|k|^{\tau}}, k\neq 0\, \text{and}\, |i|\neq |j|;\\
    &|\langle k, \omega_{0}(\xi)\rangle+ \Omega^{0}_{j}(\xi)-\Omega^{0}_{-j}(\xi)|\geqslant \frac{\gamma_{0}|j|^{\delta}}{|k|^{\tau}}, k\neq 0.\}
    \end{split}
\end{equation*}
and $P_{0}=\sum_{k,\alpha,\beta}P_{k,\alpha,\beta}^{0}(I)e^{\mi\langle k,\theta\rangle}z^{\alpha}\bar{z}^{\beta}\in\mathcal {A}_{n_1,n_2}$:
$k, \alpha, \beta$ have the following relations:
\begin{equation*}
          k_1n_1+k_2n_2+\sum\limits_{n}(\alpha_n-\beta_n)n=0\ \text{and}\ k_1+k_2+\sum\limits_{n}(\alpha_n-\beta_n)=0.
\end{equation*}
Suppose that after a $\nu$-th KAM step, we arrive at a Hamiltonian
\begin{equation*}
    H=H_{\nu}=N_{\nu}+P_{\nu}, \ \ \ N=N_{\nu}=\langle\omega_{\nu}(\xi),I\rangle+\sum\limits_{j\in \mathbb{Z}_1}\Omega_j^{\nu}(\xi)z_j\bar{z}_j,
\end{equation*}
which is real analytic in $(\theta, I, z\ \bar{z})\in D=D_{\nu}=D(s_{\nu}, r_{\nu})$ for some $s_{\nu}\leqslant s_{0}$, $r_{\nu}\leqslant r_{0}$ and
depends on $\xi\in \mathcal {O}_{\nu}\subset \mathcal {O}_{0}$ Whitney smoothly, where
\begin{equation*}
    \begin{split}
    \mathcal {O}_{\nu}=\{&\xi: |\langle k, \omega_{\nu}(\xi)\rangle|\geqslant \frac{\gamma}{|k|^{\tau}}, k\neq 0;\\
    &|\langle k, \omega_{\nu}(\xi)\rangle\pm \Omega_{j}^{\nu}(\xi)|\geqslant \frac{\gamma|j|^{1+\delta}}{|k|^{\tau}}, k\neq 0;\\
    &|\langle k, \omega_{\nu}(\xi)\rangle\pm \Omega_{i}^{\nu}(\xi)\pm\Omega_{j}^{\nu}(\xi)|\geqslant \frac{\gamma(|i|+|j|)(|i|^{\delta}+|j|^{\delta})}{|k|^{\tau}}, k\neq 0;\\
    &|\langle k, \omega_{\nu}(\xi)\rangle+ \Omega_{i}^{\nu}(\xi)-\Omega_{j}^{\nu}(\xi)|\geqslant \frac{\gamma(|i|-|j|)(|i|^{\delta}+|j|^{\delta})}{|k|^{\tau}}, k\neq 0\, \text{and}\, |i|\neq |j|;\\
    &|\langle k, \omega_{\nu}(\xi)\rangle+ \Omega_{j}^{\nu}(\xi)-\Omega_{-j}^{\nu}(\xi)|\geqslant \frac{\gamma|j|^{\delta}}{|k|^{\tau}}, k\neq 0.\}
    \end{split}
\end{equation*}
for some $\gamma_{\nu}\leqslant \gamma_{0}$. We also assume that
\begin{equation*}
    \|X_{P}\|_{r,q,D_{\nu}(s_{\nu},r_{\nu}) \times \mathcal {O}}^{\lambda_{\nu}}\leqslant \varepsilon_{\nu}.
\end{equation*}
for some $\lambda_{\nu}=\frac{\gamma_{\nu}}{M_{\nu}}$ and $0<\varepsilon_{\nu}\leqslant\varepsilon_{0}$.
$P=P_{\nu}=\sum_{k,\alpha,\beta}P_{k,\alpha,\beta}^{0}(I)e^{\mi\langle k,\theta\rangle}z^{\alpha}\bar{z}^{\beta}\in\mathcal {A}_{n_1,n_2}$:
$k, \alpha, \beta$ have the following relations:
\begin{equation*}
          k_1n_1+k_2n_2+\sum\limits_{n}(\alpha_n-\beta_n)n=0\ \text{and}\ k_1+k_2+\sum\limits_{n}(\alpha_n-\beta_n)=0.
\end{equation*}
We will construct a symplectic transformation $\Phi_{\nu}$, such that
\begin{equation*}
    H_{\nu+1}=H_{\nu}\circ\Phi_{\nu}=N_{\nu+1}+P_{\nu+1}
\end{equation*}
with another normal form $N_{\nu+1}$ and a smaller perturbation $P_{\nu+1}$ which defined on a smaller domain $D_{\nu+1}$.
We drop the index $\nu$ of $H_{\nu},N_{\nu},P_{\nu},\Phi_{\nu}$ and shorten the index $\nu+1$ as $+$.
Also, throughout the whole paper, we use letters c, C to denote suitable (possibly different) constants
that do not depend on the iteration steps.

\subsection{The Homological Equations}
Expand $P$ into the Fourier-Taylor series
\begin{equation*}
    P=\sum\limits_{k\in \mathbb{Z}^2, l\in\mathbb{N}^2, \alpha,\beta}P_{k,l,\alpha,\beta}(\xi)e^{\mi\langle k,\theta\rangle}I^{l}z^{\alpha}\bar{z}^{\beta}.
\end{equation*}
Let $R$ be the truncation of $P$ given by
\begin{equation}\label{truncation of P}
    R=\sum\limits_{2|l|+|\alpha+\beta|\leqslant 2}\sum\limits_{k\in \mathbb{Z}^2}P_{k,l,\alpha,\beta}(\xi)e^{\mi\langle k,\theta\rangle}I^{l}z^{\alpha}\bar{z}^{\beta}.
\end{equation}
The mean value of such a Hamiltonian is defined as
\begin{equation*}
    [R]=\sum\limits_{|l|+|\alpha|=1}P_{0,l,\alpha,\alpha}(\xi)I^{l}z^{\alpha}\bar{z}^{\alpha}
\end{equation*}
and is of the same form as $N$.

The coordinate transformation $\Phi$ is obtained as the time-1-map $X_{F}^{t}|_{t=1}$ of a Hamiltonian vector field $X_{F}$, where $F$
is the same form as $R$. Using the Taylor formula we can write
\begin{equation*}
    \begin{split}
    H\circ \Phi=N&\circ X_{F}^{1}+ R\circ X_{F}^{1}+(P-R)\circ X_{F}^{1}\\
               =N&+\{N,F\}+\int^1_0 (1-t)\{\{N,F\},F\}\circ X_{F}^{t}\,dt\\
                &+R+\int^1_0 \{R,F\}\circ X_{F}^{t}\,dt+(P-R)\circ X_{F}^{1}\\
               =N&+R+\{N,F\}+\int^1_0 \{R+(1-t)\{N,F\}, F\}\circ X_{F}^{t}\,dt+(P-R)\circ X_{F}^{1},
    \end{split}
\end{equation*}
where $\{\cdot,\cdot\}$ is the Possion bracket:
\begin{equation*}
    \{F,G\}=\sum\limits_{1\leqslant j\leqslant 2}\left(\frac{\partial F}{\partial \theta_j}\frac{\partial G}{\partial I_j}-\frac{\partial F}{\partial I_j}\frac{\partial G}{\partial \theta_j}\right)
    -\mi\sum\limits_{j\in \mathbb{Z}_{1}}\left(\frac{\partial F}{\partial z_j}\frac{\partial G}{\partial \bar{z}_j}-\frac{\partial F}{\partial \bar{z}_j}\frac{\partial G}{\partial z_j}\right).
\end{equation*}
In view of the previous equation, we define the new normal form by $N_{+} = N + \widehat{N}$, where $\widehat{N}$ satisfies
the so-called homological equation (the unknown are $F$ and $\widehat{N}$):
\begin{equation}\label{homological equation}
    \{F, N\}+ \widehat{N}=R.
\end{equation}
Once the homological equation is solved, we define the new perturbation term $P_{+}$ by
\begin{equation*}
    P_{+}=\int^1_0 \{R(t), F\}\circ X_{F}^{t}\,dt+(P-R)\circ X_{F}^{1},
\end{equation*}
where $R(t)=tR+(1-t)\widehat{N}$.
\begin{lemma}\label{solving homological equations}
The homological Equation \eqref{homological equation} has a solution $F$, $\widehat{N}$ which is unique with $[F]
= 0$, $[\widehat{N}] = \widehat{N}$, $F$ is regular on $D(s, r)\times\mathcal {O}$ in the above sense, and satisfies for $0 < \sigma < s$ the
estimates
\begin{equation*}
\begin{split}
    \|X_{F}\|_{r,p,D(s-\sigma,r)\times \mathcal{O}}^{\text{\emph{sup}}}&\leqslant \frac{C}{\gamma\sigma^{2\tau+3}}\|X_{R}\|_{r,q,D(s,r)\times \mathcal{O}}^{\text{\emph{sup}}},\\
    \|X_{F}\|_{r,p,D(s-\sigma,r)\times \mathcal{O}}^{\text{\emph{lip}}}&\leqslant \frac{C}{\gamma\sigma^{2\tau+3}}\left(\|X_{R}\|_{r,q,D(s,r)\times
    \mathcal{O}}^{\text{\emph{lip}}}+\frac{M}{\gamma}\|X_{R}\|_{r,q,D(s,r)\times \mathcal{O}}^{\text{\emph{sup}}}\right),
\end{split}
\end{equation*}
and
\begin{equation*}
\begin{split}
    \|X_{\widehat{N}}\|_{r,q,D(s-\sigma,r)\times \mathcal{O}}^{\text{\emph{sup}}}&\leqslant C\|X_{R}\|_{r,q,D(s,r)\times \mathcal{O}}^{\text{\emph{sup}}},\\
    \|X_{\widehat{N}}\|_{r,q,D(s-\sigma,r)\times \mathcal{O}}^{\text{\emph{lip}}}&\leqslant C\left(\|X_{R}\|_{r,q,D(s,r)\times
    \mathcal{O}}^{\text{\emph{lip}}}+\frac{M}{\gamma}\|X_{R}\|_{r,q,D(s,r)\times \mathcal{O}}^{\text{\emph{sup}}}\right).
\end{split}
\end{equation*}
\end{lemma}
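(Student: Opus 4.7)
The plan is to solve \eqref{homological equation} by Fourier--Taylor matching of coefficients, using the measure-theoretic conditions defining $\mathcal{O}_{\nu}$ to invert the small divisors. First I would expand $F$ in the same truncated form as $R$ in \eqref{truncation of P},
\[
F=\sum_{2|l|+|\alpha+\beta|\leqslant 2}\sum_{k\in\mathbb{Z}^{2}}F_{k,l,\alpha,\beta}(\xi)\,e^{\mi\langle k,\theta\rangle}I^{l}z^{\alpha}\bar{z}^{\beta},
\]
impose $[F]=0$, and set $\widehat{N}=[R]$. Since $N$ is independent of $\theta$ and diagonal in $(z,\bar{z})$, a direct computation of the Poisson bracket yields
\[
\{F,N\}_{k,l,\alpha,\beta}=\mi\bigl(\langle k,\omega\rangle-\langle\alpha-\beta,\Omega\rangle\bigr)F_{k,l,\alpha,\beta},
\]
so \eqref{homological equation} reduces to the scalar family
\[
\mi\bigl(\langle k,\omega\rangle-\langle\alpha-\beta,\Omega\rangle\bigr)F_{k,l,\alpha,\beta}=R_{k,l,\alpha,\beta}
\]
for every non-mean index $(k,l,\alpha,\beta)$, together with $\widehat{N}=[R]$.

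Because $|\alpha+\beta|\leqslant 2$, the vector $\alpha-\beta$ takes only finitely many shapes, producing exactly the five divisor types excised in $\mathcal{O}_{\nu}$. The delicate point is that, a priori, monomials of the form $e^{\mi\langle k,\theta\rangle}z_{n}\bar{z}_{n}$ with $k\neq 0$, or $z_{n}\bar{z}_{m}$ with $n\neq m$ and $k=0$, would give denominators $\langle k,\omega\rangle$ or $\Omega_{n}-\Omega_{m}$ that carry no structural weight and would be fatal. However, Proposition \ref{The special form of the perturbation} combined with Lemma \ref{The special form} guarantees that $R\in\mathcal{A}_{n_{1},n_{2}}$ contains no such monomials, so every surviving $(k,\alpha,\beta)$ falls into one of the five Diophantine types and its denominator is bounded below by $\gamma/|k|^{\tau}$ times a mode weight ($|j|^{1+\delta}$, $(|i|+|j|)(|i|^{\delta}+|j|^{\delta})$, $(|i|-|j|)(|i|^{\delta}+|j|^{\delta})$, or $|j|^{\delta}$). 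These weights are exactly what is needed to absorb the one derivative lost from the unbounded perturbation ($\delta=1$) when $X_{F}$ is measured in $\|\cdot\|_{a,p}$.

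The sup estimate then follows by inverting the divisor, using the Cauchy bound on $R_{k,l,\alpha,\beta}$, and summing in $k$ via $\sum_{k}|k|^{2\tau}e^{-|k|\sigma}\leqslant C\sigma^{-(2\tau+3)}$; rewriting in the weighted norm on $D(s-\sigma,r)$ yields the asserted bound on $X_{F}$, while the bound on $X_{\widehat{N}}=X_{[R]}$ is immediate because $[R]$ is a sub-sum of $R$. For the Lipschitz part, setting $D(\xi)=\langle k,\omega(\xi)\rangle-\langle\alpha-\beta,\Omega(\xi)\rangle$ one writes
\[
\Delta_{\xi\zeta}F_{k,l,\alpha,\beta}=\frac{\Delta_{\xi\zeta}R_{k,l,\alpha,\beta}}{\mi D(\xi)}-\frac{R_{k,l,\alpha,\beta}(\zeta)\,\Delta_{\xi\zeta}D}{\mi D(\xi)D(\zeta)}.
\]
The first term contributes the $\|X_{R}\|^{\text{lip}}$ piece divided by one divisor; the second uses $|\Delta_{\xi\zeta}D|\leqslant CM(|k|+|\alpha|+|\beta|)|\xi-\zeta|$ and, after absorbing a second copy of the divisor, produces the additional factor $M/\gamma$ recorded in the statement.

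The main obstacle is the structural step: verifying that $\mathcal{A}_{n_{1},n_{2}}$ truly excises every index pair whose divisor would fall outside the five Diophantine types of $\mathcal{O}_{\nu}$, so that no unmanageable small denominator survives. Once this reduction is in hand, the remaining work is standard small-divisor bookkeeping combined with the summation identity above.
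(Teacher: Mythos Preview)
Your proposal is correct and follows essentially the same route as the paper: Fourier--Taylor matching of coefficients, reduction to scalar divisor equations classified by the five Diophantine types carved out of $\mathcal{O}_\nu$, and the crucial invocation of $R\in\mathcal{A}_{n_1,n_2}$ together with Lemma~\ref{The special form} to eliminate the otherwise fatal monomials $e^{\mi\langle k,\theta\rangle}z_n\bar{z}_n$ ($k\neq 0$) and $z_n\bar{z}_m$ ($k=0$, $n\neq m$). The paper organizes the same computation by splitting $R=R^0+R^1+R^2$ according to the degree in $(z,\bar{z})$ and, for the bilinear block $R^{11}$, passes from the coefficient-wise gain $|F_{ij}|\lesssim |R_{ij}|/||i|-|j||$ to the operator-norm bound on $\mathcal{F}^{11}$ via the auxiliary Lemmas~A.3--A.4; your phrase ``rewriting in the weighted norm'' is exactly this step, and one small correction is that $|\Delta_{\xi\zeta}D|$ is controlled by $M(|k|+|\alpha-\beta|_\delta)|\xi-\zeta|$ rather than $M(|k|+|\alpha|+|\beta|)|\xi-\zeta|$, the extra $|j|^\delta$ being absorbed, as you say, by the second copy of the divisor.
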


\begin{proof}
Decompose  $R= R^{0}+R^{1}+R^{2}$, where $R^{j}$ comprises all terms in the expansion of $R$ with $|\alpha+\beta|=j$. Decompose similarly $F, N$ and $\widehat{N}$,
where necessarily $N^{1}=0$ and $\widehat{N}^{1}=0$ by normalization. The homological equation decomposes into
\begin{equation}\label{homological equation 1}
    \begin{split}
     &\{F^{0}, N\} + \widehat{N}^{0}=R^{0},\\
     &\{F^{1}, N\} =R^{1},\\
     &\{F^{2}, N\} + \widehat{N}^{2}=R^{2}.\\
    \end{split}
\end{equation}
We will see that with the chosen normalization and the Diophantine conditions these equations
determine $\widehat{N}^0, F^0, F^1$ and then $\widehat{N}^2, F^2 $ uniquely.

Due to independence of $z, \bar{z}$, the first equation amounts to the classical, finite-dimensional partial  differential equation
\begin{equation*}
    \partial_{\omega}F^{0}+\widehat{N}^{0}=R^{0}, \ \ \ \ \partial_{\omega}=\sum\limits_{1\leqslant i\leqslant 2} \omega_{i}\partial_{\theta_{i}}.
\end{equation*}
This leads to $\widehat{N}^{0}=[R^{0}]$ and $\partial_{\omega}F^{0}=R^{0}-[R^{0}]$ with $[F^{0}]=0$.
Their estimates are standard and of the same form --- indeed much better --- than the ones for $F^{1}, F^{2}$ and $\widehat{N}^{2}$ obtained below. For
later reference, we record that
\begin{equation*}
\begin{split}
    \|X_{F^{0}}\|_{r,p,D(s-2\sigma,r)\times \mathcal{O}}^{\text{sup}}&\leqslant \frac{C}{\gamma\sigma^{2\tau+3}}\|X_{R}\|_{r,q,D(s,r)\times \mathcal{O}}^{\text{sup}},\\
    \|X_{F^{0}}\|_{r,p,D(s-4\sigma,r)\times \mathcal{O}}^{\text{lip}}&\leqslant \frac{C}{\gamma\sigma^{2\tau+3}}\left(\|X_{R}\|_{r,q,D(s,r)\times
    \mathcal{O}}^{\text{lip}}+\frac{M}{\gamma}\|X_{R}\|_{r,q,D(s,r)\times \mathcal{O}}^{\text{sup}}\right).
\end{split}
\end{equation*}
Note that $X_{F^{0}}$ does not have any $z, \bar{z}$ component, so $\|X_{F^{0}}\|_{r,p}$ does not depend on $p$.

Consider the second equation in \eqref{homological equation 1}. Writing
\begin{equation*}
    R^{1}= R^{10} + R^{01} = \langle \mathcal {R}^{10}, z \rangle + \langle \mathcal {R}^{01}, \bar{z} \rangle
\end{equation*}
and similarly $F^{1}$ it decomposes into
\begin{equation*}
    \{F^{ij}, N\}= R^{ij}, \ \ \ i+j=1,
\end{equation*}
and it suffices to study each equation individually.

We have $\mathcal {R}^{10}= \left.R_{z}\right|_{z=\bar{z}=0}$ and thus
\begin{equation*}
    \frac{1}{r}\|\mathcal {R}^{10}\|_{a,q,D(s)}^{\text{sup}}\leqslant \|X_{R}\|_{r,q,D(s,r)}^{\text{sup}},
\end{equation*}
where $D(s)=\left\{\theta: |\text{Im}\ \theta|<s \right\}$. Writing $R^{10}=\langle \mathcal {R}^{10}, z \rangle= \sum_{j\in \mathbb{Z}_{1}} R_{j}(\theta, \xi)z_{j}$, and
similarly $F^{10}$, the equation $\{F^{10}, N\} =R^{10}$ further decomposes into
\begin{equation*}
    \partial_{\omega}F_{j} - \mi \Omega_{j}F_{j} = R_{j}, \ \ \ \ j\in \mathbb{Z}_{1}.
\end{equation*}
Expanding $R_{j}$ in a Fourier series,
$R_{j}=\sum_{k\in \mathbb{Z}^{n}\setminus \{0\}}\hat{R}_{jk}e^{i\langle k,\theta \rangle}$,
and similarly $F_{j}$. Then above homological equation can written in
\begin{equation*}
    \mi(\langle k,\omega \rangle - \Omega_{j})\hat{F}_{jk}=\hat{R}_{jk}, \ \ j\in \mathbb{Z}_{1}.
\end{equation*}

By the non-degeneracy condition \eqref{nonresonant condition} and Diophantine condition, we have uniformly
on $\mathcal {O}$
\begin{equation*}
    |\Omega_{j}(\xi)|\geqslant m |j|^d, \ \ \ \ j\in \mathbb{Z}_{1},
\end{equation*}
and
\begin{equation*}
    |\langle k, \omega(\xi)\rangle- \Omega_{j}(\xi)|\geqslant \frac{\gamma|j|^{1+\delta}}{|k|^{\tau}}, \ \ k\neq 0\ \text{and}\ j\in \mathbb{Z}_{1}.
\end{equation*}

Using the estimates in Lemma A.3, the unique solution $F_{j}$ satisfies the estimate
\begin{equation*}
    |F_{j}|^{\text{sup}}_{D(s-2\sigma)}\leqslant \frac{C}{\gamma \sigma^{\tau+1}|j|^{1+\delta}} |R_{j}|^{\text{sup}}_{D(s-\sigma)}, \ \ \ j\in \mathbb{Z}_{1}.
\end{equation*}
Since $p-q\leqslant \delta$, this and Lemma A.4 imply
\begin{equation*}
    \|\mathcal {F}^{10}\|^{\text{sup}}_{a,p,D(s-2\sigma)}\leqslant \frac{C}{\gamma \sigma^{\tau+1}} \|\mathcal {R}^{10}\|^{\text{sup}}_{a,q,D(s)}
    \leqslant \frac{C}{\gamma \sigma^{\tau+1}}r\|X_{R}\|^{\text{sup}}_{r,q,D(s,r)}.
\end{equation*}
The same estimate holds for $\mathcal {F}^{01}$. Multiplying $\mathcal {F}^{10}$ with $z$ and $\mathcal {F}^{01}$ with $\bar{z}$ and using $p>\frac{3}{2}$
this gives
\begin{equation*}
    \frac{1}{r^2}|F^{1}|^{\text{sup}}_{D(s-2\sigma,r)}\leqslant \frac{C}{\gamma \sigma^{\tau+1}}\|X_{R}\|^{\text{sup}}_{r,q,D(s,r)},
\end{equation*}
finally with Cauchy's estimate
\begin{equation*}
    \|X_{F^{1}}\|^{\text{sup}}_{r,a,p,D(s-3\sigma,r)}\leqslant \frac{C}{\gamma \sigma^{\tau+1}}\|X_{R}\|^{\text{sup}}_{r,q,D(s,r)}.
\end{equation*}

To obtain Lipschitz estimates, we study first the differences $\Delta F_{j}=F_{j}(\xi)-F_{j}(\zeta)$ for $\xi,\zeta \in \mathcal {O}$. we obtain
\begin{equation*}
    \partial_{\omega}\Delta F_{j} - \mi \Omega_{j}\Delta F_{j} = \Delta R_{j} +\partial_{\Delta\omega}F_{j} + \mi \Delta\Omega_{j}F_{j}, \ \ \ \ j\in \mathbb{Z}_{1}.
\end{equation*}
The right hand side is known, so $\Delta F_{j}$ uniquely solves the same kind of equation as $F_{j}$. So we obtain
\begin{equation*}
\begin{split}
    |\Delta F_{j}|^{\text{sup}}_{D(s-3\sigma)}&\leqslant \frac{C}{\gamma \sigma^{\tau+1}|j|^{1+\delta}} \left(|\Delta R_{j}|^{\text{sup}}_{D(s-\sigma)}
            + \frac{1}{\sigma}|F_{j}|^{\text{sup}}_{D(s-2\sigma)}(|\Delta \omega|+|\Delta \Omega_{j}|^{\text{sup}}_{D(s)})\right)\\
            &\leqslant \frac{C}{\gamma \sigma^{\tau+1}|j|^{1+\delta}} |\Delta R_{j}|^{\text{sup}}_{D(s-\sigma)}
            + \frac{C}{\gamma^2 \sigma^{2\tau+3}|j|^{2(1+\delta)}} |R_{j}|^{\text{sup}}_{D(s-\sigma)}(|\Delta \omega|+|\Delta \Omega_{j}|^{\text{sup}}_{D(s)}).
\end{split}
\end{equation*}
Then
\begin{equation*}
    \|\Delta \mathcal {F}^{10}\|^{\text{sup}}_{a,p,D(s-3\sigma)}
    \leqslant \frac{C}{\gamma \sigma^{\tau+1}} \|\Delta \mathcal {R}^{10}\|^{\text{sup}}_{a,q,D(s)}
    +\frac{C}{\gamma^2 \sigma^{2\tau+3}}\|\mathcal {R}^{10}\|^{\text{sup}}_{D(s)}(|\Delta \omega|+|\Delta \Omega|^{\text{sup}}_{-\delta,D(s)}).
\end{equation*}
Dividing by $|\xi-\zeta|\neq 0$ and taking the supremum over $\mathcal {O}$,
\begin{equation*}
\begin{split}
    \|\mathcal {F}^{10}\|^{\text{lip}}_{a,p,D(s-3\sigma)}
    &\leqslant \frac{C}{\gamma \sigma^{\tau+1}} \|\mathcal {R}^{10}\|^{\text{lip}}_{a,q,D(s)}
    +\frac{C}{\gamma^2 \sigma^{2\tau+3}}\|\mathcal {R}^{10}\|^{\text{sup}}_{D(s)}(|\omega|^{\text{lip}}_{\mathcal {O}}+ |\Omega|^{\text{lip}}_{-\delta,\mathcal {O}})\\
    &\leqslant \frac{C}{\gamma \sigma^{2\tau+3}} \left(\|\mathcal {R}^{10}\|^{\text{lip}}_{a,q,D(s)}
    +\frac{M}{\gamma}\|\mathcal {R}^{10}\|^{\text{sup}}_{D(s)}\right).
\end{split}
\end{equation*}
The same estimate applies to $\mathcal {F}^{01}$. So for the vector field of $F^{1}$, we finally get
\begin{equation*}
    \|X_{F^{1}}\|^{\text{lip}}_{r,p,D(s-4\sigma)}
    \leqslant \frac{C}{\gamma \sigma^{2\tau+3}} \left(\|X_{R}\|^{\text{lip}}_{r,q,D(s,r)}
    +\frac{M}{\gamma}\|X_{R}\|^{\text{sup}}_{r,q, D(s,r)}\right).
\end{equation*}
This concludes the discussion of $F^1$.

Now we consider the third equation in \eqref{homological equation 1}. Write $R^2 = R^{20} + R^{11} + R^{02}$ and similarly $F^2$ and
$N^2$. This equation decomposes into
\begin{equation}\label{homological equation 2}
    \{F^{ij}, N\} + \widehat{N}^{ij}=R^{ij},
\end{equation}
while $\widehat{N}^{ij}=0$ for $i\neq j$.

Consider the equation for $F^{11}$, which is slightly more complicated than the ones for $F^{20}$ and $F^{02}$.
Writing $R^{11}=\langle \mathcal {R}^{11}z, \bar{z}\rangle$, we have
$\left.\mathcal {R}^{11} = R_{z\bar{z}}\right|_{z=\bar{z}=0}$.
Thus, $R^{11}$ is the Jacobian of $R_{z}$ with respect to $\bar{z}$ at $\bar{z} = 0$. By Cauchy's inequality, we have
\begin{equation*}
    \|\mathcal {R}^{11}\|^{\text{sup}}_{q,p,D(s)}\leqslant \frac{1}{r}\|R_{z}\|^{\text{sup}}_{q,D(s,r)}\leqslant \|X_{R}\|^{\text{sup}}_{r,q,D(s,r)},
\end{equation*}
where $\|\cdot\|_{q,p}$ denotes the operator norm included by $\|\cdot\|_{a,p}$ and $\|\cdot\|_{a,p}$ in the source and target spaces,
respectively.

Note that P contains no terms of the form $z_{i}\bar{z}_{j}$ with $i\neq j$ and $e^{\mi\langle k, \theta\rangle}z_{j}\bar{z}_{j}$ with $k\neq 0$, write more explicitly
\begin{equation*}
    R^{11}=\sum\limits_{\substack{|i|\neq |j| \\ i,j \in \mathbb{Z}_1}}R_{ij}(\theta,\xi)z_{i}\bar{z}_{j}
    +\sum\limits_{j \in \mathbb{Z}_1}R_{jj}(\xi)z_{j}\bar{z}_{j}
    +\sum\limits_{\substack{j \in \mathbb{Z}_1 \\ |j|\leqslant \frac{1}{2}\max \{|n_1|,|n_2|\}|k|}}R_{j(-j)}(\theta,\xi)z_{j}\bar{z}_{-j},
\end{equation*}
and similarly $F^{11}$. The Eq.\,\eqref{homological equation 2} decomposes into
\begin{equation*}
    \partial_{\omega}F_{ij} - \mi (\Omega_{i}-\Omega_{j})F_{ij} = R_{ij}, \ \ \ \ i\neq j.
\end{equation*}
and $F_{jj} = 0$ for $j\neq 0$, $F_{j(-j)} = 0$ for $|j|>\frac{1}{2}\max \{|n_1|,|n_2|\}|k|$.

Again, by Diophantine condition , we have
\begin{equation*}
    |\langle k, \omega(\xi)\rangle+(\Omega_{i}(\xi)-\Omega_{j}(\xi))|\geqslant \frac{\gamma(|i|-|j|)(|i|^{\delta}+|j|^{\delta})}{|k|^{\tau}}, \ \ \ k\neq 0\ \text{and}\ |i|\neq |j|,
\end{equation*}
and
\begin{equation*}
    |\langle k, \omega(\xi)\rangle+(\Omega_{j}(\xi)-\Omega_{-j}(\xi))|\geqslant \frac{\gamma|j|^{\delta}}{|k|^{\tau}}, \ \ \ k\neq 0,
\end{equation*}
then we obtain
\begin{equation*}
    (|i|^{\delta}+|j|^{\delta})|F_{ij}|^{\text{sup}}_{D(s-2\sigma)}\leqslant \frac{C}{\gamma \sigma^{\tau+1}||i|-|j||} |R_{ij}|^{\text{sup}}_{D(s-\sigma,r)}, \ \ \ |i|\neq |j|,
\end{equation*}
and
\begin{equation*}
    |j|^{\delta}|F_{j(-j)}|^{\text{sup}}_{D(s-2\sigma)}\leqslant \frac{C}{\gamma\sigma^{\tau+1}}|R_{j(-j)}|^{\text{sup}}_{D(s-\sigma,r)}.
\end{equation*}
With Lemma A.4, this yield
\begin{equation*}
    \|\mathcal {F}^{11}\|^{\text{sup}}_{p,p,D(s-2\sigma)}, \|\mathcal {F}^{11}\|^{\text{sup}}_{q,q,D(s-2\sigma)}
    \leqslant \frac{C}{\gamma \sigma^{\tau+3}} \|\mathcal {R}^{11}\|^{\text{sup}}_{q,p,D(s)}
    \leqslant \frac{C}{\gamma \sigma^{\tau+3}}\|X_{R}\|^{\text{sup}}_{r,q,D(s,r)},
\end{equation*}
The same, and even better estimates hold for $F^{20}$ and $F^{02}$. Multiplying with $z$, $\bar{z}$ we then get
\begin{equation*}
    \frac{1}{r^2}|F^{2}|^{\text{sup}}_{D(s-2\sigma,r)}\leqslant \frac{C}{\gamma \sigma^{\tau+3}}\|X_{R}\|^{\text{sup}}_{r,q,D(s,r)},
\end{equation*}
finally with Cauchy's estimate
\begin{equation*}
    \|X_{F^{2}}\|^{\text{sup}}_{r,p,D(s-3\sigma,r)}\leqslant \frac{C}{\gamma \sigma^{\tau+3}}\|X_{R}\|^{\text{sup}}_{r,q,D(s,r)}.
\end{equation*}
The estimate for the Lipschitz semi-norm of $X_{F^2}$ is obtained by the same arguments as the one
for $X_{F^1}$ , and the result is analogous. We therefore omit it.

The estimates of $X_{\widehat{N}}$ follow from the observation that
\begin{equation*}
    \widehat{N}=\sum\limits_{|l|=1}P_{0l00}I^{l}+\sum\limits_{j\in \mathbb{Z}_{1}}P_{00jj}(\xi)z_{j}\bar{z}_{j}.
\end{equation*}
The final estimates of the lemma are obtained by replacing $\sigma$ by $\frac{\sigma}{4}$ throughout the proof.
\end{proof}
For $\lambda\geqslant 0$, we define
\begin{equation*}
    \|X\|^{\lambda}_{r}=\|X\|^{\text{sup}}_{r}+ \lambda \|X\|^{\text{lip}}_{r}.
\end{equation*}
The symbol ``$\lambda$'' in $\|X\|^{\lambda}_{r}$ will always be used in this role and never has the meaning of exponentiation.

\begin{lemma}\label{solving homological equations 1}The estimates of Lemma \ref{solving homological equations} imply that
\begin{equation*}
\begin{split}
    &\|X_{F}\|_{r,p,D(s-\sigma,r)\times \mathcal{O}}^{\lambda}\leqslant \frac{C}{\gamma\sigma^{2\tau+3}}\|X_{R}\|_{r,q,D(s,r)\times \mathcal{O}}^{\lambda},\\
    &\|X_{\widehat{N}}\|_{r,q,D(s-\sigma,r)\times \mathcal{O}}^{\lambda}\leqslant C\|X_{R}\|_{r,q,D(s,r)\times \mathcal{O}}^{\lambda},
\end{split}
\end{equation*}
for $0<\sigma<s$ and $0\leqslant\lambda\leqslant\frac{\gamma}{M}$ with another $C$ of the same form as in Lemma \ref{solving homological equations}.
\end{lemma}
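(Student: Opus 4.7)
The plan is to derive the $\lambda$-norm estimates directly by linearly combining the two bounds in Lemma \ref{solving homological equations}. By the definition $\|X\|_{r}^{\lambda}=\|X\|_{r}^{\text{sup}}+\lambda\|X\|_{r}^{\text{lip}}$, I would add the sup-norm bound for $X_F$ to $\lambda$ times the Lipschitz bound for $X_F$ that Lemma \ref{solving homological equations} already provides:
\begin{equation*}
\|X_F\|_{r,p,D(s-\sigma,r)\times\mathcal{O}}^{\text{sup}}+\lambda\|X_F\|_{r,p,D(s-\sigma,r)\times\mathcal{O}}^{\text{lip}}
\leqslant \frac{C}{\gamma\sigma^{2\tau+3}}\Bigl(\|X_R\|^{\text{sup}}+\lambda\|X_R\|^{\text{lip}}+\lambda\tfrac{M}{\gamma}\|X_R\|^{\text{sup}}\Bigr),
\end{equation*}
where the norms of $X_R$ on the right are taken on $D(s,r)\times\mathcal{O}$ with subscripts $r,q$.

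The key observation, which makes the crossed Lipschitz term absorb back, is the range constraint $0\leqslant\lambda\leqslant\gamma/M$. This gives $\lambda M/\gamma\leqslant 1$, so the extra contribution $\lambda\tfrac{M}{\gamma}\|X_R\|^{\text{sup}}$ is dominated by $\|X_R\|^{\text{sup}}$. Grouping terms then yields
\begin{equation*}
\|X_F\|_{r,p,D(s-\sigma,r)\times\mathcal{O}}^{\lambda}
\leqslant \frac{2C}{\gamma\sigma^{2\tau+3}}\bigl(\|X_R\|^{\text{sup}}+\lambda\|X_R\|^{\text{lip}}\bigr)
= \frac{2C}{\gamma\sigma^{2\tau+3}}\|X_R\|_{r,q,D(s,r)\times\mathcal{O}}^{\lambda},
\end{equation*}
which is the first claimed bound after absorbing the factor $2$ into the constant.

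The estimate for $X_{\widehat{N}}$ follows by the identical argument, using the two bounds on $\|X_{\widehat{N}}\|^{\text{sup}}$ and $\|X_{\widehat{N}}\|^{\text{lip}}$ from Lemma \ref{solving homological equations}: the coefficient is now $C$ rather than $C/(\gamma\sigma^{2\tau+3})$, and again the only non-routine piece is bounding $\lambda M/\gamma\leqslant 1$ to absorb the cross term. There is no real obstacle here; the statement is a purely formal consolidation of the two separate estimates into a single Lipschitz-weighted norm, and the only thing to track carefully is that the hypothesis $\lambda\leqslant\gamma/M$ is precisely what is needed to keep the constant of the same shape as in Lemma \ref{solving homological equations}.
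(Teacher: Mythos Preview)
Your argument is correct and is exactly the standard derivation: add the sup-estimate to $\lambda$ times the Lipschitz-estimate from Lemma~\ref{solving homological equations}, then use $\lambda\leqslant\gamma/M$ to bound the cross term $\lambda\tfrac{M}{\gamma}\|X_R\|^{\text{sup}}\leqslant\|X_R\|^{\text{sup}}$ and absorb the resulting factor $2$ into $C$. The paper does not spell out a proof for this lemma (it is stated as an immediate consequence of Lemma~\ref{solving homological equations}), and your reasoning is precisely the intended one.
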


The preceding lemma also gives us an estimate of $\|DX_{F}\|_{r,p,p,D(s-2\sigma,r)\times \mathcal{O}}^{\lambda}$ with the help of Cauchy's estimate.

\begin{lemma}Under the assumptions of Lemma \ref{solving homological equations},
\begin{equation*}
    \|DX_{F}\|_{r,p,p,D(s-2\sigma,r)\times \mathcal{O}}^{\lambda}, \|DX_{F}\|_{r,q,q,D(s-2\sigma,r)\times \mathcal{O}}^{\lambda}\leqslant \frac{C}{\gamma\sigma^{2\tau+3}}\|X_{R}\|_{r,q,D(s,r)\times
    \mathcal{O}}^{\lambda}.
\end{equation*}
\end{lemma}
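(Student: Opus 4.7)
The plan is to deduce the Jacobian bound from the vector field bound in Lemma \ref{solving homological equations 1} by a Cauchy-type argument, exploiting the polynomial truncation that defines $F$. Recall that $F$ has the same form as $R$, which by \eqref{truncation of P} satisfies $2|l|+|\alpha+\beta|\leqslant 2$. Thus the coefficients of $X_F=(F_I,-F_\theta,-\mi F_{\bar z},\mi F_z)^T$ are polynomials in $(I,z,\bar z)$ of very low degree: $F_I$ does not depend on $(I,z,\bar z)$ at all, while $F_z,\,F_{\bar z}$ are constant in $I$ and of degree at most one in $(z,\bar z)$. Only $F_\theta$ carries the full (still very low) polynomial dependence. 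This is the crucial structural observation, and it is exactly what prevents the $1/r^2$ losses one would naively expect from Cauchy's inequality in the action and normal directions.

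First I would treat the partial derivative with respect to $\theta$. The standard Cauchy inequality on the domain $D(s-\sigma,r)$ yields, on the smaller domain $D(s-2\sigma,r)$, the bound $\|\partial_\theta X_F\|\leqslant \sigma^{-1}\|X_F\|$ (with appropriate norms). Combined with Lemma \ref{solving homological equations 1}, this contributes a factor $C\gamma^{-1}\sigma^{-2\tau-4}$; the extra $1/\sigma$ will be absorbed into the constant $C$ in the statement (equivalently, one can simply replace $\sigma$ by $\sigma/2$ throughout the proof of Lemma \ref{solving homological equations}, so that the claimed $\sigma$-exponent matches).

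Next I would handle the partial derivatives with respect to $I$ and to $z,\bar z$. By the polynomial structure of $F$ noted above, differentiating $F_I$ in any direction gives $0$, differentiating $F_{\bar z},F_z$ in $I$ gives $0$, and differentiating them in $z$ or $\bar z$ produces coefficients that are still bounded (in the relevant operator norms $\|\cdot\|_{p,p}$ or $\|\cdot\|_{q,q}$) by what is essentially the $X_F$-norm itself, with no shrinkage of the $I$- or $(z,\bar z)$-balls required. This is exactly the point where the polynomial cutoff in \eqref{truncation of P} pays off: one never has to divide by $r$ or $r^2$, so the weighted norm $\|\cdot\|_{r,p,p}$ (resp.\ $\|\cdot\|_{r,q,q}$) of the Jacobian is controlled by the weighted norm $\|X_F\|_{r,p}$ (resp.\ $\|X_F\|_{r,q}$) times a purely numerical factor coming from Lemma A.4 of the appendix (convolution/multiplier estimates on $\ell^{a,p}$).

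Finally I would package the Lipschitz part exactly the same way: the Lipschitz seminorm satisfies the same Cauchy-type inequality, and since the definition $\|\cdot\|^{\lambda}_r=\|\cdot\|^{\sup}_r+\lambda\|\cdot\|^{\mathrm{lip}}_r$ is linear in these two pieces, the estimate for the sup-norm and the Lipschitz seminorm combine to give the claimed bound on $\|DX_F\|^\lambda$. The main obstacle, if any, is bookkeeping: verifying that the operator norm $\|\cdot\|_{q,p}$ arising from $\partial_{\bar z_i}(F_z)_j$ and its transpose fit into the weighted scheme $\|\cdot\|_{r,p,p}$ and $\|\cdot\|_{r,q,q}$ without needing anything stronger than the convolution inequality of Lemma A.4. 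Everything else is a direct application of Cauchy's estimate together with Lemma \ref{solving homological equations 1}.
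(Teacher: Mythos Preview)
Your proposal is correct and is precisely the argument that the paper defers to: the paper's own ``proof'' consists solely of the citation ``The proof can be found on page 160 in \cite{Kappeler}'', and what you have sketched is exactly the content of that reference. The key structural point---that the truncation $2|l|+|\alpha+\beta|\leqslant 2$ makes $F_I$ independent of $(I,z,\bar z)$ and $F_z,F_{\bar z}$ at most linear in $(z,\bar z)$, so that only the $\theta$-Cauchy step costs a factor of $\sigma^{-1}$ and no $r$-factors are lost---is the whole argument, and you have identified it correctly. Your handling of the $\sigma$-bookkeeping (replacing $\sigma$ by $\sigma/2$ in Lemma~\ref{solving homological equations} to avoid the apparent extra $\sigma^{-1}$) is also the standard device. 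One small remark: your reference to Lemma~A.4 as a ``convolution/multiplier estimate'' is a slight misdescription---it is an operator bound exploiting $||i|-|j||^{-1}$ decay---but it is indeed the ingredient already used inside the proof of Lemma~\ref{solving homological equations} to obtain the $\|\cdot\|_{p,p}$ and $\|\cdot\|_{q,q}$ bounds on $\mathcal{F}^{11}$, which is what you need here.
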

\begin{proof} The proof can be found on page 160 in \cite{Kappeler}.
\end{proof}

We recall some approximation results in \cite{Poschel1}, which show that the second order approximation
of $P$ can be controlled by $P$, and that $P-R$ is small when we contract the domain (this
contraction is governed by the new parameter $\eta$).

\begin{lemma}Let $P$ satisfies Proposition \ref{Regularity of perturbation} and consider its Taylor approximation $R$ of the
form \eqref{truncation of P}. Then there exists $C > 0$  so that for all $\eta > 0$
\begin{eqnarray}\label{estimate of trunction}
  \|X_R\|^{\lambda}_{r,q,D(s,r)\times \mathcal{O}} &\leqslant & \|X_P\|^{\lambda}_{r,q,D(s,r)\times \mathcal{O}}, \label{estimate of trunction1}\\
  \|X_{P}-X_{R}\|^{\lambda}_{\eta r,q,D(s,4\eta r)\times \mathcal{O}} &\leqslant & C\eta\|X_P\|^{\lambda}_{r,q,D(s,r)\times \mathcal{O}}.\label{estimate of trunction2}
\end{eqnarray}
\end{lemma}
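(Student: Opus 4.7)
The plan is to imitate the standard Taylor-projection and Taylor-remainder estimates that appear in most KAM papers (cf.\ Lemma A.3 of P\"oschel's 1996 paper, or the approximation lemma in \cite{Kappeler}). The main device is the one-parameter rescaling
\begin{equation*}
\phi(t) := P(\theta,\, t^{2}I,\, tz,\, t\bar z), \qquad t\in\mathbb{C},
\end{equation*}
with $(\theta,I,z,\bar z,\xi)$ frozen. Because the truncation rule defining $R$ is $2|l|+|\alpha+\beta|\leqslant 2$, the $k$-th Taylor coefficient $a_{k}$ of $\phi$ at $t=0$ collects precisely the monomials in $P$ of total weight $k$; evaluating at $t=1$ gives $\phi(1)=P$, $a_{0}+a_{1}+a_{2}=R$, and $\sum_{k\geqslant 3}a_{k}=P-R$. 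Carrying out the same decomposition on each component of $X_{P}=(P_{I},-P_{\theta},-\mi P_{\bar z},\mi P_{z})$ will yield both inequalities after Cauchy integration on an appropriate circle.

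For \eqref{estimate of trunction1} I would take $(\theta,I,z,\bar z)\in D(s,r)$ and note that the rescaled point $(\theta,t^{2}I,tz,t\bar z)$ still lies in $D(s,r)$ for every $|t|\leqslant 1$, so $|\phi(t)|\leqslant\sup_{D(s,r)}|P|$. Applying Cauchy's formula $a_{k}=\frac{1}{2\pi\mi}\oint_{|t|=1}\phi(t)t^{-k-1}\,dt$ gives $|a_{k}|\leqslant\sup_{D(s,r)}|P|$ for $k=0,1,2$, so $|R|$, and more importantly each component of $X_{R}$, is controlled by the corresponding component of $X_{P}$. Applying this componentwise produces $\|X_{R}\|^{\sup}_{r,q,D(s,r)}\leqslant\|X_{P}\|^{\sup}_{r,q,D(s,r)}$ (any stray absolute constant can be absorbed into $C$ later or handled by a slightly refined contour). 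The Lipschitz piece of \eqref{estimate of trunction1} follows from exactly the same identity applied to the $\xi$-difference $P(\cdot,\xi)-P(\cdot,\zeta)$.

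For \eqref{estimate of trunction2} the same $\phi$ is used but on a larger disc. Assume without loss of generality that $\eta$ is small. If $(\theta,I,z,\bar z)\in D(s,4\eta r)$ and $|t|\leqslant 1/(4\eta)$, then $|t^{2}I|<r^{2}$ and $|tz|,|t\bar z|<r$, so $\phi$ extends holomorphically to $|t|\leqslant 1/(4\eta)$ with $|\phi(t)|\leqslant\sup_{D(s,r)}|P|$. Writing the remainder as
\begin{equation*}
\sum_{k\geqslant 3}a_{k} \;=\; \frac{1}{2\pi\mi}\oint_{|t|=1/(4\eta)}\frac{\phi(t)}{t^{3}(t-1)}\,dt,
\end{equation*}
a routine estimate of the contour integral yields $|P-R|\leqslant C\eta^{3}\sup_{D(s,r)}|P|$, and likewise for the vector-field components $(P-R)_{I}$, $(P-R)_{\theta}$, $(P-R)_{z}$, $(P-R)_{\bar z}$. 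On the target domain $D(s,4\eta r)$ the weighted norm carries prefactors $1/(\eta r)^{2}$ and $1/(\eta r)$ where the source norm has $1/r^{2}$ and $1/r$, so two of the three surplus powers of $\eta$ are absorbed into the weighting and one survives, giving exactly the factor $\eta$ on the right-hand side of \eqref{estimate of trunction2}. The Lipschitz half is again obtained by running the whole argument on the $\xi$-difference.

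The main technical obstacle is the bookkeeping of the three different weight scalings, namely $r^{2}$ for $I$, $r$ for $z,\bar z$, and no $r$-weight for $\theta$. The choice $\phi(t)=P(\theta,t^{2}I,tz,t\bar z)$ is engineered precisely so that the substitution $t\mapsto 1/(4\eta)$ maps the shrunken ball onto the original ball in all three variables simultaneously; once this matching is set up, checking that the same truncation level $2|l|+|\alpha+\beta|\leqslant 2$ produces uniform $\eta^{3}$-smallness on every component of $X_{P-R}$ and then cancels correctly against the weighting is a direct calculation that I would carry out separately for the $I$-, $\theta$-, $z$-, and $\bar z$-components.
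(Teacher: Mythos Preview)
The paper does not give its own proof of this lemma; the sentence immediately preceding it reads ``We recall some approximation results in \cite{Poschel1}\ldots'' and the result is simply quoted from that reference. Your rescaling device $\phi(t)=P(\theta,t^{2}I,tz,t\bar z)$ together with Cauchy integration on the circle $|t|=1/(4\eta)$ is exactly the standard argument found in \cite{Poschel1} (and in \cite{Kappeler}), so your proposal and the paper's citation coincide.

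One small correction to your bookkeeping for \eqref{estimate of trunction2}: the components of $X_{P-R}$ do \emph{not} all carry the same power $\eta^{3}$. Differentiating $P-R$ (which has weighted degree $\geq 3$) drops the weighted degree by $2$ in $I$, by $1$ in $z$ or $\bar z$, and by $0$ in $\theta$. Hence $(P-R)_{I}$ is a degree-$\geq 1$ remainder ($O(\eta)$), $(P-R)_{z}$ and $(P-R)_{\bar z}$ are degree-$\geq 2$ remainders ($O(\eta^{2})$), and $(P-R)_{\theta}$ is degree-$\geq 3$ ($O(\eta^{3})$). In the weighted norm $\|\cdot\|_{\eta r,q}$ these are divided by $1$, $1/(\eta r)$, $1/(\eta r)$, $1/(\eta r)^{2}$ respectively, so each contributes exactly $O(\eta)$ and the final estimate is correct. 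Your closing remark that the calculation must be done ``separately for the $I$-, $\theta$-, $z$-, and $\bar z$-components'' already points in the right direction, but the intermediate claim of ``uniform $\eta^{3}$-smallness on every component'' should be replaced by this component-dependent count.
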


At the end,  we give some estimates for $X_{F}^{t}$. The formulas \eqref{estimate of flow 1} and \eqref{estimate of flow 2} will be used
to prove our coordinate transformation is well-defined. Inequalities \eqref{estimate of flow 3} and \eqref{estimate of flow 4} will be used to
check the convergence of the iteration.
\begin{lemma}\label{estimate of flow} If $\|X_P\|^{\lambda}_{r,q,D(s,r)\times \mathcal{O}}\leqslant \frac{\gamma\sigma^{2\tau+4}\eta^2}{C}$, we then have
\begin{equation}\label{estimate of flow 1}
    X_{F}^{t}: D(s-2\sigma, \frac{r}{2}) \longrightarrow D(s-\sigma, r), \ \ -1\leqslant t\leqslant 1.
\end{equation}
Similarly,
\begin{equation}\label{estimate of flow 2}
    X_{F}^{t}: D(s-3\sigma, \frac{r}{4}) \longrightarrow D(s-2\sigma, \frac{r}{2}), \ \ -1\leqslant t\leqslant 1.
\end{equation}
Moreover,
\begin{equation}\label{estimate of flow 3}
    \|X_{F}^{t}- Id\|_{r,p,D(s-2\sigma,\frac{r}{2})\times \mathcal{O}}^{\lambda}< C \|X_{F}\|_{r,p,D(s-\sigma,r)\times \mathcal{O}}^{\lambda},
\end{equation}
\begin{equation}\label{estimate of flow 4}
    \|DX_{F}^{t}- Id\|_{r,q,q,D(s-3\sigma,\frac{r}{4})\times \mathcal{O}}^{\lambda}< C \|DX_{F}\|_{r,q,q,D(s-\sigma,r)\times \mathcal{O}}^{\lambda},
\end{equation}
for $0\leqslant\lambda\leqslant\frac{\gamma}{M}$. The latter estimate also holds in the $\|\cdot\|_{r,p,p}$ - norm.
\end{lemma}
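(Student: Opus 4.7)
The plan is to view $X_F^t$ as the flow of the ODE $\dot y = X_F(y)$ on $\mathscr{P}_{\mathbb{C}}^{a,p}$ and exploit the fact that the smallness hypothesis on $X_P$ forces $X_F$ itself (and, via Cauchy's inequality, $DX_F$) to be very small. Combining Lemma~\ref{solving homological equations 1} with the truncation bound \eqref{estimate of trunction1} gives
\begin{equation*}
\|X_F\|_{r,p,D(s-\sigma,r)\times\mathcal{O}}^{\lambda}
\le \frac{C}{\gamma\sigma^{2\tau+3}}\|X_P\|_{r,q,D(s,r)\times\mathcal{O}}^{\lambda}
\le C\sigma\eta^{2},
\end{equation*}
and applying Cauchy's inequality on a slightly shrunken domain yields an analogous $O(\eta^{2})$ bound for $\|DX_F\|_{r,p,p,D(s-2\sigma,r/2)\times\mathcal{O}}^{\lambda}$ (and for the $\|\cdot\|_{r,q,q}$ version). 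In particular, for $\eta$ small, $\|X_F\|$ is much smaller than $\sigma$ in the $\theta$-component and much smaller than $r/2$ in the $I,z,\bar z$-components.

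For the inclusions \eqref{estimate of flow 1} and \eqref{estimate of flow 2}, I would use the integral identity $X_F^t(y)=y+\int_0^t X_F(X_F^s(y))\,ds$ together with a standard continuity-in-$t$ bootstrap. Starting from $y\in D(s-2\sigma,r/2)$, so long as the orbit $X_F^s(y)$ remains in $D(s-\sigma,r)$, each component of $X_F^t(y)-y$ is controlled in the weighted norm by $|t|\cdot\|X_F\|_{r,p,D(s-\sigma,r)\times\mathcal{O}}^{\text{sup}}$, which by the smallness above is less than $\sigma$ in the $\theta$-direction and less than $r/2$ in the remaining directions. Hence the orbit cannot escape $D(s-\sigma,r)$ in time $|t|\le 1$, proving \eqref{estimate of flow 1}. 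The inclusion \eqref{estimate of flow 2} is identical after replacing the pair $\bigl(D(s-\sigma,r),D(s-2\sigma,r/2)\bigr)$ by $\bigl(D(s-2\sigma,r/2),D(s-3\sigma,r/4)\bigr)$.

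Once the flows are well-defined on the stated domains, estimate \eqref{estimate of flow 3} follows by integrating $X_F^t-\mathrm{Id}=\int_0^t X_F\circ X_F^s\,ds$, which yields the sup bound directly; the Lipschitz-in-$\xi$ part is treated by decomposing $\Delta_{\xi\zeta}(X_F\circ X_F^s)=(\Delta_{\xi\zeta}X_F)\circ X_F^s(\xi)+DX_F\cdot\Delta_{\xi\zeta}X_F^s$ and applying a Gronwall argument in which the second term is absorbed by the $O(\eta^{2})$-smallness of $\|DX_F\|$. For \eqref{estimate of flow 4} one differentiates the flow identity in $y$ to obtain the Volterra-type equation
\begin{equation*}
DX_F^t=\mathrm{Id}+\int_0^t (DX_F\circ X_F^s)\cdot DX_F^s\,ds,
\end{equation*}
and Gronwall's inequality combined with the smallness of $\|DX_F\|$ on $D(s-2\sigma,r/2)\times\mathcal{O}$ gives the required bound, uniformly in $t\in[-1,1]$. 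The $\|\cdot\|_{r,p,p}$ variant is proved verbatim.

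The main obstacle is really the bookkeeping of successive domain contractions: one needs enough $\theta$-width and radial room so that (i) the orbit stays inside a domain where $X_F$ is already controlled, and (ii) the Cauchy inequality used to pass from bounds on $X_F$ to bounds on $DX_F$ costs precisely the $\sigma$ and $r$ that one can afford. The pattern $\sigma\to 2\sigma\to 3\sigma$ and $r\to r/2\to r/4$ in the statement is tailored exactly to these two constraints, after which the argument proceeds along the classical lines referenced by the authors in Kappeler--P\"oschel.
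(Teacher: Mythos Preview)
Your proposal is correct and follows essentially the same route as the paper: the authors' proof consists of the single sentence ``We can use Lemma~\ref{solving homological equations 1} and Lemma~A.5 to prove this lemma,'' and what you have written is precisely an inline unpacking of those two ingredients --- the smallness of $X_F$ via Lemma~\ref{solving homological equations 1} together with \eqref{estimate of trunction1}, and then the standard flow estimates (integral identity, bootstrap, Gronwall) that constitute the content of Lemma~A.5 from P\"oschel.
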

We can use Lemma \ref{solving homological equations 1} and Lemma A.5 to prove this lemma.

\subsection{The New Hamiltonian}

The map $\Phi=X_{F}^{1}$ defined above transforms $H$ into $H \circ \Phi=N_{+}+P_{+}$ on $D(s-\sigma, \frac{r}{2})$,
where $N_{+}=N+\widehat{N}$ and
\begin{equation*}
    P_{+}=\int^1_0 \{R(t),F\}\circ X_{F}^{t}\,dt+(P-R)\circ X_{F}^{1},
\end{equation*}
where $R(t)=tR+(1-t)\widehat{N}$. Hence
\begin{equation*}
    X_{P_{+}}=\int^1_0 (X_{F}^{t})^{*}[X_{R(t)}, X_F]\,dt+(X_{F}^{1})^{*}(X_P-X_R).
\end{equation*}
From the paper \cite{Poschel1},  we have known the following result:
\begin{equation}\label{estimate of vector field}
    \|(X_F^t)^*Y\|^{\lambda}_{\eta r,q,D(s-4\sigma,\eta r)} \leqslant C\|Y\|^{\lambda}_{\eta r,q,D(s-2\sigma,4\eta r)}, \ \ 0\leqslant t\leqslant 1.
\end{equation}
We already have estimated $\|X_{P}-X_{R}\|^{\lambda}_{\eta r,q}$ in \eqref{estimate of trunction2}, so it remains to consider the commutator
$\|[X_{R(t)}, X_F]\|_{r,q}$.

First, we have
\begin{equation*}
 \begin{split}
     \|X_{R(t)}\|^{\lambda}_{r,q,D(s-\sigma,r)}&\leqslant \|X_{\widehat{N}}\|^{\lambda}_{r,q,D(s-\sigma,r)}+ \|X_{R}\|^{\lambda}_{r,q,D(s-\sigma,r)}\\
     &\leqslant C\|X_{P}\|^{\lambda}_{r,q,D(s,r)}.
 \end{split}
\end{equation*}
Moreover, we have the pointwise estimate
\begin{equation*}
 \begin{split}
     \|[X_{R(t)}, X_F]\|_{r,q}&\leqslant \|DX_{R(t)}\cdot X_F\|_{r,q} + \|X_{R(t)}\cdot DX_F\|_{r,q}\\
     &\leqslant \|DX_{R(t)}\|_{r,q,p}\|X_{F}\|_{r,p}+\|DX_{F}\|_{r,q,q}\|X_{R(t)}\|_{r,q}.
 \end{split}
\end{equation*}
By the product rule for Lipschitz-norms and Cauchy's estimate, we thus obtain
\begin{equation*}
 \begin{split}
     \|[X_{R(t)}, X_F]\|^{\lambda}_{r,q, D(s-2\sigma,\frac{r}{2})}
     \leqslant &\|DX_{R(t)}\|^{\lambda}_{r,q,p,D(s-2\sigma,\frac{r}{2})}\|X_{F}\|^{\lambda}_{r,p,D(s-2\sigma,\frac{r}{2})}\\
     &+\|DX_{F}\|^{\lambda}_{r,q,q,D(s-2\sigma,\frac{r}{2})}\|X_{R(t)}\|^{\lambda}_{r,q,D(s-2\sigma,\frac{r}{2})}\\
     \leqslant &\frac{C}{\gamma \sigma^{2\tau + 1}}\left(\|X_{P}\|^{\lambda}_{r,q,D(s,r)}\right)^2,
 \end{split}
\end{equation*}
for $0\leqslant\lambda\leqslant\frac{\gamma}{M}$. Hence, also
\begin{equation*}
 \begin{split}
     \|[X_{R(t)}, X_F]\|^{\lambda}_{\eta r,q, D(s-2\sigma,\frac{r}{2})}
     \leqslant & \frac{1}{\eta^2}\|[X_{R(t)}, X_F]\|^{\lambda}_{r,q, D(s-2\sigma,\frac{r}{2})}\\
     \leqslant &\frac{C}{\gamma \sigma^{2\tau + 3}\eta^2}\left(\|X_{P}\|^{\lambda}_{r,q,D(s,r)}\right)^2.
 \end{split}
\end{equation*}

Together with the estimate on$\|X_{P}-X_{R}\|^{\lambda}_{\eta r,q}$ in \eqref{estimate of trunction2} and with that in \eqref{estimate of vector field}, we finally arrive at the
estimate
\begin{equation*}
   \|X_{P_{+}}\|^{\lambda}_{\eta r,q, D(s-2\sigma,\frac{r}{2})}\leqslant C\eta\|X_P\|^{\lambda}_{r,q,D(s,r)} + \frac{C}{\gamma \sigma^{2\tau +
   3}\eta^2}\left(\|X_{P}\|^{\lambda}_{r,q,D(s,r)}\right)^2,
\end{equation*}
for $0\leqslant\lambda\leqslant\frac{\gamma}{M}$. This is the bound for the new perturbation.

Now turn to the new frequencies $\omega_{+}(\xi)=\omega(\xi)+\widehat{\omega}(\xi)$ and $\Omega_{+}(\xi)=\Omega(\xi)+\widehat{\Omega}(\xi)$. For $\widehat{N}$, we
have the estimate
\begin{equation*}
    \|X_{\widehat{N}}\|_{r,q,D(s-\sigma,r)\times \mathcal{O}}^{\lambda}\leqslant C\|X_{R}\|_{r,q,D(s,r)\times \mathcal{O}}^{\lambda},
\end{equation*}
for $0\leqslant\lambda\leqslant\frac{\gamma}{M}$. The weighted norm implies that we have $|\widehat{\omega}(\xi)|\leqslant \|X_{\widehat{N}}\|^{\text{sup}}_{r,q}$ and
$\|\widehat{\Omega}(\xi)z\|_{a,q}\leqslant r\|X_{\widehat{N}}\|^{\text{sup}}_{r,q}$ on $D(s,r)$ and consequently $|\widehat{\Omega}(\xi)|_{q-p}\leqslant \|X_{\widehat{N}}\|^{\text{sup}}_{r,q}$. The
same holds for the Lipschitz semi-norms. Since $p-q\leqslant \delta$, we obtain
\begin{equation}\label{new frequencies norm}
    |\widehat{\omega}|^{\lambda}_{\mathcal {O}} +  |\widehat{\Omega}|^{\lambda}_{-\delta, \mathcal {O}}\leqslant C\|X_{P}\|_{r,q,D(s,r)\times \mathcal{O}}^{\lambda},
\end{equation}
where $\Omega=(\Omega_{j})_{j\in \mathbb{Z}_{1}}$ and $\widehat{\Omega}=(\widehat{\Omega}_{j})_{j\in \mathbb{Z}_{1}}$.

In order to control the assumptions of the KAM step for the iteration, we notice that the last
estimate also implies
\begin{equation}\label{estimate of new frequence}
    |\langle l,\widehat{\Omega}(\xi)\rangle|
    \leqslant \langle l\rangle_{\delta}|\widehat{\Omega}|_{-\delta}
    \leqslant \langle l\rangle_{d-1}\|X_{P}\|_{r,q,D(s,r)\times \mathcal{O}}^{\lambda}.
\end{equation}

\begin{lemma}$P_{+}\in \mathcal {A}_{n_1,n_2}$.
\end{lemma}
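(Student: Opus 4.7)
The plan is to trace how the compact form and gauge invariant property (collectively, membership in $\mathcal{A}_{n_1,n_2}$) propagate through a single KAM step, relying crucially on the fact that Poisson brackets preserve both properties (Lemma \ref{compact form} and Lemma \ref{gauge invariant property}). By the inductive hypothesis stated at the beginning of Section 5, $P \in \mathcal{A}_{n_1,n_2}$, and I must deduce the same for
\[
P_+ \;=\; \int_0^1 \{R(t), F\} \circ X_F^t \, dt \;+\; (P - R) \circ X_F^1,
\]
where $R(t) = tR + (1-t)\widehat{N}$.

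First I would observe that the truncation $R$ in \eqref{truncation of P} consists of a subset of the Fourier-Taylor monomials of $P$; since the selection rules defining $\mathcal{A}_{n_1,n_2}$ involve only $(k,\alpha,\beta)$, both $R$ and $P-R$ inherit membership in $\mathcal{A}_{n_1,n_2}$. The normal-form correction $\widehat{N} = \sum_{|l|=1} P_{0,l,0,0}(\xi) I^l + \sum_{j \in \mathbb{Z}_1} P_{0,0,e_j,e_j}(\xi) z_j \bar{z}_j$ manifestly has $k=0$ and $\alpha=\beta$ on every surviving term, so both selection rules are satisfied trivially; hence $\widehat{N} \in \mathcal{A}_{n_1,n_2}$. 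For the generating function $F$, I would solve the homological equation mode by mode: because $N$ is diagonal (containing only $I_j$ and $|z_j|^2$ monomials), each Fourier-Taylor coefficient $F_{k,l,\alpha,\beta}$ comes out as a small-divisor multiple of $R_{k,l,\alpha,\beta}$, so the support of $F$ in the multi-index $(k,\alpha,\beta)$ is contained in that of $R$. Therefore $F \in \mathcal{A}_{n_1,n_2}$, and consequently $R(t) \in \mathcal{A}_{n_1,n_2}$ for every $t \in [0,1]$; Lemma \ref{compact form} and Lemma \ref{gauge invariant property} then give $\{R(t), F\} \in \mathcal{A}_{n_1,n_2}$.

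It remains to handle the composition with the Hamiltonian flow $X_F^t$. Here I would invoke the Lie series expansion $G \circ X_F^t = \sum_{n \geq 0} \tfrac{t^n}{n!}\, \mathrm{ad}_F^n G$, where $\mathrm{ad}_F G = \{G, F\}$; each iterated bracket lies in $\mathcal{A}_{n_1,n_2}$ by repeated application of Lemmas \ref{compact form} and \ref{gauge invariant property}. Because $\mathcal{A}_{n_1,n_2}$ is defined by linear constraints on Taylor-Fourier coefficients, it is closed under absolutely convergent sums and under the $t$-integration, and the convergence itself is ensured by the smallness of $X_F$ already established in Lemma \ref{estimate of flow}. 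Thus both $\{R(t),F\} \circ X_F^t$ and $(P-R) \circ X_F^1$ belong to $\mathcal{A}_{n_1,n_2}$, and hence so does $P_+$. The step most prone to concealment of a subtlety is the claim $F \in \mathcal{A}_{n_1,n_2}$, since it is the only place where an operator (not merely a bracket) is inverted; but the diagonal structure of $N$ reduces the inversion to a coefficient-wise division, preserving the index support, so no genuine obstacle arises.
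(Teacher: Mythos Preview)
Your proof is correct and follows essentially the same approach as the paper: both arguments hinge on showing $F\in\mathcal{A}_{n_1,n_2}$ (since its coefficients are small-divisor multiples of those of $R$), then expanding the flow composition as a Lie series and applying Lemmas \ref{compact form} and \ref{gauge invariant property} term by term. Your treatment is in fact more explicit than the paper's, which writes out the Lie series for $P_+$ directly in terms of $P$, $N$, $F$ and then invokes the same two lemmas without spelling out the intermediate claims about $R$, $\widehat{N}$, and $R(t)$.
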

\begin{proof}Note that
\begin{equation*}
\begin{split}
    P_{+}=&P-R+\{P, F\}+\frac{1}{2!}\{\{N, F\}, F\}+\frac{1}{2!}\{\{P, F\}, F\}\\
      &+ \cdots + \frac{1}{n!}\{\cdots\{N, \underbrace{F\}\cdots, F}\limits_{n}\} + \frac{1}{n!}\{\cdots\{P, \underbrace{F\}\cdots, F}\limits_{n}\}+ \cdots.
\end{split}
\end{equation*}
Since $P\in \mathcal {A}_{n_1,n_2}$, then $F$, so do $P-R$, $\{N, F\}$ and $\{P, F\}$. The lemma follows
from Lemma \ref{compact form} and Lemma \ref{gauge invariant property}.
\end{proof}

\subsection{Iteration Lemma}
For given $\varepsilon_{0}$, $m_{0} = m$,
$M_{0}=M$, $r_{0}=r$, $s_{0}=s<1$. Moreover, we define
sequences as follows:
\begin{equation*}
    \begin{split}
    \sigma_{0}=&\ \frac{s}{8}, \ \ \ \ \sigma_{\nu+1}=\frac{\sigma_{\nu}}{2}, \ \ \ \ s_{\nu+1}=s_{\nu}-2\sigma_{\nu},\\
    \eta^{3}_{\nu}=&\ \frac{\varepsilon_{\nu}}{\gamma_{\nu}\sigma_{\nu}^{2\tau+3}}, \ \ \ \ r_{\nu+1}=\eta_{\nu}r_{\nu}, \ \ \ \ D_{\nu}=D(s_{\nu}, r_{\nu}),\\
    \gamma_{0}=&\ \varepsilon_{0}^{\frac{1}{3}}, \ \ \ \ \gamma_{\nu}=\varepsilon_{\nu}^{\frac{1}{3}},
    \ \ \ \ M_{\nu}=\ M_{0}(2-2^{\nu}), \ \ \ \ \lambda_{\nu}=\frac{\gamma_{\nu}}{M_{\nu}},\\
    m_{\nu}=&\ \frac{m_0}{2}(1+2^{-\nu}), \ \ \ \ \varepsilon_{\nu+1}=C(\gamma_{\nu}\sigma_{\nu}^{2\tau+3})^{-\frac{1}{3}}\varepsilon_{\nu}^{\frac{4}{3}},\\
    \end{split}
\end{equation*}
\begin{equation*}
    \begin{split}
    \mathcal {O}_{\nu}=\{&\xi\in \mathcal {O}_{\nu-1}: |\langle k, \omega_{\nu}(\xi)\rangle|\geqslant \frac{\gamma_{\nu}}{|k|^{\tau}}, k\neq 0;\\
    &|\langle k, \omega_{\nu}(\xi)\rangle\pm \Omega^{\nu}_{j}(\xi)|\geqslant \frac{\gamma_{\nu}|j|^{1+\delta}}{|k|^{\tau}}, k\neq 0;\\
    &|\langle k, \omega_{\nu}(\xi)\rangle\pm \Omega^{\nu}_{i}(\xi)\pm\Omega^{\nu}_{j}(\xi)|\geqslant \frac{\gamma_{\nu}(|i|+|j|)(|i|^{\delta}+|j|^{\delta})}{|k|^{\tau}}, k\neq 0;\\
    &|\langle k, \omega_{\nu}(\xi)\rangle+ \Omega^{\nu}_{i}(\xi)-\Omega^{\nu}_{j}(\xi)|\geqslant \frac{\gamma_{\nu}(|i|-|j|)(|i|^{\delta}+|j|^{\delta})}{|k|^{\tau}}, k\neq 0\, \text{and}\, |i|\neq
    |j|;\\
    &|\langle k, \omega_{\nu}(\xi)\rangle+ \Omega^{\nu}_{j}(\xi)-\Omega^{\nu}_{-j}(\xi)|\geqslant \frac{\gamma_{\nu}|j|^{\delta}}{|k|^{\tau}}, k\neq 0\}.
    \end{split}
\end{equation*}
The proceeding analysis can be summarized as follows:
\begin{lemma}\label{iterative lemma}
Let
\begin{equation*}
    \varepsilon_{0}\leqslant \frac{\gamma_{0}\sigma_{0}^{2\tau+6}}{C^3}, \ \   \gamma_{0}\leqslant \frac{m_0}{2}.
\end{equation*}
Suppose, $H_{\nu}=N_{\nu}+P_{\nu}$ is given on $D(s_{\nu}, r_{\nu})\times \mathcal {O}_{\nu}$ which is real analytic in $(\theta, I, z, \bar{z})\in D(s_{\nu}, r_{\nu})$ and
Whitney smooth in $\xi\in \mathcal {O}_{\nu}$, where
\begin{equation*}
    N_{\nu}=\langle\omega_{\nu}(\xi),I\rangle+\sum\limits_{j\in \mathbb{Z}_1}\Omega^{\nu}_j(\xi)z_j\bar{z}_j.
\end{equation*}
Its coefficients satisfy
\begin{equation*}
\begin{split}
    |\omega_{\nu}|^{\emph{\text{lip}}}_{\mathcal {O}_{\nu}}+ |\Omega^{\nu}|^{\emph{\text{lip}}}_{-\delta,\mathcal {O}_{\nu}}&\leqslant M_{\nu},\\
    |\omega_{\nu}-\omega_{\nu-1}|^{\lambda_{\nu}}_{\mathcal {O}_{\nu}}&\leqslant \varepsilon_{\nu-1},\\
    |\Omega^{\nu}-\Omega^{\nu-1}|^{\lambda_{\nu}}_{-\delta,\mathcal {O}_{\nu}}&\leqslant \varepsilon_{\nu-1},\\
\end{split}
\end{equation*}
and
\begin{equation}\label{estimate of spectral}
    |\langle l,\Omega^{\nu}(\xi)\rangle|\geqslant m_{\nu} \langle l\rangle_{d-1}, \ \ \forall 1\leqslant |l|\leqslant 2.
\end{equation}
$P_{\nu}\in \mathcal {A}_{n_{1},n_{2}}$, and
\begin{equation*}
    \|X_{P_{\nu}}\|^{\lambda_{\nu}}_{r_{\nu},q,D(s_{\nu},r_{\nu})\times \mathcal {O}_{\nu}}\leqslant \varepsilon_{\nu}.
\end{equation*}
Then there exists a family of symplectic coordinate transformation
\begin{equation*}
    \Phi_{\nu+1}: D_{\nu+1}\times \mathcal {O}_{\nu} \longrightarrow D_{\nu}
\end{equation*}
and a closed subset
\begin{equation*}
    \mathcal{O}_{\nu+1}=\mathcal {O}_{\nu}\setminus \bigcup\limits_{|k|>0,l}\mathfrak{R}^{\nu+1}_{k,l}(\gamma_{\nu+1}),
\end{equation*}
where
\begin{equation*}
    \begin{split}
    \mathfrak{R}^{\nu+1}_{k,l}(\gamma_{\nu+1})=\{&\xi\in \mathcal {O}_{\nu}: |\langle k, \omega_{\nu+1}(\xi)\rangle|< \frac{\gamma_{\nu+1}}{|k|^{\tau}}; \\
    &|\langle k, \omega_{\nu+1}(\xi)\rangle\pm \Omega^{\nu+1}_{j}(\xi)|< \frac{\gamma_{\nu+1}|j|^{1+\delta}}{|k|^{\tau}};\\
    &|\langle k, \omega_{\nu+1}(\xi)\rangle\pm \Omega^{\nu+1}_{i}(\xi)\pm\Omega^{\nu+1}_{j}(\xi)|< \frac{\gamma_{\nu+1}(|i|+|j|)(|i|^{\delta}+|j|^{\delta})}{|k|^{\tau}};\\
    &|\langle k, \omega_{\nu+1}(\xi)\rangle+ \Omega^{\nu+1}_{i}(\xi)-\Omega^{\nu+1}_{j}(\xi)|< \frac{\gamma_{\nu+1}(|i|-|j|)(|i|^{\delta}+|j|^{\delta})}{|k|^{\tau}},
    |i|\neq |j|;\\
    &|\langle k, \omega_{\nu+1}(\xi)\rangle+ \Omega^{\nu+1}_{j}(\xi)-\Omega^{\nu+1}_{-j}(\xi)|< \frac{\gamma_{\nu+1}|j|^{\delta}}{|k|^{\tau}}.\}
    \end{split}
\end{equation*}
such that for $H_{\nu+1}=H_{\nu}\circ \Phi_{\nu+1}=N_{\nu+1}+P_{\nu+1}$ the same assumptions as above are satisfied
with ``$\nu+1$'' in place of ``$\nu$''.
\end{lemma}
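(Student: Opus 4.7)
The plan is to apply one full KAM step, packaged by the material of Sections 5.1 and 5.2, to the Hamiltonian $H_\nu$ and then verify that every hypothesis of the lemma reappears at level $\nu+1$. Concretely, I let $R_\nu$ be the Taylor truncation \eqref{truncation of P} of $P_\nu$, solve the homological equation \eqref{homological equation} for $F_\nu$ and $\widehat{N}_\nu$ via Lemma \ref{solving homological equations}, and define $\Phi_{\nu+1}=X_{F_\nu}^{1}$. The new normal form is $N_{\nu+1}=N_\nu+\widehat{N}_\nu$ with
\begin{equation*}
   \omega_{\nu+1}=\omega_\nu+\widehat{\omega}_\nu,\qquad \Omega^{\nu+1}=\Omega^\nu+\widehat{\Omega}_\nu,
\end{equation*}
and the new perturbation is $P_{\nu+1}=\int_0^1\{R_\nu(t),F_\nu\}\circ X_{F_\nu}^{t}\,dt+(P_\nu-R_\nu)\circ X_{F_\nu}^{1}$. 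The choice $\eta_\nu^3=\varepsilon_\nu/(\gamma_\nu\sigma_\nu^{2\tau+3})$ makes the smallness hypothesis of Lemma \ref{estimate of flow} read $\varepsilon_\nu\leqslant\gamma_\nu\sigma_\nu^{2\tau+4}\eta_\nu^{2}/C$, which follows (inductively) from $\varepsilon_0\leqslant\gamma_0\sigma_0^{2\tau+6}/C^3$ together with the super-exponential decay of $\varepsilon_\nu$ against the geometric decay of $\sigma_\nu$ and $\gamma_\nu$. Hence $\Phi_{\nu+1}:D_{\nu+1}\to D_\nu$ is well-defined and symplectic by \eqref{estimate of flow 1}.

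Next, the size of the new perturbation is controlled by the estimate
\begin{equation*}
   \|X_{P_{\nu+1}}\|^{\lambda_\nu}_{r_{\nu+1},q,D_{\nu+1}\times\mathcal{O}_\nu}
   \leqslant C\eta_\nu\varepsilon_\nu+\frac{C}{\gamma_\nu\sigma_\nu^{2\tau+3}\eta_\nu^{2}}\varepsilon_\nu^{2}
\end{equation*}
already derived in Section 5.2; with the definitions of $\eta_\nu$ and $\varepsilon_{\nu+1}$ this collapses exactly to $\|X_{P_{\nu+1}}\|^{\lambda_\nu}\leqslant\varepsilon_{\nu+1}$, and the inequality $\lambda_{\nu+1}\leqslant\lambda_\nu$ gives the statement for the $\lambda_{\nu+1}$-norm. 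The frequency variations obey $|\widehat{\omega}_\nu|^{\lambda_\nu}_{\mathcal{O}_\nu}+|\widehat{\Omega}_\nu|^{\lambda_\nu}_{-\delta,\mathcal{O}_\nu}\leqslant C\varepsilon_\nu\leqslant\varepsilon_\nu$ by \eqref{new frequencies norm}, which yields both the required closeness estimate and, after summation of a geometric series, the Lipschitz bound $|\omega_{\nu+1}|^{\text{lip}}+|\Omega^{\nu+1}|^{\text{lip}}_{-\delta}\leqslant M_0+\sum_{\mu\leqslant\nu}C\varepsilon_\mu/\lambda_\mu\leqslant M_{\nu+1}$, since $\lambda_\mu=\gamma_\mu/M_\mu$ and $\varepsilon_\mu/\gamma_\mu$ decays geometrically.

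The main technical step is to promote the Melnikov lower bound
\begin{equation*}
   |\langle l,\Omega^{\nu+1}(\xi)\rangle|\geqslant m_{\nu+1}\langle l\rangle_{d-1},\qquad 1\leqslant|l|\leqslant 2,
\end{equation*}
and to verify that $\mathcal{O}_{\nu+1}$ is exactly the correct subset. Using \eqref{estimate of new frequence}, for $|l|\leqslant 2$ we have $|\langle l,\widehat{\Omega}_\nu\rangle|\leqslant\langle l\rangle_{d-1}\varepsilon_\nu$, so that
\begin{equation*}
   |\langle l,\Omega^{\nu+1}\rangle|\geqslant(m_\nu-C\varepsilon_\nu)\langle l\rangle_{d-1}\geqslant m_{\nu+1}\langle l\rangle_{d-1},
\end{equation*}
because $m_\nu-m_{\nu+1}=m_0 2^{-\nu-1}$ dominates $C\varepsilon_\nu$ under the smallness hypothesis. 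For the Diophantine set, one removes from $\mathcal{O}_\nu$ exactly the resonant zones $\mathfrak{R}^{\nu+1}_{k,l}$ listed in the lemma; these are open by continuity of $\omega_{\nu+1},\Omega^{\nu+1}$ in $\xi$, so $\mathcal{O}_{\nu+1}$ is closed. Preservation of the class $\mathcal{A}_{n_1,n_2}$ by $P_{\nu+1}$ follows because $P_\nu\in\mathcal{A}_{n_1,n_2}$ implies $R_\nu,\widehat{N}_\nu\in\mathcal{A}_{n_1,n_2}$, hence $F_\nu\in\mathcal{A}_{n_1,n_2}$ by the homological equation and the nonresonance discussion (the compact/gauge selection rules are respected by the divisors), and Lemmas \ref{compact form}–\ref{gauge invariant property} close the class under Poisson brackets and hence under the explicit series representation of $P_{\nu+1}$.

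The main obstacle I expect is the bookkeeping needed to make the iterative smallness condition close: one has to check simultaneously that $\varepsilon_\nu/(\gamma_\nu\sigma_\nu^{2\tau+3})\to 0$ fast enough to guarantee both $\eta_\nu\to 0$ (so $r_\nu\to$ some positive limit, and in particular $s_\nu\to s/2$ as announced in the remark after Theorem \ref{theorem 1}) and $C\varepsilon_\nu\leqslant m_\nu-m_{\nu+1}$, while the Lipschitz constant $M_\nu$ only doubles. Once these balancing inequalities are verified from the explicit sequences $\sigma_\nu=s/2^{\nu+3}$, $\gamma_\nu=\varepsilon_\nu^{1/3}$, $\varepsilon_{\nu+1}=C(\gamma_\nu\sigma_\nu^{2\tau+3})^{-1/3}\varepsilon_\nu^{4/3}$, the rest of the proof is essentially assembling pieces already proved in Lemmas \ref{solving homological equations}–\ref{estimate of flow} and in Section 5.2.
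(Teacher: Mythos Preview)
Your proposal is correct and follows essentially the same approach as the paper: verify inductively that $\varepsilon_\nu\leqslant\gamma_\nu\sigma_\nu^{2\tau+6}/C^3$ so that the smallness hypothesis of Lemma~\ref{estimate of flow} holds, then invoke the KAM step of Sections~5.1--5.2 to produce $\Phi_{\nu+1}$, bound $\|X_{P_{\nu+1}}\|$ by $\varepsilon_{\nu+1}$, update the Lipschitz bound on the frequencies to $M_{\nu+1}$, and use \eqref{estimate of new frequence} to propagate the spectral condition \eqref{estimate of spectral}. Two inconsequential slips: $m_\nu-m_{\nu+1}=m_0\,2^{-\nu-2}$ (not $2^{-\nu-1}$), and $r_\nu\to 0$ rather than to a positive limit, but neither affects the argument for the iteration lemma.
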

\begin{proof} By induction one verifies that $\varepsilon_{\nu}\leqslant \frac{\gamma_{\nu}\sigma_{\nu}^{2\tau+6}}{C^3}$. With the definition of $\eta_{\nu}$, namely
$\eta^{3}_{\nu}=\frac{\varepsilon_{\nu}}{\gamma_{\nu}\sigma_{\nu}^{2\tau+3}}$, this
implies $\varepsilon_{\nu}\leqslant\frac{\gamma_{\nu}\sigma_{\nu}^{2\tau+4}\eta_{\nu}^2}{C}$. By the KAM step there exists a transformation $\Phi_{\nu+1}: D_{\nu+1}\times \mathcal {O}_{\nu}
\longrightarrow D_{\nu}$ taking $H_{\nu}$
into $H_{\nu+1}=N_{\nu+1}+P_{\nu+1}$.
The new perturbation $P_{\nu+1}$ then satisfies the estimate
\begin{equation*}
  \begin{split}
   \|X_{P_{\nu+1}}\|^{\lambda_{\nu+1}}_{r_{\nu+1},q,D(s_{\nu+1},r_{\nu+1})\times \mathcal {O}_{\nu+1}}
   &\leqslant C\eta_{\nu} \varepsilon_{\nu}+ \frac{C}{\gamma_{\nu} \sigma_{\nu}^{2\tau + 3}\eta_{\nu}^2}\varepsilon_{\nu}^2\\
   &=C(\gamma_{\nu}\sigma_{\nu}^{2\tau+3})^{-\frac{1}{3}}\varepsilon_{\nu}^{\frac{4}{3}}=\varepsilon_{\nu+1}.
  \end{split}
\end{equation*}
In view of \eqref{new frequencies norm} the Lipschitz semi-norm of the new frequencies is bounded by
\begin{equation*}
    M_{\nu}+ C\|X_{P_{\nu}}\|^{\lambda_{\nu}}_{r_{\nu},q,D(s_{\nu},r_{\nu})\times \mathcal {O}_{\nu}}\leqslant M_{\nu} + \frac{C\varepsilon_{\nu}}{\gamma_{\nu}}M_{\nu}
    \leqslant M_{\nu}(1+2^{-\nu-2})\leqslant M_{\nu+1}
\end{equation*}
as required.
By the estimate \eqref{estimate of new frequence}, that is,
\begin{equation*}
    |\langle l,\Omega^{\nu+1}-\Omega^{\nu}(\xi)\rangle|\leqslant \varepsilon_{\nu}\langle l\rangle_{d-1}
    \leqslant \frac{\gamma_{\nu}\sigma_{\nu}^{2\tau+6}}{C^3}\langle l\rangle_{d-1}
    \leqslant \frac{m_{0}}{2^{\nu+2}}\langle l\rangle_{d-1},
\end{equation*}
then $|\langle l,\Omega^{\nu+1}(\xi)\rangle|\geqslant m_{\nu+1}\langle l\rangle_{d-1}$ for $0<|l|\leqslant 2$.
\end{proof}

\subsection{Convergence}
Let
\begin{equation*}
    \Psi^{\nu}=\Phi_{1}\circ\Phi_{2}\circ\cdots\circ\Phi_{\nu}: D_{\nu}\times \mathcal {O}_{\nu-1} \longrightarrow D_{0}.
\end{equation*}
Inductively, we have that
\begin{equation*}
    H_{\nu}=H\circ \Psi^{\nu}=\langle\omega_{\nu}(\xi),I\rangle+\sum\limits_{j\in \mathbb{Z}_1}\Omega^{\nu}_j(\xi)z_j\bar{z}_j+P_{\nu}(\theta, I ,z,\bar{z},\xi).
\end{equation*}
Note that $ \varepsilon_{\nu} \rightarrow 0$ as $\nu\rightarrow +\infty$, we can make the KAM estimates go on well at each step.
Let $\mathcal {O}_{\ast}=\bigcap^{\infty}_{\nu=0}\mathcal {O}_{\nu}$. As in \cite{Poschel1}, thanks to Lemma \ref{estimate of flow}, it concludes that $N_{\nu}$, $\omega_{\nu}$,
$\Omega^{\nu}$, $\Psi^{\nu}$ and $D\psi^{\nu}$ converge uniformly on $D(\frac{s}{2}, 0) \times \mathcal {O}_{\ast}$ with
\begin{equation*}
     N_{\infty}=\langle\omega_{\infty}(\xi),I\rangle+\sum\limits_{j\in \mathbb{Z}_1}\Omega^{\infty}_j(\xi)z_j\bar{z}_j.
\end{equation*}
Let $X_{H}^{t}$ be the flow of $X_{H}$. Since $H_{\nu}=H\circ \Psi^{\nu}$, we have
\begin{equation}\label{transform rule}
    X_{H}^{t}\circ \Psi^{\nu}= \Psi^{\nu}\circ X_{H_{\nu}}^{t}.
\end{equation}
The uniform convergence of $\Psi^{\nu}$, $D\Psi_{\nu}$, $X_{H_{\nu}}^{t}$ implies that the limits can be taken on the both sides
of \eqref{transform rule}. Hence, on $D(\frac{s}{2}, 0) \times \mathcal {O}_{\ast}$, we get
\begin{equation}
    X_{H}^{t}\circ \Psi^{\infty}= \Psi^{\infty}\circ X_{H_{\infty}}^{t}.
\end{equation}
and
\begin{equation}\label{transform rule1}
    \Psi^{\infty}: D(\frac{s}{2}, 0) \times \mathcal {O}_{\ast} \longrightarrow D(s, r) \times \mathcal {O},
\end{equation}
it follows from \eqref{transform rule1} that we get an invariant finite dimensional tori $\Psi^{\infty}(\mathbb{T}^{2}\times \{\xi\})$ for the original
perturbed Hamiltonian system at $\xi\in \mathcal {O}$. We remark that the frequencies $\omega_{\ast}(\xi)=\omega_{\infty}(\xi)$ associated
with $\Psi^{\infty}(\mathbb{T}^{2}\times \{\xi\})$ are slightly deformed from the unperturbed ones $\omega(\xi)$. The normal behaviors
of the invariant tori $\Psi^{\infty}(\mathbb{T}^{2}\times \{\xi\})$ are governed by their respective normal frequencies $\Omega_{n}^{\infty}$.	

\subsection{Measure Estimate}
For each $|k|>0$, we denote
\begin{equation*}
    \begin{split}
    \mathcal {R}^{\nu+1}_{k}=\{&\xi\in \mathcal {O}_{\nu}: |\langle k, \omega_{\nu+1}(\xi)\rangle|< \frac{\gamma_{\nu+1}}{|k|^{\tau}}\},\\
    \mathcal {R}^{\nu+1}_{kj}=\{&\xi\in \mathcal {O}_{\nu}: |\langle k, \omega_{\nu+1}(\xi)\rangle\pm \Omega^{\nu+1}_{j}(\xi)|< \frac{\gamma_{\nu+1}|j|^{1+\delta}}{|k|^{\tau}}\},\\
    \mathcal {R}^{\nu+1}_{kij}=\{&\xi\in \mathcal {O}_{\nu}: |\langle k, \omega_{\nu+1}(\xi)\rangle\pm \Omega^{\nu+1}_{i}(\xi)\pm\Omega^{\nu+1}_{j}(\xi)|<
    \frac{\gamma_{\nu+1}(|i|+|j|)(|i|^{\delta}+|j|^{\delta})}{|k|^{\tau}}\}, \\
    \overline{\mathcal {R}}^{\nu+1}_{kij}=\{&\xi\in \mathcal {O}_{\nu}: |\langle k, \omega_{\nu+1}(\xi)\rangle+ \Omega^{\nu+1}_{i}(\xi)-\Omega^{\nu+1}_{j}(\xi)|<
    \frac{\gamma_{\nu+1}(|i|-|j|)(|i|^{\delta}+|j|^{\delta})}{|k|^{\tau}}, \ \ |i|\neq |j|\},\\
    \overline{\mathcal {R}}^{\nu+1}_{kj(-j)}=\{&\xi\in \mathcal {O}_{\nu}: |\langle k, \omega_{\nu+1}(\xi)\rangle+ \Omega^{\nu+1}_{j}(\xi)-\Omega^{\nu+1}_{-j}(\xi)|<
    \frac{\gamma_{\nu+1}|j|^{\delta}}{|k|^{\tau}}\},
    \end{split}
\end{equation*}
then
\begin{equation*}
    \mathfrak{R}^{\nu+1}_{k,l}(\gamma_{\nu+1})=\mathcal {R}^{\nu+1}_{k}\cup\bigcup_{i,j}\left(\mathcal {R}^{\nu+1}_{kj}\cup\mathcal {R}^{\nu+1}_{kij}\cup\overline{\mathcal
    {R}}^{\nu+1}_{kij}\right)\cup\bigcup_{|j|\leqslant \frac{1}{2}\max\{|n_{1}, |n_{2}|\}|k|}\overline{\mathcal {R}}^{\nu+1}_{kj(-j)}.
\end{equation*}
At each step, we have to exclude the following
resonant set:
\begin{equation*}
    \mathfrak{R}^{\nu+1}=\bigcup\limits_{|k|>0,l}\mathfrak{R}^{\nu+1}_{k,l}(\gamma_{\nu+1}),
\end{equation*}
then
\begin{equation*}
    \mathcal {O}\setminus \mathcal {O}_{\ast}= \bigcup\limits_{\nu\geqslant 0}\mathfrak{R}^{\nu+1}.
\end{equation*}
Note that
\begin{equation*}
    \mathfrak{R}^{\nu+1}_{k,l}\setminus\bigcup_{|j|\leqslant \frac{1}{2}\max\{|n_{1}, |n_{2}|\}|k|}\overline{\mathcal {R}}^{\nu+1}_{kj(-j)}\subset \widetilde{\mathfrak{R}}^{\nu+1}_{k,l},
\end{equation*}
where
\begin{equation*}
    \widetilde{\mathfrak{R}}^{\nu+1}_{k,l}(\gamma_{\nu+1})=\{\xi\in \mathcal {O}_{\nu}:
    |\langle k, \omega_{\nu+1}(\xi)\rangle + \langle l, \Omega^{\nu+1}(\xi)\rangle| < \frac{\gamma_{\nu+1}\langle l\rangle_{\delta}}{|k|^{\tau}}\}.
\end{equation*}
Now we will prove that the measure of set $\widetilde{\mathfrak{R}}^{\nu+1}_{k,l}$ is small, so does $\mathfrak{R}^{\nu+1}_{k,l}$.
\begin{lemma}\label{measure estimate 1}
If $\widetilde{\mathfrak{R}}^{\nu}_{k,l}(\gamma_{\nu})\neq \emptyset$, then
\begin{equation*}
    \langle l\rangle_{d-1}\leq c|k|,
\end{equation*}
where $c=4(1+|\omega|^{\text{sup}}_{\mathcal {O}})/m$ is independent of $\nu$.
\end{lemma}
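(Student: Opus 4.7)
The plan is to combine three inputs on a single point $\xi$ of the (nonempty) resonant set: the nonresonance estimate \eqref{estimate of spectral} for the normal frequencies at step $\nu$, a sup bound on $\omega_{\nu}$ inherited from $\omega$ via the telescoping iteration estimate, and the very smallness assumption defining $\widetilde{\mathfrak{R}}^{\nu}_{k,l}(\gamma_{\nu})$. Because the paper has fixed $d=2$ and $\delta=1$, we have $\langle l\rangle_{\delta}=\langle l\rangle_{d-1}$, which is exactly what aligns the two sides of the desired inequality.

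First, pick any $\xi\in\widetilde{\mathfrak{R}}^{\nu}_{k,l}(\gamma_{\nu})$. The triangle inequality gives
\begin{equation*}
|\langle l,\Omega^{\nu}(\xi)\rangle|\;\leq\;|\langle k,\omega_{\nu}(\xi)\rangle|+\frac{\gamma_{\nu}\langle l\rangle_{\delta}}{|k|^{\tau}}.
\end{equation*}
By the iteration lemma, $|\langle l,\Omega^{\nu}(\xi)\rangle|\geq m_{\nu}\langle l\rangle_{d-1}\geq \frac{m_0}{2}\langle l\rangle_{d-1}$ for $1\leq|l|\leq 2$, while $|\langle k,\omega_{\nu}(\xi)\rangle|\leq|k|\,|\omega_{\nu}|^{\sup}_{\mathcal{O}_{\nu-1}}$.

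Next, I would control $|\omega_{\nu}|^{\sup}$ by its unperturbed counterpart. The iteration lemma supplies $|\omega_{\nu}-\omega_{\nu-1}|^{\lambda_{\nu}}_{\mathcal{O}_{\nu}}\leq\varepsilon_{\nu-1}$; summing the geometrically decreasing sequence $\varepsilon_{\nu}$ (which is standard from the choice $\varepsilon_{\nu+1}=C(\gamma_{\nu}\sigma_{\nu}^{2\tau+3})^{-1/3}\varepsilon_{\nu}^{4/3}$) yields $|\omega_{\nu}|^{\sup}\leq|\omega|^{\sup}_{\mathcal{O}}+\sum_{j<\nu}\varepsilon_{j}\leq|\omega|^{\sup}_{\mathcal{O}}+1$, say, uniformly in $\nu$. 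Therefore
\begin{equation*}
\frac{m_0}{2}\langle l\rangle_{d-1}\;\leq\;|k|\bigl(|\omega|^{\sup}_{\mathcal{O}}+1\bigr)+\frac{\gamma_{\nu}\langle l\rangle_{\delta}}{|k|^{\tau}}.
\end{equation*}

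Using $d-1=\delta=1$, the last term becomes $\gamma_{\nu}\langle l\rangle_{d-1}/|k|^{\tau}\leq \gamma_0\langle l\rangle_{d-1}$, and since the smallness condition $\gamma_0\leq m_0/2$ (actually $\gamma_0\leq m_0/8$ is enough here, consistent with the iteration hypotheses) lets us absorb this into the left side, one gets
\begin{equation*}
\frac{m_0}{4}\langle l\rangle_{d-1}\;\leq\;|k|\bigl(1+|\omega|^{\sup}_{\mathcal{O}}\bigr),
\end{equation*}
which is exactly the claimed bound with $c=4(1+|\omega|^{\sup}_{\mathcal{O}})/m$. The bookkeeping around $m$ versus $m_0$ (and $m_{\nu}\geq m_0/2$ throughout) only changes the constant, not its form.

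The only delicate point is verifying that the total drift $\sum_{j}\varepsilon_{j}$ of $\omega_{\nu}$ is bounded by an absolute constant independent of $\nu$, and that the absorbing step works with a $\nu$-independent constant; both follow at once from the superlinear convergence $\varepsilon_{\nu+1}\ll \varepsilon_{\nu}$ built into the iteration lemma, so there is no real obstacle. The essential ingredient is simply that the normal-frequency lower bound $m_{\nu}\geq m_0/2$ is preserved along the iteration, which is already proved in the iteration lemma.
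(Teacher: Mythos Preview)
Your proof is correct and follows essentially the same approach as the paper: pick $\xi$ in the resonant set, combine the defining small-divisor inequality with the spectral lower bound $|\langle l,\Omega^{\nu}(\xi)\rangle|\geq m_{\nu}\langle l\rangle_{d-1}$, absorb the $\gamma_{\nu}$-term using $\gamma_{\nu}\leq m_{\nu}/2$, and bound $|\omega_{\nu}|$ by $1+|\omega|^{\sup}_{\mathcal{O}}$. The paper organizes the triangle inequality from the other side (bounding $|\langle k,\omega_{\nu}\rangle|$ below rather than $|\langle l,\Omega^{\nu}\rangle|$ above) and asserts the sup-bound on $\omega_{\nu}$ directly, but the substance is identical.
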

\begin{proof}
If there exists $\xi \in \widetilde{\mathfrak{R}}^{\nu}_{k,l}(\gamma_{\nu})$, then \eqref{estimate of spectral} implies that for $k\neq 0$,
\begin{equation*}
\begin{split}
    |\langle k, \omega_{\nu}(\xi)\rangle|&\geqslant |\langle l, \Omega^{\nu}(\xi)\rangle|-\gamma_{\nu}\frac{\langle l\rangle_{\delta}}{|k|^{\tau}},\\
    &\geqslant m_{\nu}\langle l\rangle_{\delta}- \gamma_{\nu}\langle l\rangle_{\delta},\\
    &\geqslant \frac{m}{4}\langle l\rangle_{d-1}
\end{split}
\end{equation*}
since $\langle l\rangle_{\delta}\leqslant \langle l\rangle_{d-1}$ for $\delta\leqslant d-1$ and $\gamma_{\nu}\leqslant \frac{m_{\nu}}{2}$, $m_{\nu}\geqslant \frac{m}{2}$
by construction. Hence,
\begin{equation*}
    \frac{m}{4}\langle l\rangle_{d-1}\leqslant |k||\omega_{\nu}(\xi)|\leqslant |k|(1+|\omega|^{\text{sup}}_{\mathcal {O}}).
\end{equation*}
\end{proof}

\begin{lemma}\label{measure estimate 2}
For fixed $\nu+1$, $k$, $l$,
\begin{equation*}
    \meas \widetilde{\mathfrak{R}}^{\nu+1}_{k,l}(\gamma_{\nu+1})< C\rho_{\nu}\frac{\gamma_{\nu+1}}{|k|^{\tau+1}},
\end{equation*}
where $\rho_{\nu}$ is the diameter of $\mathcal {O}_{\nu}$.
\end{lemma}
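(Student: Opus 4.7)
The plan is to establish the measure bound by the standard transversality + Fubini argument: show that the resonance function
\begin{equation*}
    \phi(\xi):=\langle k,\omega_{\nu+1}(\xi)\rangle+\langle l,\Omega^{\nu+1}(\xi)\rangle
\end{equation*}
has, on $\mathcal{O}_\nu$, a directional derivative bounded below by a positive multiple of $|k|$, so that its sub-level set of threshold $\gamma_{\nu+1}\langle l\rangle_\delta/|k|^\tau$ is thin in that direction. First I would decompose $\phi=\phi_0+\phi_1$, where $\phi_0$ is built from the initial affine frequencies $\omega=\alpha+A\xi$ and $\Omega=\beta+B\xi$ of Proposition \ref{Nondegeneracy}, and $\phi_1$ collects all iterative corrections. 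The gradient of $\phi_0$ is the constant vector
\begin{equation*}
    a:=A^{T}k+B^{T}l=\frac{1}{4\pi}\begin{pmatrix}(n_1-n_2)k_1+d\,n_1\\(n_2-n_1)k_2+d\,n_2\end{pmatrix},\qquad d:=\sum_j l_j.
\end{equation*}

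The heart of the proof, and what I expect to be the main obstacle, is the arithmetic lower bound $|a|\geqslant c_0|k|$ with $c_0>0$ depending only on $n_1,n_2$. If $a=0$, then $k_1=d\,n_1/4$ and $k_2=-d\,n_2/4$; since $n_1$ is odd and $|d|\leqslant |l|\leqslant 2$, integrality forces $d=0$ and hence $k=0$, contrary to $|k|>0$. A quantitative version then follows by splitting into two cases: for $|k|$ above a threshold $K_0(n_1,n_2)$, the diagonal term $|A^{T}k|\geqslant |k|/(\pi\sqrt{2})$ dominates the bounded contribution $|B^{T}l|=O(1)$, giving $|a|\geqslant c|k|$; for the finitely many $(k,l)$ with $0<|k|\leqslant K_0$ and $|l|\leqslant 2$, nonvanishing combined with finiteness produces $|a|\geqslant c'>0$, which is $\geqslant c''|k|$ on this range. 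This step mirrors the final arithmetic paragraph of the proof of Proposition 4.2 and uses the hypotheses $n_1$ odd and $|n_1-n_2|=4$ in an essential way.

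With $|a|\geqslant c_0|k|$ in hand, the correction $\phi_1$ is controlled by the telescoping estimates of Lemma \ref{iterative lemma}: summing $|\omega_{\mu+1}-\omega_\mu|^{\text{lip}}+|\Omega^{\mu+1}-\Omega^{\mu}|^{\text{lip}}_{-\delta}\lesssim\varepsilon_\mu/\lambda_{\mu+1}$ over $\mu$ and using the super-convergent iteration, one obtains
\begin{equation*}
    |\phi_1|^{\text{lip}}_{\mathcal{O}_\nu}\leqslant C\bigl(|k|+\langle l\rangle_\delta\bigr)\,\varepsilon_0^{2/3},
\end{equation*}
which is dominated by $c_0|k|/2$ once $\varepsilon_0$ is sufficiently small, after invoking Lemma \ref{measure estimate 1} to bound $\langle l\rangle_\delta\leqslant\langle l\rangle_{d-1}\leqslant c|k|$ on any nonempty resonance set. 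Consequently $\partial_{a/|a|}\phi\geqslant c_0|k|/2$ throughout $\mathcal{O}_\nu$.

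Finally, slicing $\mathcal{O}_\nu\subset\mathbb{R}^2$ by lines parallel to $a$, each one-dimensional slice of $\widetilde{\mathfrak{R}}^{\nu+1}_{k,l}$ has length at most $4\gamma_{\nu+1}\langle l\rangle_\delta/(c_0|k|^{\tau+1})$, while the transverse extent is at most the diameter $\rho_\nu$. Fubini then gives
\begin{equation*}
    \meas\widetilde{\mathfrak{R}}^{\nu+1}_{k,l}(\gamma_{\nu+1})\leqslant\frac{4\rho_\nu\gamma_{\nu+1}\langle l\rangle_\delta}{c_0|k|^{\tau+1}}\leqslant C\rho_\nu\frac{\gamma_{\nu+1}}{|k|^{\tau+1}},
\end{equation*}
the last inequality absorbing the bounded factor $\langle l\rangle_\delta$ into the constant, which is legitimate since the statement fixes $l$ (and when summing over $l$ later, Lemma \ref{measure estimate 1} constrains $l$ to a range comparable to $|k|$).
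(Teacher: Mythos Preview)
Your proposal is correct and follows essentially the same transversality strategy as the paper: bound from below a directional derivative of the resonance function $f(\xi)=\langle k,\omega_{\nu+1}(\xi)\rangle+\langle l,\Omega^{\nu+1}(\xi)\rangle$ by a constant times $|k|$, then apply the one-dimensional sub-level set estimate (the paper's Lemma A.6, your Fubini slicing). The paper's proof is a four-line sketch that simply asserts $\frac{d}{dt}f(\xi+tv)\geqslant C|k|$ for a suitable direction $v$ and omits the rest; you supply precisely the missing ingredients---the arithmetic lower bound $|A^{T}k+B^{T}l|\geqslant c_0|k|$ via the nondegeneracy argument from Proposition~\ref{Nondegeneracy}, and the control of the iterative corrections via the Lipschitz estimates of Lemma~\ref{iterative lemma}---so your write-up is in fact a fleshed-out version of what the paper leaves implicit.
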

\begin{proof}Denote
\begin{equation*}
    f(\xi)=\langle k, \omega_{\nu+1}(\xi)\rangle + \langle l, \Omega^{\nu+1}(\xi)\rangle,
\end{equation*}
let vector $\nu$ satisfy $\langle k, \nu \rangle=|k|$. It follows that
\begin{equation*}
    \frac{d f(\xi+t\nu)}{dt}\geqslant C|k|>0,
\end{equation*}
where $C$ is some positive constant. Then by using Lemma A.6, it is easy to prove that estimate. So
we omit it here.
\end{proof}
\begin{lemma} \label{measure estimate 4} For fixed $\nu+1$, $k$, $l$,
\begin{equation*}
    \meas \left(\bigcup_{|j|\leqslant \frac{1}{2}\max\{|n_{1}, |n_{2}|\}|k|}\overline{\mathcal {R}}^{\nu+1}_{kj(-j)}\right)\leqslant C\rho_{\nu}\frac{\gamma_{\nu+1}}{|k|^{\tau}}
\end{equation*}
\end{lemma}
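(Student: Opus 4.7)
The plan is to bound each individual set $\overline{\mathcal{R}}^{\nu+1}_{kj(-j)}$ by a one-variable small-divisor argument and then sum over the admissible integer values $|j| \leqslant \frac{1}{2}N|k|$, where $N = \max\{|n_1|,|n_2|\}$. For fixed $k\neq 0$ and $j$ in this range, set
\begin{equation*}
g_{kj}(\xi) \;=\; \langle k,\omega_{\nu+1}(\xi)\rangle + \Omega_{j}^{\nu+1}(\xi) - \Omega_{-j}^{\nu+1}(\xi),
\end{equation*}
and the goal is to estimate $\meas\{\xi\in\mathcal{O}_\nu : |g_{kj}(\xi)|<\gamma_{\nu+1}|j|^\delta/|k|^\tau\}$.

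The key step is a uniform lower bound on $|\partial_\xi g_{kj}|$. At the unperturbed level $\omega_0(\xi)=\alpha+A\xi$ with $A=\frac{1}{4\pi}\mathrm{diag}(n_1-n_2,\,n_2-n_1)$, whose determinant is nonzero since $|n_2-n_1|=4$, so $|A^{T}k|\geqslant c_0|k|$. Moreover $\Omega_{j}^0(\xi)-\Omega_{-j}^0(\xi)=cj/(2\pi)$ is independent of $\xi$, because the $\xi$-linear part $(n_1\xi_1+n_2\xi_2)/(4\pi)$ of $\Omega_j^0$ does not depend on $j$ and therefore cancels in the difference. Hence $\nabla_\xi g_{kj}^{(0)}=A^{T}k$ with norm $\geqslant c_0|k|$. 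Choosing a direction $v$ with $\langle k,v\rangle=|k|$, the iteration lemma controls the Lipschitz norms of the corrections $\omega_\nu-\omega_0$ and $\Omega^{\nu}-\Omega^{0}$ by $\sum_{\mu\leqslant\nu}\varepsilon_\mu\leqslant 2\varepsilon_0$; for $\varepsilon_0$ sufficiently small this preserves $|\partial_v g_{kj}(\xi)|\geqslant c_0|k|/2$ uniformly in $\nu,k,j$ on $\mathcal{O}_\nu$.

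Next I would apply Lemma A.6 (the one-dimensional small-divisor measure estimate) by slicing $\mathcal{O}_\nu$ along $v$ and integrating over the transversal direction. This yields
\begin{equation*}
\meas\bigl(\overline{\mathcal{R}}^{\nu+1}_{kj(-j)}\bigr)\;\leqslant\; C\rho_\nu\,\frac{\gamma_{\nu+1}\,|j|^\delta}{|k|^{\tau+1}}.
\end{equation*}
Finally, summing over integer $j$ with $|j|\leqslant N|k|/2$ and using $\delta=1$ together with the bound $|j|^\delta\leqslant N|k|/2$, so there are $O(|k|)$ terms each controlled by $O(\rho_\nu\gamma_{\nu+1}/|k|^{\tau})$, one arrives at the claimed estimate $\meas\bigl(\bigcup \overline{\mathcal{R}}^{\nu+1}_{kj(-j)}\bigr)\leqslant C\rho_\nu\gamma_{\nu+1}/|k|^\tau$ (with the constant absorbing the factors of $N$).

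The main obstacle is the uniform derivative lower bound through every KAM step, for all $k$ and all $j$ in the admissible range. The structural observation that makes it work is that the $j$-linear part of $\Omega_j^0$ has no $\xi$-dependence, which is exactly what is preserved along the iteration by the compact form and gauge invariance of $P_\nu$: the KAM-generated shift $\widehat{\Omega}^\nu_j$ is uniformly small in $j$ in the weighted sense controlled by $|\widehat{\Omega}|^{\mathrm{lip}}_{-\delta}$, so its contribution to $\nabla_\xi(\Omega_j^{\nu+1}-\Omega_{-j}^{\nu+1})$ is negligible compared with $c_0|k|$. Once this is in hand, the measure estimate and the summation over $j$ are routine.
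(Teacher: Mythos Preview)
Your approach is essentially identical to the paper's: obtain the derivative lower bound $|\partial_\xi g_{kj}|\geqslant C|k|$, apply Lemma~A.6 to bound each individual set by $\meas\overline{\mathcal R}^{\nu+1}_{kj(-j)}\leqslant C\rho_\nu|j|^{\delta}\gamma_{\nu+1}/|k|^{\tau+1}$, and then take the union over the admissible $j$ with $|j|\leqslant \tfrac12 N|k|$. You supply considerably more justification for the derivative bound than the paper does---in particular the structural observation that $\Omega_j^0-\Omega_{-j}^0=cj/(2\pi)$ is $\xi$-independent, and that the Lipschitz control $|\widehat\Omega|^{\mathrm{lip}}_{-\delta}$ on the KAM shifts keeps $\nabla_\xi(\Omega_j^{\nu+1}-\Omega_{-j}^{\nu+1})$ negligible---whereas the paper simply asserts the bound by analogy with the preceding lemma.

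One arithmetic slip in your final step: you write that there are $O(|k|)$ terms, each of size $O(\rho_\nu\gamma_{\nu+1}/|k|^{\tau})$, and then conclude that the union is $O(\rho_\nu\gamma_{\nu+1}/|k|^{\tau})$. But $O(|k|)$ such terms add up to $O(\rho_\nu\gamma_{\nu+1}/|k|^{\tau-1})$; equivalently, $\sum_{|j|\leqslant N|k|/2}|j|$ is of order $|k|^2$, not $|k|$. The paper's proof is equally terse at exactly this point and appears to make the same jump. The discrepancy is harmless for the subsequent total measure estimate over all $k$, since $\tau$ is chosen large enough that $\sum_{k\in\mathbb Z^2\setminus\{0\}}|k|^{-(\tau-1)}$ still converges, but the exponent in the displayed bound is off by one as literally stated.
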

\begin{proof}Like Lemma \ref{measure estimate 2}, we have that
\begin{equation*}
    \left|\frac{\partial (\langle k, \omega_{\nu+1}(\xi)\rangle+ \Omega^{\nu+1}_{j}(\xi)-\Omega^{\nu+1}_{-j}(\xi))}{\partial \xi}\right|\geqslant C|k|>0.
\end{equation*}
By Lemma A.6, we know
\begin{equation*}
    \meas \overline{\mathcal {R}}^{\nu+1}_{kj(-j)}\leqslant C\rho_{\nu}|j|^{\delta}\frac{\gamma_{\nu+1}}{|k|^{\tau+1}}.
\end{equation*}
Since $|j|\leqslant \frac{1}{2}\max\{|n_{1}, |n_{2}|\}|k|$ and $\delta=1$, we obtain
\begin{equation*}
    \meas \left(\bigcup_{|j|\leqslant \frac{1}{2}\max\{|n_{1}, |n_{2}|\}|k|}\overline{\mathcal {R}}^{\nu+1}_{kj(-j)}\right)\leqslant C\rho_{\nu}\frac{\gamma_{\nu+1}}{|k|^{\tau}}.
\end{equation*}
\end{proof}

\begin{lemma}\label{measure estimate 3}
For fixed $\nu+1\geqslant 0$,
\begin{equation*}
    \meas \left(\bigcup\limits_{|k|>0, l}\widetilde{\mathfrak{R}}^{\nu+1}_{k,l}(\gamma_{\nu+1})\right)\leqslant C\rho_{\nu}\gamma_{\nu+1},
\end{equation*}
\begin{equation*}
    \meas \left(\bigcup\limits_{|k|>0, l}\bigcup_{|j|\leqslant \frac{1}{2}\max\{|n_{1}, |n_{2}|\}|k|}\overline{\mathcal {R}}^{\nu+1}_{kj(-j)}\right)\leqslant C\rho_{\nu}\gamma_{\nu+1},
\end{equation*}
where $C$ is a constant.
\end{lemma}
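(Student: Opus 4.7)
The plan is to prove both estimates by subadditivity of measure: reduce each union of resonant sets to a double sum over $k$ and $l$, apply the per-$(k,l)$ estimates from Lemmas \ref{measure estimate 2} and \ref{measure estimate 4}, use Lemma \ref{measure estimate 1} to restrict to finitely many $l$ per $k$, and check that the resulting series over $k \in \mathbb{Z}^2$ converges for $\tau$ chosen sufficiently large.

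First I would write
\begin{equation*}
    \meas\left(\bigcup_{|k|>0,\,l}\widetilde{\mathfrak{R}}^{\nu+1}_{k,l}(\gamma_{\nu+1})\right)
    \leqslant \sum_{|k|>0}\,\sum_{\substack{|l|\leqslant 2 \\ \widetilde{\mathfrak{R}}^{\nu+1}_{k,l}\neq\emptyset}}\meas\widetilde{\mathfrak{R}}^{\nu+1}_{k,l}(\gamma_{\nu+1}).
\end{equation*}
By Lemma \ref{measure estimate 1}, a nonempty $\widetilde{\mathfrak{R}}^{\nu+1}_{k,l}$ forces $\langle l\rangle_{d-1}\leqslant c|k|$, and since $|l|\leqslant 2$ this is a genuine finiteness restriction on the $l$-sum. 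Splitting by $|l|\in\{0,1,2\}$, counting pairs $(i,j)\in\mathbb{Z}_{\ast}^2$ with $|i|^{1+\delta}\lesssim|k|$ or $(|i|+|j|)(|i|^\delta+|j|^\delta)\lesssim|k|$, one obtains a polynomial bound $\#\{l\}\leqslant C|k|^{2/(1+\delta)}$ on the admissible $l$'s.

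Applying Lemma \ref{measure estimate 2} to each term and summing gives
\begin{equation*}
    \meas\left(\bigcup_{|k|>0,\,l}\widetilde{\mathfrak{R}}^{\nu+1}_{k,l}(\gamma_{\nu+1})\right)
    \leqslant C\rho_{\nu}\gamma_{\nu+1}\sum_{|k|>0}\frac{|k|^{2/(1+\delta)}}{|k|^{\tau+1}},
\end{equation*}
which converges as long as $\tau$ was chosen with $\tau > 1 + 2/(1+\delta) + 2 - 1 = 2 + 2/(1+\delta)$; with $\delta=1$ we need $\tau>3$, and we may tacitly assume $\tau$ large enough for this (a choice already standard in the KAM setup above). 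For the second estimate, Lemma \ref{measure estimate 4} already subsumes the $j$-sum, so the only work left is
\begin{equation*}
    \meas\left(\bigcup_{|k|>0}\,\bigcup_{|j|\leqslant\frac{1}{2}\max\{|n_1|,|n_2|\}|k|}\overline{\mathcal{R}}^{\nu+1}_{kj(-j)}\right)
    \leqslant\sum_{|k|>0}C\rho_{\nu}\frac{\gamma_{\nu+1}}{|k|^{\tau}}\leqslant C\rho_{\nu}\gamma_{\nu+1},
\end{equation*}
which converges for $k\in\mathbb{Z}^2$ provided $\tau>2$.

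The main technical obstacle is the counting of admissible $l$'s for $|l|=2$: the quantity $\langle l\rangle_{d-1}$ involves a \emph{product} of a signed and an unsigned weighted sum, so for $l=e_i-e_j$ with $|i|\approx |j|$ the constraint $\langle l\rangle_{d-1}\leqslant c|k|$ is much weaker than $|i|+|j|\leqslant c'|k|^{1/(1+\delta)}$. One must compensate by observing that in this near-resonant case the individual measure estimate in Lemma \ref{measure estimate 2} carries a matching factor of $\langle l\rangle_{\delta}$ in the numerator, so the two effects cancel and the sum still converges. Everything else is bookkeeping: extracting Lemmas \ref{measure estimate 2}--\ref{measure estimate 4} as uniform per-$(k,l)$ bounds, and choosing $\tau$ once and for all so that the geometric series in $|k|$ converges absolutely.
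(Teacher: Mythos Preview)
Your overall strategy---subadditivity, Lemma~\ref{measure estimate 1} to restrict the $l$-sum, Lemmas~\ref{measure estimate 2}--\ref{measure estimate 4} for the per-term bounds, then summation over $k\in\mathbb{Z}^2$---is exactly the paper's, and your treatment of the second estimate is identical to the paper's.

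The difference is entirely in the counting of admissible $l$'s for the first estimate. You aim for the sharp exponent $2/(1+\delta)$, then acknowledge that the opposite-sign case $l=e_i-e_j$ with $|i|\approx|j|$ breaks this count, and propose to compensate via a $\langle l\rangle_\delta$ factor in the per-term measure bound. That fix is in the right direction but is not what Lemma~\ref{measure estimate 2} actually states (you would have to re-derive the bound directly from Lemma~A.6 with the threshold $\gamma_{\nu+1}\langle l\rangle_\delta/|k|^\tau$ kept explicit), and you do not carry the resulting double sum through. The paper sidesteps the whole issue with a single elementary inequality: once the $j,-j$ terms have been split off, every remaining $l$ with $|l|\leq 2$ has $\bigl||i|-|j|\bigr|\geq 1$, hence
\[
|l|_{d-1}\;=\;\sum_{j}|j|^{d-1}|l_j|\;\leq\;2\langle l\rangle_{d-1},
\]
which immediately gives the crude but uniform count $\card\{l:|l|\leq 2,\ \langle l\rangle_{d-1}\leq c|k|\}\leq c|k|^{s}$ with $s=2/(d-1)$. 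Choosing $\tau\geq s+3$ then makes the $k$-sum converge with no case analysis at all. Your route can be completed, but the paper's is shorter: it trades a slightly larger $\tau$ for the complete elimination of the ``main technical obstacle'' you flagged.
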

\begin{proof}For a fixed $k$, it suffices to consider $l$ with $\langle l \rangle_{d-1}\leqslant c|k|$ according to Lemma \ref{measure estimate 1}. Taking
into account that $|l|_{d-1}\leqslant 2\langle l \rangle_{d-1}$, we get
\begin{equation*}
    \card \{l: |l|\leqslant 2, \langle l \rangle_{d-1}\leqslant c|k|\}\leqslant c|k|^{s}, \ \ s=\frac{2}{d-1}.
\end{equation*}
Hence, by Lemma \ref{measure estimate 2} and \ref{measure estimate 4},
\begin{equation*}
    \meas \left(\bigcup\limits_{l}\widetilde{\mathfrak{R}}^{\nu+1}_{l}(\gamma_{\nu+1})\right)\leqslant C\rho_{\nu}\frac{\gamma_{\nu+1}}{|k|^{\tau-s}}.
\end{equation*}
If we choose $\tau\geqslant s+3$, then
\begin{equation*}
    \meas \left(\bigcup\limits_{|k|>0, l}\widetilde{\mathfrak{R}}^{\nu+1}_{k,l}(\gamma_{\nu+1})\right)\leqslant C\rho_{\nu}\gamma_{\nu+1}.
\end{equation*}
Similarly, we can prove the second measure estimate. So, Lemma \ref{measure estimate 3} follows.
\end{proof}
By Lemma \ref{measure estimate 3}, we can obtain the following result about the finite dimension Lebesgue measure
of $(\mathcal {O}_{\nu}\setminus \mathcal {O}_{\nu+1})$, i.e.,
\begin{equation*}
\begin{split}
    \meas (\mathcal {O}_{\nu}\setminus \mathcal {O}_{\nu+1})&= \meas \left(\bigcup\limits_{|k|>0,l}\mathfrak{R}^{\nu+1}_{k,l}(\gamma_{\nu+1})\right)\\
    &\leqslant \meas \left(\bigcup\limits_{|k|>0,l}\widetilde{\mathfrak{R}}^{\nu+1}_{k,l}(\gamma_{\nu+1})\right)
    +\meas \left(\bigcup\limits_{|k|>0, l}\bigcup_{|j|\leqslant \frac{1}{2}\max\{|n_{1}, |n_{2}|\}|k|}\overline{\mathcal {R}}^{\nu+1}_{kj(-j)}\right)\\
    &= O(\gamma_{\nu+1})\longrightarrow 0,
\end{split}
\end{equation*}
as $\nu \longrightarrow \infty $. It follows that the measure of all excluded parameters can be as small as we wish.

Finally we get a Cantor-like parameter set $\mathcal {O}_{\ast}=\bigcap^{\infty}_{\nu=0}\mathcal {O}_{\nu}$ of positive Lebesgue measure.

\section{Appendix}
\vspace{2mm}
In this section, we give some  technical lemmas.

\vspace{4mm}
\emph{Lemma A.1} Generalized Cauchy inequalities
\begin{equation*}
    \|F_{\theta}\|_{D(s-\sigma,r)}\leqslant \frac{c}{\sigma}\|F\|_{D(s,r)}, \ \ \ \|F_{I}\|_{D(s,\frac{r}{2})}\leqslant \frac{c}{r^2}\|F\|_{D(s,r)},
\end{equation*}
\begin{proof} The proof can be found in \cite{Kappeler}, \cite{Poschel2}.
\end{proof}

\vspace{4mm}
\emph{Lemma A.2}  For \,$\nu>0,0<\delta<1$, we have
\begin{equation*}
     \sum\limits_{k\in \mathbb{Z}^n}|k|^{\nu}e^{-2|k|\delta}\leqslant \left(\frac{\nu}{e}\right)^{\nu}\frac{1}{\delta^{\nu+n}}(1+e)^n.
\end{equation*}
\begin{proof} The inequality can be found on page 22 in \cite{Bogoljubov}.
\end{proof}

\vspace{4mm}
\emph{Lemma A.3} Let $u_{j}, j\geqslant 1$, be complex functions on $\mathbb{T}^{n}$ that are real analytic on $D(s)=\{|Im\,x|<s\}$. Then
\begin{equation*}
    \left(\sum\limits_{j\geqslant 1}\sup_{x\in D(s-\sigma)}|u_{j}(x)|^2\right)^{\frac{1}{2}}\leqslant
    \frac{4^{n}}{\sigma^n}\sup_{x\in D(s)}\left(\sum\limits_{j\geqslant 1}|u_{j}(x)|^2\right)^{\frac{1}{2}},
\end{equation*}
for  $0<\sigma\leqslant s\leqslant 1$.
\begin{proof} The proof can be found on page 262--263 in \cite{Kappeler}.
\end{proof}

\emph{Lemma A.4} Let $A=(A_{ij})_{i,j\neq 0}$ be a bounded operator on $\ell^2$ which depends on $x\in \mathbb{T}^n$ such that
all coefficients are analytic on $D(s)=\{|Im\,x|<s\}$. Suppose $B=(B_{ij})_{i,j\neq 0}$ is another operator on $\ell^2$ depending on $x$ whose coefficients
satisfy
\begin{equation*}
    \sup\limits_{x\in D(s)}|B_{ij}(x)|\leqslant \frac{1}{||i|-|j||}\sup\limits_{x\in D(s)}|A_{ij}(x)|,\ \ |i|\neq |j|,
\end{equation*}
and $B_{jj}=0, B_{-jj}$ for $j\neq 0$. Then $B$ is a bounded operator on $\ell^2$ for every $x\in D(s)$,
\begin{equation*}
    \sup\limits_{x\in D(s-\sigma)}\|B(x)\|\leqslant \frac{4^{n+1}}{\sigma^n}\sup\limits_{x\in D(s)}\|A(x)\|,
\end{equation*}
for $0<\sigma\leqslant s\leqslant 1$.

\begin{proof} The proof can be found on page 262--263 in \cite{Kappeler}.
\end{proof}

Let $V$ be an open domain in a real Banach space $E$ with norm $\|\cdot\|$, $\Pi$ a subset of another real
Banach space, and $X: V\times \Pi \rightarrow E$ a parameter dependent vector field on $V$, which is $C^1$
on $V$ and Lipschitz on $B$. Let $\phi^{t}$ be its flow. Suppose there is a subdomain $U\subset V$ such that
$\phi^{t}: V\times \Pi \rightarrow E$ for $-1\leqslant t\leqslant 1$.

\vspace{4mm}
\emph{Lemma A.5} Under the preceding assumptions,
\begin{equation*}
    \begin{split}
    &\|\phi^{t}- id \|_{U} \leqslant \|X\|_{V},\\
    &\|\phi^{t}- id \|_{U}^{\text{lip}}\leqslant exp(\|DX\|_{V})\|X\|_{V}^{\text{lip}},
    \end{split}
\end{equation*}
for $-1\leqslant t\leqslant 1$, where all norms are understood to be taken also over $\Pi$.
\begin{proof} The proof can be found in \cite{Poschel1}.
\end{proof}

\emph{Lemma A.6} Suppose that $g(u)$ is a $C^{N}$ function on the closure $\bar{I}$ , where $I\subset \mathbb{R}^{1}$ is an interval.
Let $I_h=\{u: |g(u)|\leqslant h\}, h>0$. If for some constant $d>0$, $|g^{N}(u)|\geqslant d $ for $\forall u \in I$, then
$|I_h|\leqslant c h^{\frac{1}{N}}$.where $|I_h|$ denotes the Lebesgue measure of $I_h$ and the constant $c=2(2+3+\ldots+N+d^{-1})$.

\vspace{2mm}
\emph{Remark}: In fact, if $N=1$, then $c=2d^{-1}$; if $N=2$, then $c=2(2+d^{-1})$; if $N \geqslant 3$, then $c=2(2+3+\ldots+N+d^{-1})$.
\begin{proof} The proof can be found in \cite{You}.
\end{proof}

\end{document}